\newtheorem{definition}{Definition}[section]
\newtheorem{theorem}[definition]{Theorem}
\newtheorem{lemma}[definition]{Lemma}
\newtheorem{proposition}[definition]{Proposition}
\newtheorem{corollary}[definition]{Corollary}
\newtheorem{condition}{Assumption}
\newtheorem{example}[definition]{Example}
\numberwithin{equation}{section}
\begin{document}
\title{Large deviations principle for invariant measures of stochastic Burgers equations
}
	\author[a]{Rui Bai}
	\author [a] {Chunrong Feng}
	\author[a,b]{Huaizhong Zhao}
	\affil[a]{Department of Mathematical Sciences, Durham University, DH1 3LE, UK}
	\affil[b] {Research Centre for Mathematics and Interdisciplinary Sciences, Shandong University, Qingdao 266237, China}
	
	\affil[ ]{rui.bai@durham.ac.uk, chunrong.feng@durham.ac.uk,  huaizhong.zhao@durham.ac.uk}
	\date{}
	
\maketitle

\begin{abstract}
	 We study the small noise asymptotic for stochastic Burgers equations on $(0,1)$ with the Dirichlet boundary condition. We consider the case in which the noise is more singular than space-time white noise. We let the noise magnitude $\sqrt{\epsilon} \rightarrow 0$ and the covariance operator $Q_\epsilon$ converge to $(-\Delta)^{\frac 1 2}$ and prove the large deviations principle for solutions, uniformly with respect to the initial value of the equation. Furthermore, we set $Q_\epsilon$ to be a trace class operator and converge to $(-\Delta)^{\frac{\alpha}{2}}$ with $\alpha<1$ in a suitable way such that the invariant measures exist. Then, we prove the large deviations principle for the invariant measures of stochastic Burgers equations.
    \medskip
	
	\noindent

	{\bf MSC2020 subject classifications:} Primary 60H10, 60B10; secondary 37A50.

	{\bf Keywords}: uniform large deviations principle; large deviations principle; invariant measures; stochastic Burgers equation; quasi-potential
\end{abstract}

\tableofcontents

\section{Introduction}

In this paper, we consider the small noise large deviations principle (LDP) for stochastic Burgers equation (SBE)
\begin{equation}
	\label{sbe1}
\partial_t u_\epsilon (t,\xi ) = \Delta u_\epsilon (t,\xi) + \frac{1}{2}D(u^2_\epsilon (t,\xi )) + \sqrt{\epsilon Q_{\epsilon}}\partial_t W(t,\xi), \ t \geq 0 , \ \xi \in [0,1]
\end{equation}
with Dirichlet boundary condition
\begin{equation*}
	u_\epsilon(t,0) = u_\epsilon(t,1)= 0,\  t \geq 0.
\end{equation*}
The function $u_\epsilon(t,\xi)$ takes value in $\mathbb{R}$. The stochastic perturbation $\partial_t W(t,\xi)$ is a space-time white noise. The operator $Q_{\epsilon}^{\frac{1}{2} }$ depends on the noise magnitude $\epsilon$ and will be defined later. In the present paper, we are interested in the behavior of Eq. (\ref{sbe1}) as $\epsilon$ goes to 0.

The above equation plays an important role in fluid dynamics as a simple model for turbulence. This stochastic model has been intensively studied for decades, see, e.g., Da Prato, Debussche and Temam\cite{Da_Prato_1994} for the well-posedness of mild solution; and Goldys and Maslowski\cite{goldys2005exponential}, Gourcy\cite{gourcy2007large} for invariant measures. About the large deviations for the SBE driven by space-time white noise, small noise asymptotic for the law of solutions were investigated by Cardon-Weber \cite{CARDONWEBER199953}. In Mattingly, Romito and Su\cite{Mattingly_Romito_Su_2022}, the authors investigated the equivalence between the law of SBE solution and the law of stochastic Heat equation when $\sqrt{Q} \approx (-\Delta)^{\frac{\alpha }{2}}$, where $\alpha <1$. It is well known that when $\alpha < \frac{1}{2}$, the corresponding standard energy estimates guarantee the existence of a unique global solution. When $\alpha \geq \frac{1}{2}$ , the solution is no longer a function, but a distribution. However, if the convergence of $\sqrt{Q_\epsilon }$ to $(-\Delta)^{\frac{1}{4}}$ is sufficiently slow, it may be possible to show that $u_\epsilon $ converges to the deterministic solution of the equation without noise since the noise magnitude converges to zero faster.

Motivated by the scope described above, with a suitable choice of $\sqrt{Q_\epsilon }$ that converges to $(-\Delta)^{\frac{1}{4}}$, we prove a uniformly large deviations principle for the law of solutions to Eq. (\ref{sbe1}). Here, $\sqrt{Q_\epsilon }$ can be non-trace class operator. Furthermore, the noise operator is chosen to be $\sqrt{Q_\epsilon}=(-\Delta)^{\alpha/2}\sqrt{Q_{\delta(\epsilon)}}$ for $\alpha < 1/2$, where $\sqrt{Q_{\delta(\epsilon)}}\partial_tW(t,\xi)$ formally converges to the space-time white noise in a smooth enough way such that there exists a unique invariant measure $\nu _\epsilon $ for each $\epsilon $ (see \cite{goldys2005exponential}). It can be shown that if the noise magnitude converges to zero fast enough, these measures converge weakly to the Dirac measure at $0$. In this connection, the aim is to prove a LDP for $\nu _\epsilon $.

The small noise LDP and ULDP for the law of solutions have been established for a variety of SPDEs, including stochastic Navier-Stokes equations (Bessaih and Millet\cite{bessaih2009large}, Chang\cite{CHANG199665}, Sritharan and Sundar\cite{SRITHARAN20061636}), stochastic reaction-diffusion equations (Cerrai and Debussche\cite{cerrai2019largea}, Sowers\cite{sowers1992largea}) and nonlinear Schrödinger equation (Gautier\cite{GAUTIER20051904}). The ULDP for a general class SPDEs has also been investigated deeply by many works (see e.g. Salins\cite{salins2019uniform}, Salin, Budhiraja and Dupuis \cite{salins2019equivalences}). For equations with variable covariance noise, the weak convergence method developed in Budhiraja and Dupuis\cite{budhiraja2000variational}, Budhiraja, Dupuis and Maroulas \cite{budhiraja2008large}, Duluis and Ellis \cite{dupuis2011weak} is widely used. In Cerrai and Debussche \cite{cerrai2019large}, the authors used a weak convergence method to prove the LDP for the 2D stochastic Navier-Stokes equation with noise that converges to space-time white noise. The study of LDP for invariant measures of infinite-dimensional stochastic systems began in the work of Sowers \cite{sowers1992large}. In Cerrai and R{\"o}ckner \cite{cerrai2005large}, the authors proved the LDP for invariant measures of stochastic reaction-diffusion equations with multiplicative noise under suitable assumptions. In Martirosyan \cite{martirosyan2017large}, the author studied the LDP for stationary measures of stochastic nonlinear wave equations. The LDP for invariant measures of singular SPDEs has also been studied recently. In Klose and Mayorcas \cite{klose2024large}, the authors combined the LDP techniques developed in \cite{sowers1992large,cerrai2005large} with the regularity structures theory to prove the LDP for the $\Phi^4_3$ measure. 

In recent years, the LDP for invariant measures of the 2D stochastic Navier-Stokes equations was established by Brze{\'z}niak and Cerrai in \cite{brzezniak2017large,brzezniak2015quasipotential}. In \cite{brzezniak2015quasipotential}, they showed that for the fixed trace class noise operator $\sqrt{Q_\delta }$, the quasi-potential $U_\delta(x)$ of the stochastic Navier-Stokes equation has compact level sets, which means that $U_\delta (x)$ can be a good rate function of LDP. They also proved that if the problem is considered on the torus, the quasi-potential $U_\delta (x)$ converges pointwise to $U(x)= \| x\Vert _{[H^1(\mathbb{T} ^2)]^2}$ when $\sqrt{Q_\delta }$ converges to $\sqrt{Q_0}= I $. In \cite{brzezniak2017large}, it was proven that the invariant measures satisfy a LDP in $[L^2(\mathbb{T} ^2)]^2$ with a good rate function $U_\delta $. The paper \cite{cerrai2022large} bridged the results of \cite{brzezniak2015quasipotential,brzezniak2017large} with the result of \cite{cerrai2019large}. In that paper, the authors took the noise magnitude converging to $0$ and $\sqrt{Q_\epsilon }$ to the identity operator simultaneously. They showed that the invariant measures satisfy the LDP with a good rate function $U(x)$. The first step in proving the LDP for invariant measures is to prove an ULDP for the law of the solution. In \cite{cerrai2022large}, Cerrai and Paskal achieved that by first proving the LDP for a linearized problem using a weak convergence approach. After that, the ULDP is followed from the uniform contraction principle. In order to apply the uniform contraction principle, exponential tightness for the law of stochastic convolution is needed in their method.

For the problem of LDP for the law of solutions of SBEs, 
there have been a small number of works for space-time white noise in the literature (see e.g. \cite{CARDONWEBER199953}).
Our noise is allowed to be more singular than space-time white noise and our result is about ULDP. For any fixed $\epsilon>0$, the covariance operator $Q_\epsilon$ is of order $(-\Delta)^{\alpha_\epsilon/2}$, $\alpha_\epsilon < 1$. But when $\epsilon \rightarrow 0$, roughly speaking, $Q_\epsilon$ converges to an operator of order $(-\Delta)^{1/2}$. This is the best result one could achieve as long as the large deviations principle on a function space is studied, since if $Q_\epsilon$ is of order $(-\Delta)^{\alpha/2} $, $\alpha \geq 1$, the solution of SBE is no longer a function, but a distribution. 
To obtain this optimal result, the method in Cerrai and Paskal \cite{cerrai2022large} is hard to work here. This is because Itô's formula is no longer applicable since $Q_\epsilon$ could be of a non-trace class. Hence, it is difficult to obtain the exponential estimate for the law of stochastic convolution that is needed in their method. So we first prove the LDP for the stochastic convolution. Then, we use this result to prove the ULDP for the law of solutions of SBEs by two different methods. The first is the equicontinuous uniform Laplace principle approach introduced by Salins in \cite{salins2019equivalences}. The second is a modification of the uniform contraction principle as we put in Section \ref{sec: ULDP}. 
An exponential estimate is not necessary for both methods. 

In the second part of the paper, we investigate the LDP for invariant measures of SBEs.  
To the best of our knowledge, this is the first paper in the study of the large deviations for invariant measures of this type of equations.  For this, we first choose the covariance operator to be a trace class operator such that the SBE has an invariant measure $\nu_\epsilon$ for any fixed $\epsilon>0$. But $Q_\epsilon =(-\Delta)^{\alpha/2}[I + \delta(\epsilon)(-\Delta)^\beta]^{-1/2} \rightarrow (-\Delta)^{\alpha/2} $ in $D((-\Delta)^{\alpha/2})$, $0\leq \alpha< 1/2$, $1/2 <\beta -\alpha<1$. In this case, $\nu_\epsilon \rightarrow \delta_0$ as $\epsilon \rightarrow 0$, where $\delta_0$ is the Dirac measure. We further obtain the LDP for the invariant measures with the quasi-potential $U^\alpha$ defined by (\ref{quasi potential 1}) as its good rate function.

The difficulty in this part is to show an exponential estimate on a compact subset for the family of invariant measures $v_\epsilon $. In \cite{cerrai2022large}, the compact subset has been chosen to be the bounded set of $D(A^{1/2})$. Then, the desired exponential estimates are based on the property of the 2D SNS equation on torus that 
$$\langle B(u),Au\rangle_H = 0, \ u \in D(A), $$
where $A$ is the Stokes operator and $B$ is the nonlinear operator of the equation. However, such a property does not hold for stochastic Burgers equation. We need a new approach to obtain the exponential estimates. In this paper, we choose the compact subset to be the closed ball of $D((-\Delta)^{2\sigma})$ for some small $\sigma>0$ in $L^2(0,1)$ following the compact Sobolev embedding. Then, we develop a new method in Section \ref{sec:exp esti} to prove the exponential estimates on this subset for the family of invariant measures. This part is highly nontrivial.


The paper is arranged as follows. In the next section, we give the notation of this paper and we recall the definitions of the LDP and the mild solution. In Section \ref{sec: ULDP}, we prove the ULDP for the path of stochastic Burgers equation through two different approaches. The assumption in this section does not need $\sqrt{Q_\epsilon}$ to be of trace class. In Section \ref{sec: Rate Fun}, we study the rate function of LDP for invariant measures. Since $\sqrt{Q_\epsilon} = (-\Delta)^{\alpha/2}\sqrt{Q_\delta(\epsilon)}$ is convergent to $(-\Delta)^{\alpha/2}$ as $\epsilon \rightarrow 0$ for $\alpha<\frac 1 2$, we need to deal with the skeleton equation in a negative Sobolev space. We prove that the quasi-potential $U^\alpha$ defined by (\ref{quasi potential 1}) is a good rate function. In Section \ref{sec:LDP for invariant measure}, we first prove the lower bound of LDP. After that, we prove an exponential estimate for the invariant measure on the closed ball of $D((-\Delta)^{2\sigma})$. This then leads to the LDP upper bound by using the ideas of \cite{sowers1992large,brzezniak2017large,cerrai2022large}. 
\section{Preliminaries}
We define $H := \{u \in L^2(0,1): u(0) = u(1) =0\}$ endowed with the standard $L^2(0,1)$ inner product $\langle \cdot ,\cdot \rangle_H $. The family $\left\{e_k\right\}_{k \in \mathbb{N} }$ defined by
\begin{equation*}
	e_k(\xi) = \sqrt{2}\sin (k \pi \xi) , \ k \in \mathbb{N} , \ \xi \in [0,1],
\end{equation*}
form a complete orthonomal system in $H$. We define the operator $A$ by 
\begin{equation*}
	A := -\Delta, \ D(A) = \{ u \in W^{2,2}(0,1): u(0) = u(1) = 0\} = W^{2,2}(0,1) \bigcap W^{1,2}_0(0,1).
\end{equation*}
It is clear that for each $e_k$
\begin{equation*}
	Ae_k =  (k\pi)^2e_k,
\end{equation*}
where
$A$ is a positive, self-adjoint operator in $H$. For any $r \in \mathbb{R} $ we define its fractional power $A^{\frac{r}{2}}$ with domain $D(A^{\frac{r}{2}})=: H^r = H^r(0,1)$ which is the closure of ${\rm span}_{k\in \mathbb{N}} e_k$ with respect to the norm $\| \cdot\Vert_{H^r} $ given by
\begin{equation*}
	 \| u\Vert _{H^r}^2 := \| u\Vert _{D(A^{\frac{r}{2}})}^2 :=\langle A^{\frac{r}{2}}u, A^{\frac{r}{2}}u\rangle_H = \Sigma _{ k \in \mathbb{N}} (k\pi)^{2r} \langle u,e_k \rangle^2 .
\end{equation*}
We will use the notation 
$L^p := L^p(0,1)$, $p \geq 1$.
Now we can rewrite Eq. (\ref{sbe1}) as a stochastic evolution equation.
\begin{equation}
	\label{sbe2}
	\begin{cases}
		du_\epsilon (t) + Au_\epsilon (t)dt = \frac{1}{2} D(u^2_\epsilon (t)) + \sqrt{\epsilon Q_{\epsilon}} dW(t),\\
		u_\epsilon(0) = x.
	\end{cases}
\end{equation}
Here, $W(t), t \in \mathbb{R} ^+$ is a cylindrical Wiener process in $H$ and has the expansion
\begin{equation*}
	W(t,\xi) = \sum_{k \in \mathbb{N} } e_k(\xi)\beta_k(t),\ t \geq  0, \ \xi \in (0,1),
\end{equation*}
where $\{\beta_k\}_{k \in \mathbb{N} }$ are independent, real-valued Brownian motions on a fixed probability space $(\Omega , \mathcal{F} , \mathbb{P} )$ adapted to a filtration $ \{\mathcal{F}_t\}_{t \geq 0}$. The covariance operator $Q_\epsilon$ is defined by $$\sqrt{Q_\epsilon}e_k = \sigma _{\epsilon ,k} e_k,\ k \in \mathbb{N}. $$ Then the driving noise has the expression 
\begin{equation*}
	\sqrt{ Q_{\epsilon}} W(t,\xi) =  \sum_{k \in \mathbb{N} } \sigma _{\epsilon , k}e_k(\xi)\beta _k(t).
\end{equation*}
Denote by $S(t) = e^{-At},\ t \geq 0$, the semigroup on $H$ generated by $-A$. We define $Z_\epsilon $ to be the mild solution of the linear equation 
\begin{equation}
	\label{OU1}
\begin{cases}
	dZ_\epsilon  + AZ_\epsilon dt = \sqrt{\epsilon Q_{\epsilon}} dW(t),\\
	Z_\epsilon (0)=0.
\end{cases}
\end{equation}
It is well-known that $Z_\epsilon $ has the expression 
\begin{equation}
	\label{OU}
	Z_\epsilon (t) = \int_{0}^{t} S(t-s) \sqrt{\epsilon Q_{\epsilon}} \,dW(s).
\end{equation}
The difference $Y^x_\epsilon (t) = u_\epsilon ^x(t) - Z_\epsilon(t) $ satisfies formally the equation
\begin{equation}
	\label{Y}
	\begin{cases}
		\frac{d }{d t}Y^x_\epsilon (t) + AY^x_\epsilon  = \frac{1}{2}D\left[(Y^x_\epsilon(t) + Z_\epsilon(t))^2 \right],\\
		Y^x_\epsilon (0) = x.
	\end{cases}
\end{equation}
We define $Y_\epsilon ^x$ to be the solution of deterministic integral equation
\begin{equation}
	\label{YM}
	Y^x_\epsilon(t) = S(t)x + \frac{1}{2}\int_{0}^{t} S(t-s)D\left[(Y^x_\epsilon(t) + Z_\epsilon(t))^2 \right] \,ds,\ t \geq 0 .
\end{equation}
\begin{definition}
	A process $u^x_\epsilon$ is said to be a mild solution of Eq. (\ref{sbe2}) if and only if $Y^x_\epsilon = u^x_\epsilon - Z_\epsilon$ is a solution of Eq. (\ref{YM}).
\end{definition}

The aim of this paper is to study the large deviation behavior as $\epsilon $ goes to 0. We recall Freidlin-Wentzell's formulation.
\begin{definition}\label{LDP}
	Let $E$ be a Banach space. Suppose that $\{\mu _\epsilon \}_{\epsilon > 0}$ is a family of probability measures on $E$ and the action function $I:E\rightarrow [0,+\infty]$ is a good rate function, which means that for each $r \geq  0$, the level set $\Phi(r) := \{h \in E: I(h) \leq r\}$ is a compact subset of $E$. The family $\{\mu _\epsilon \}_{\epsilon > 0}$ is said to satisfy a large deviation principle (LDP) in $E$, with the rate function $I$, if the following two conditions hold.
	\item[(1)](Lower bound) For every $r \geq  0$, $\delta > 0$ and $\gamma > 0$, there exists $\epsilon _0 > 0$ such that
	\begin{equation*}
		\mu _\epsilon \left(B_E(\varphi, \delta ) \right) \geq  \exp \left(-\frac{I(\varphi) + \gamma }{\epsilon }\right),
	\end{equation*}
	for any $\epsilon \leq \epsilon _0$ and $\varphi  \in \Phi(r)$, where $B_E(\varphi , \delta) := \left\{h \in E: \| h -\varphi \Vert_E < \delta \right\}$. 
	\item[(2)] (Upper bound) For every $r_0 \geq 0$, $\delta > 0$ and $\gamma > 0$, there exists $\epsilon _0 > 0$ such that 
	\begin{equation*}
		\mu _\epsilon \left(B^c_E(\Phi(r), \delta ) \right) \leq \exp \left(-\frac{r-\gamma }{\epsilon }\right),
	\end{equation*}
	for any $\epsilon \leq  \epsilon _0$ and $r \leq r_0$, where $B^c_E(\Phi(r), \delta ) := \left\{h \in E: dist_E(h,\Phi(r)) \geq \delta \right\}$.
	 
\end{definition}
\section{ULDP for paths}\label{sec:ULDP}
\subsection{Stochastic Burgers equations}
\begin{proposition}
	\label{wellposedness of SBE}
	Assume $\sum_{k \in \mathbb{N}}\frac{\sigma^2 _{\epsilon ,k}}{k^2} < \infty$, then for any $x \in H$, there exists a unique mild solution $u^x_\epsilon $ of Eq. (\ref{sbe2}) and for all $T > 0$, $u_\epsilon ^x \in L^\infty([0,T]; H)$.

\end{proposition}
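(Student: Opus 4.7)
The plan is to work pathwise via the splitting $u^x_\epsilon = Y^x_\epsilon + Z_\epsilon$ and to analyse (\ref{YM}) as a random deterministic integral equation for $Y^x_\epsilon$ on each fixed sample path of $Z_\epsilon$. The first task is to upgrade the assumption $\sum_k \sigma_{\epsilon,k}^2/k^2 < \infty$ to the pathwise regularity that the Burgers estimates require. From (\ref{OU}),
$$\mathbb{E}|Z_\epsilon(t,\xi)|^2 \;=\; \epsilon\sum_{k\in\mathbb{N}}\sigma_{\epsilon,k}^2\,e_k(\xi)^2\,\frac{1-e^{-2(k\pi)^2 t}}{2(k\pi)^2}\;\leq\; C\epsilon\sum_{k\in\mathbb{N}}\frac{\sigma_{\epsilon,k}^2}{k^2},$$
and the Gaussianity of $Z_\epsilon(t,\xi)$ promotes this to $Z_\epsilon \in L^p([0,T]\times(0,1))$ almost surely for every $p\geq 2$. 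In particular $\int_0^T \|Z_\epsilon(s)\|_{L^4}^4\,ds < \infty$ a.s., and the factorisation method of Da Prato--Zabczyk supplies a continuous $H$-valued version of $Z_\epsilon$.

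With a sample path of $Z_\epsilon$ fixed, I would obtain local-in-time existence of $Y^x_\epsilon \in C([0,T_0];H)$ via a Banach fixed-point argument on
$$\mathcal{T}(Y)(t) := S(t)x + \tfrac{1}{2}\int_0^t S(t-s)\, D\bigl[(Y(s)+Z_\epsilon(s))^2\bigr]\,ds.$$
The key input is the one-dimensional smoothing estimate $\|S(t)Dv\|_H \leq C\,t^{-3/4}\|v\|_{L^1}$, which, combined with $\|v^2\|_{L^1} = \|v\|_H^2$, makes $\mathcal{T}$ a strict contraction on a suitable ball in $C([0,T_0];H)$ once $T_0$ is small enough, depending only on $\|x\|_H$ and $\sup_{[0,T]}\|Z_\epsilon\|_H$.

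Global existence and the claimed $L^\infty([0,T];H)$ bound then follow from an energy estimate on (\ref{Y}). Testing against $Y$, the cubic self-interaction $\int_0^1 Y^2\,DY\,d\xi = 0$ vanishes thanks to the Dirichlet boundary condition, while the mixed and purely stochastic terms are controlled by the 1D Gagliardo--Nirenberg inequality $\|Y\|_{L^4}\leq C\|Y\|_H^{3/4}\|A^{1/2}Y\|_H^{1/4}$ and Young's inequality, yielding
$$\tfrac{d}{dt}\|Y(t)\|_H^2 + \|A^{1/2}Y(t)\|_H^2 \;\leq\; C\bigl(1+\|Z_\epsilon(t)\|_{L^4}^4\bigr)\bigl(1+\|Y(t)\|_H^2\bigr).$$
Integrability of $\|Z_\epsilon\|_{L^4}^4$ closes the estimate through Gronwall, ruling out blow-up and delivering $Y^x_\epsilon \in L^\infty([0,T];H)$ a.s., whence $u^x_\epsilon = Y^x_\epsilon + Z_\epsilon \in L^\infty([0,T];H)$. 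Uniqueness follows by applying the same smoothing estimate to the difference of two candidate solutions and iterating on short subintervals.

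The main obstacle is that, since $Q_\epsilon$ is allowed to be non-trace-class, $Z_\epsilon$ need not lie in any positive-order Sobolev space, so standard ``noise-lifting'' or It\^o-based arguments on $u_\epsilon$ itself are unavailable. The whole scheme must be driven by the exact algebraic cancellation $\langle D(Y^2),Y\rangle_H = 0$, which is what allows the analysis to consume only the sharp $L^4_{t,\xi}$-integrability of $Z_\epsilon$ furnished by the hypothesis $\sum_k\sigma_{\epsilon,k}^2/k^2 < \infty$.
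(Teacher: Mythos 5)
Your proposal is correct and follows essentially the same strategy as the paper: the $u = Y + Z$ splitting, a local fixed-point argument using the smoothing estimate $\|S(t)Dv\|_H \lesssim t^{-3/4}\|v\|_{L^1}$, and a global energy estimate that exploits the cancellation $\int_0^1 Y^2 Y_\xi\,d\xi = 0$ together with the interpolation $\|Y\|_{L^4}\lesssim\|Y\|_H^{3/4}\|Y\|_{H^1}^{1/4}$ and the a.s.\ $L^4_{t,\xi}$-integrability of $Z_\epsilon$, closed by Gronwall. The only point the paper is more careful about (and you elide) is that the energy estimate is first derived for regularized data $\phi\in C([0,T];H^1_0)$, $x\in D(A)$, with bounds depending only on $\|\phi\|_{C([0,T];L^4)}$ and $\|x\|_H$, and then transferred to $\phi = Z_\epsilon$, $x\in H$ by density; this is a standard approximation step needed to justify the formal integration by parts.
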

\begin{proof}
	First we prove the existence and uniqueness of the local solution. We fix $T > 0$ and any $1 \leq p < \infty$. By Burkholder-Davis-Gundy type inequality for stochastic convolutions (see  Proposition 7.3, \cite{da2014stochastic}), we have 
	\begin{align*}
		\mathbb{E} \sup_{0 \leq t \leq T}\| Z_\epsilon (t)\Vert _{L^p}^p &= \epsilon ^{p/2}\mathbb{E} \sup_{t \in [0,T]}\left\| \int_{0}^{t} S(t-s)\sqrt{Q_\epsilon } \,dW(s) \right \Vert_{L^p}^p \\ &\leq c_p\epsilon ^{p/2} \mathbb{E} \left| \int_{0}^{T}\left\| S(t-s)\sqrt{Q_\epsilon }\right\Vert_{HS}^2\, ds \right|^{p/2} \\ &\leq c_p\epsilon ^{p/2} \left| \sum_{k \in \mathbb{N} } \int_{0}^{T} e^{-2k^2\pi^2s}\sigma _{\epsilon, k}^2\,ds \right|^{p/2} \\ &\leq c_p\epsilon ^{p/2}\left(\sum_{k \in \mathbb{N}}\frac{\sigma^2 _{\epsilon ,k}}{k^2}\right)^{p/2} < \infty.
	\end{align*}
	Then we infer that $Z_\epsilon \in C([0,T];L^p)$ almost surely. 
	
	Thanks to Lemma 14.2.1 in \cite{da1996ergodicity} , we know that $F(u)(\cdot) =  \frac{1}{2}\int_{0}^{\cdot} S(\cdot-s)D\left(u(s) \right) \,ds$ is a bounded linear mapping from $C([0,T];L^1)$ into $C([0,T];H)$. Define $F_2(u) = F(u^2)$, then for some $u_\epsilon  \in C([0,T];H)$ and $C >0$	
 \begin{align*}
		\sup_{0 \leq t \leq T} \left\| F_2(u_\epsilon ^x)(t) - F_2(\bar{u}_\epsilon ^x) \right \Vert _{L^2} & \leq \sup_{0 \leq t \leq T} C t^{1/4} \sup_{0 \leq s \leq t} \left\| (u_\epsilon ^x)^2(s) - (\bar{u}_\epsilon ^x )^2(s)\right \Vert _{L^1} \\ & \leq C T^{1/4} \sup_{0 \leq t \leq T} \|u_\epsilon ^x(t) - \bar{u}_\epsilon ^x(t)\Vert _{L^2} \|  u_\epsilon ^x (t)+ \bar{u}_\epsilon ^x(t)\Vert _{L^2} \\ & \leq C T^{1/4} \left\| u_\epsilon ^x - \bar{u}_\epsilon ^x \right\Vert _{L^\infty([0,T];H)}\left(\| u_\epsilon ^x\Vert_{L^\infty([0,T];H)}  + \| \bar{u}_\epsilon ^x\Vert_{L^\infty([0,T];H)} \right) .
	\end{align*}

	We select $\alpha^\prime > 4\| x\Vert _H$. For $\| u_\epsilon ^x\Vert_{L^\infty([0,T];H)}, \| \bar{u}_\epsilon ^x\Vert_{L^\infty([0,T];H)} \leq \alpha' $, we can choose $T(\omega )$ such that 
	\begin{equation*}
		2CT(\omega )^{1/4}\alpha' \leq 1/2 , \  \sup_{0 \leq t \leq T(\omega )}\|S(t)x + Z_\epsilon (t) \Vert_H \leq \frac{1}{2}\alpha' .
	\end{equation*}
	Since \begin{equation}
		\label{uM}
		u_\epsilon ^x(t) = S(t)x + F_2(u_\epsilon ^x)(t) + Z_\epsilon (t),
	\end{equation}
	by Lemma 15.2.4 in \cite{da1996ergodicity}, using a contraction argument on the Banach space $L^\infty([0,T(\omega )];H)$, it is not difficult to prove the existence and uniqueness of a mild solution of Eq. (\ref{sbe2}) in a small interval $[0, T(\omega )]$ depending on $\omega \in \Omega $.

	Next, we will show that this solution is actually global in time. We apply the analogous argument as Lemma 3.1 of \cite{Da_Prato_1994}. We consider the Eq. (\ref{YM}) with $Z_\epsilon $ being replaced by a regular function $\phi \in C([0,T];H^1)$ and $x \in D(A)$
\begin{equation}
	\label{Y1}
	\begin{cases}
		\partial _tY(t)+ AY(t) = \frac{1}{2}D\left[(Y(t) + \phi(t))^2 \right],\\
		Y(0) = x.
	\end{cases}
\end{equation}
It is not difficult to see Eq. (\ref{Y1}) can be solved by a fixed point argument in $C([0,T];H^1_0(0,1))$.

Now we find a prior estimate on the solution $Y$ which does not depend on the regularity assumptions on $\phi $ and $x$.

Multiplying both sides of Eq. (\ref{Y1}) by $Y$ and integrating with respect to $\xi $ in $[0,1]$ and by integration by parts we get
\begin{align*}
	\frac{1}{2}\frac{d}{dt}\int_{0}^{1} |Y(t)|^2 \,d\xi + \int_{0}^{1} Y_\xi^2 (t) \,d\xi  &=  -  \frac{1}{2}\int_{0}^{1} \left(Y(t)+\phi (t)\right)^2 Y_\xi (t) \,d\xi \\
	&= - \frac{1}{2}\int_{0}^{1} Y^2(t)Y_\xi  \,d\xi - \int_{0}^{1} \phi (t)Y(t)Y_\xi (t) \,d\xi - \frac{1}{2}\int_{0}^{1} \phi ^2(t)Y_\xi (t) \,d\xi     
\end{align*}
Notice that 
\begin{equation*}
	\int_{0}^{1} Y^2(t)Y_\xi (t) \,d\xi  = \frac{1}{3}\left[Y^3(t)\right]_0^1 =0.
\end{equation*}
We get 
\begin{equation*}
	\frac{1}{2}\frac{d}{dt}\| Y(t)\Vert _H^2 + \| Y(t)\Vert _{H^1}^2 \leq \| \phi \Vert _{L^{4}}\| Y_\xi (t)\Vert _{L^2}\| Y(t)\Vert _{L^{4}} + \frac{1}{2}\| \phi \Vert_{L^{4}}^2\| Y_\xi(t)\Vert _{H} . 
\end{equation*}
Applying the following interpolation inequality 
\begin{equation*}
	\| Y(t)\Vert _{L^{4}} \leq C_1\| Y(t)\Vert _H^{3/4}\| Y(t)\Vert _{H^1}^{1/4},
\end{equation*}
we have that
\begin{align*}
	\frac{1}{2}\frac{d}{dt}\| Y(t)\Vert _H^2 + \| Y(t)\Vert _{H^1}^2 &\leq C_1\| \phi(t) \Vert _{L^{4}} \| Y(t)\Vert _H^{3/4}\| Y(t)\Vert _{H^1}^{5/4} + \frac{1}{2}\| \phi(t) \Vert _{L^{4}}^2\| Y(t)\Vert_{H^1}\\ &\leq  C_2\| \phi(t) \Vert _{L^{4}}^{8/3} \| Y(t)\Vert _H^2 + \frac{1}{4} \| Y(t)\Vert_{H^1}^2 + \frac{1}{4}\| \phi(t) \Vert _{L^{4}}^4 + \frac{1}{4} \| Y(t)\Vert_{H^1}^2.
\end{align*}
Then 
\begin{equation*}
	\frac{d}{dt}\| Y(t)\Vert _H^2 + \| Y(t)\Vert _{H^1}^2 \leq 2C_2\| \phi(t) \Vert _{L^4}^{8/3} \| Y(t)\Vert _H^2 + \frac{1}{2}\| \phi(t) \Vert _{L^4}^4.
\end{equation*}
By using the Gronwall inequality, we have 
\begin{equation}
	\label{Y estimate 1}
	\| Y(t)\Vert _H^2 \leq \| x\Vert _H^2 \exp\left(2C_2\int_{0}^{t} \| \phi(s) \Vert _{L^4}^{8/3} \,ds \right) + \frac{1}{2} \exp\left(2C_2\int_{0}^{t} \| \phi(s) \Vert _{L^4}^{8/3} \,ds \right) \int_{0}^{t} \| \phi(s) \Vert _{L^4}^4 \,ds
\end{equation}
and \begin{equation}
	\label{Y estimate 2}
	\int_{0}^{t} \| Y(s)\Vert _{H^1}^2 \,ds \leq \int_{0}^{t} \left[2C_2\| \phi(s) \Vert _{L^4}^{8/3} \| Y(s)\Vert _H^2 + \frac{1}{2}\| \phi(s) \Vert _{L^4}^4 \,\right]ds + \|x\Vert _H^2.
\end{equation}
Note estimate (\ref{Y estimate 1}) and (\ref{Y estimate 2}) only depend on $C([0,T];L^4 )$ norm of $\phi$ rather that its $C([0,T];H_0^1(0,1) )$ norm though we assume $\phi \in C([0,T];H_0^1(0,1) ) $ at the beginning. Moreover, $C([0,T];H^1_0(0,1))$ is dense in $C([0,T];L^4)$, thus by approximation method, it is not difficult to see that (\ref{Y estimate 1}), (\ref{Y estimate 2}) hold for $\phi \in C([0,T];L^4 ) $. For the same reason, they hold for $x \in H$. Moreover, we know that for any $T > 0$, the path of $Z_\epsilon $ is in $C([0,T];L^4)$. The estimates (\ref{Y estimate 1}) and (\ref{Y estimate 2}) still holds for any local solution $Y_\epsilon ^x$  of the equation with $\phi $ replaced by $Z_\epsilon $. This shows that local solutions cannot explode in any finite time and we prove the global existence of solutions to Eq. (\ref{sbe2}).

Again by (\ref{Y estimate 1}) and (\ref{Y estimate 2})
\begin{equation}
	\label{Y space}
	Y_\epsilon ^x \in L^\infty ([0,T];H) \bigcap L^2([0,T];H^1)
\end{equation}
which implies 
\begin{equation*}
	u_\epsilon ^x \in L^\infty ([0,T];H).
\end{equation*}
\end{proof}


Next, we prove a lemma which will be used in the later part. We will show that the solution $Y_\epsilon ^x$ to the Eq. (\ref{YM}) is continuous with respect to the OU process $Z_\epsilon $.

For every $x \in H$, let $\mathcal{G} _x : L^4([0,T]; L^4) \rightarrow L^\infty([0,T];H)$ be the mapping that maps any $\phi  \in L^4([0,T];L^4)$ to the solution of the equation 
\begin{equation*}
	Y^x(t) = S(t)x + \frac{1}{2}\int_{0}^{t} S(t-s)D\left[(Y^x(s)+\phi (s))^2\right] \,ds,\ t \in [0,T] .
\end{equation*}
By the proof of Proposition \ref{wellposedness of SBE}, this mapping is well-defined. We have the following result.
\begin{lemma}
	\label{continuous lemma}
	For every fixed $T > 0$, the mappings $\mathcal{G} _x : L^4([0,T]; L^4) \rightarrow L^\infty([0,T];H)$ are Lipschitz continuous on any bounded balls, uniformly in x in any bounded sets of $H$. That is, for any $r > 0$ and $R > 0$, there exists a constant $C_{r,R} > 0$ such that 
	\begin{equation}
		\label{continuous1}
		\sup_{x \in B_H(r)\, \phi ,\varphi  \in B_{L^4([0,T]; L^4)}(R)}\left\| \mathcal{G} _x(\phi ) - \mathcal{G} _x(\varphi ) \right\Vert _{L^\infty([0,T];H)} \leq C_{r,R} \left\| \phi -\varphi \right\Vert _{L^4([0,T]; L^4)}.
	\end{equation}
\end{lemma}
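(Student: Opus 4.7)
Let $Y_\phi := \mathcal{G}_x(\phi)$ and $Y_\varphi := \mathcal{G}_x(\varphi)$, and set $W := Y_\phi - Y_\varphi$, $\psi := \phi - \varphi$. The plan is to close an energy estimate on $\|W\|_H^2$ by a Gronwall argument, and then invoke the a~priori bounds (\ref{Y estimate 1})--(\ref{Y estimate 2}) established inside the proof of Proposition~\ref{wellposedness of SBE} (applied with $\phi$ and $\varphi$ in place of $Z_\epsilon$) to control all quantities in terms of $r$ and $R$. The starting point is the algebraic identity
\begin{equation*}
(Y_\phi+\phi)^2 - (Y_\varphi+\varphi)^2 = (W+\psi)\,\Xi,\qquad \Xi := Y_\phi + Y_\varphi + \phi + \varphi,
\end{equation*}
so that $W$ formally solves the linearised problem $\partial_t W + AW = \tfrac12 D[(W+\psi)\,\Xi]$ with $W(0)=0$.

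Testing with $W$ and integrating by parts in $\xi$ yields
\begin{equation*}
\frac{1}{2}\frac{d}{dt}\|W(t)\|_H^2 + \|W(t)\|_{H^1}^2 = -\frac{1}{2}\int_0^1 (W+\psi)\,\Xi\, W_\xi\,d\xi.
\end{equation*}
Splitting the right-hand side, applying H\"older with exponents $(4,4,2)$, the interpolation $\|u\|_{L^4}\leq C\|u\|_H^{3/4}\|u\|_{H^1}^{1/4}$, and Young's inequality to absorb two $\tfrac14\|W\|_{H^1}^2$ contributions on the left, I arrive at
\begin{equation*}
\frac{d}{dt}\|W(t)\|_H^2 + \|W(t)\|_{H^1}^2 \leq C\|\Xi(t)\|_{L^4}^{8/3}\|W(t)\|_H^2 + C\|\Xi(t)\|_{L^4}^2\|\psi(t)\|_{L^4}^2.
\end{equation*}
Since $W(0)=0$, Gronwall together with the Cauchy--Schwarz inequality in time gives
\begin{equation*}
\sup_{t\in[0,T]}\|W(t)\|_H^2 \leq C\exp\!\left(C\,T^{1/3}\|\Xi\|_{L^4([0,T];L^4)}^{8/3}\right)\|\Xi\|_{L^4([0,T];L^4)}^2\,\|\psi\|_{L^4([0,T];L^4)}^2.
\end{equation*}

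It remains to bound $\|\Xi\|_{L^4([0,T];L^4)}$ uniformly for $x\in B_H(r)$ and $\phi,\varphi\in B_{L^4([0,T];L^4)}(R)$. The contributions of $\phi,\varphi$ are controlled by $R$ directly. For $Y_\phi$ and $Y_\varphi$, the same interpolation combined with (\ref{Y estimate 1})--(\ref{Y estimate 2}) (applied with $\phi$, resp.~$\varphi$, in place of $Z_\epsilon$) yields
\begin{equation*}
\int_0^T \|Y_\phi(t)\|_{L^4}^4\,dt \leq C\,\|Y_\phi\|_{L^\infty([0,T];H)}^3\,T^{1/2}\|Y_\phi\|_{L^2([0,T];H^1)},
\end{equation*}
whose right-hand side is $\leq C_{r,R}$ since both factors are bounded in (\ref{Y estimate 1})--(\ref{Y estimate 2}) purely in terms of $\|x\|_H$ and $\|\phi\|_{L^4([0,T];L^4)}$. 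Taking square roots then delivers (\ref{continuous1}).

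The step I expect to be most delicate is ensuring that only the $L^4([0,T];L^4)$ norm of the driving data $\phi,\varphi$ appears on the right-hand side of the final bound. The factorization of the nonlinearity together with the specific H\"older/interpolation exponents are engineered precisely for this purpose: any stronger space-time Sobolev norm of $\phi,\varphi$ would destroy Lipschitz continuity on $L^4_tL^4_\xi$-balls, which is the natural regularity of the stochastic convolution $Z_\epsilon$ in this paper.
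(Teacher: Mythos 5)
Your proposal reproduces the paper's argument essentially verbatim: the same algebraic factorization of the difference of squares, the same energy estimate with $L^4$ interpolation and Young's inequality yielding a Gronwall bound quadratic in $\|\phi-\varphi\|_{L^4_tL^4_\xi}$, and the same appeal to the a~priori estimates (\ref{Y estimate 1})--(\ref{Y estimate 2}) to bound $\|\Xi\|_{L^4([0,T];L^4)}$ uniformly over $x\in B_H(r)$ and $\phi,\varphi\in B_{L^4_tL^4_\xi}(R)$. The only differences are cosmetic: notation, the explicit extraction of the $T^{1/3}$ factor, and naming the time-integral bound "Cauchy--Schwarz" where for the exponent-$8/3$ term it is really H\"older with exponents $(3/2,3)$.
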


\begin{proof}
	For every $\phi ,\varphi  \in B_{L^4([0,T]; L^4)}(R)$, define $X := \mathcal{G} _x(\phi ) - \mathcal{G} _x(\varphi )$. Equation (\ref{Y1}) implies
	\begin{align*}
		\frac{d}{dt} X(t) &= \frac{d}{dt} \mathcal{G} _x(\phi )(t) -\frac{d}{dt}\mathcal{G} _x(\varphi)(t)\\ &= -AX(t) +  D\left[(\mathcal{G} _x(\phi )(t) + \phi (t))^2 - (\mathcal{G} _x(\varphi  )(t) + \varphi  (t))^2\right]\\ &= -AX(t) + D\left[(\mathcal{G} _x(\phi)(t) + \mathcal{G} _x(\varphi ) (t) + \phi (t) + \varphi (t))(\mathcal{G} _x(\phi)(t) - \mathcal{G} _x(\varphi ) (t) + \phi (t) - \varphi (t))\right]\\ &= -AX(t) + D\left[(\mathcal{G} _x(\phi)(t) + \mathcal{G} _x(\varphi ) (t) + \phi (t) + \varphi (t))(X(t) + \phi (t) - \varphi (t))\right].
	\end{align*}

	Define $\psi (t) = \mathcal{G} _x(\phi)(t) + \mathcal{G} _x(\varphi ) (t) + \phi (t) + \varphi (t)$. Multiplying by $X(t)$ and integrating by $\xi $, we have 
	\begin{align*}
		\frac{1}{2}\frac{d}{dt}\int_{0}^{1} |X(t)|^2 \,d\xi + \|X(t)\Vert _{H^1}^2 &= \int_{0}^{1} D\left[\psi (t)(X(t)+\phi (t)-\varphi (t))\right]X(t) \,d\xi \\ &= -\int_{0}^{1} \psi (t)(X(t)+\phi (t)-\varphi (t))DX(t) \,d\xi \\ &\leq \| \psi (t)\Vert _{L^4}\| X(t)\Vert _{L^4}\| X(t)\Vert _{H^1} + \| \psi(t) \Vert _{L^4}\| \phi (t) - \varphi (t)\Vert _{L^4}\| X(t)\Vert _{H^1}.
	\end{align*}
	It follows from interpolation inequality
	\begin{align*}
		\frac{1}{2}\frac{d}{dt}\| X(t)\Vert _H^2 + \|X(t)\Vert _{H^1}^2 &\leq C_1\| \psi (t)\Vert _{L^4}\| X(t)\Vert _H^{3/4}\| X(t)\Vert _{H^1}^{5/4} + \| \psi(t) \Vert _{L^4}\| \phi (t) - \varphi (t)\Vert _{L^4}\| X(t)\Vert _{H^1} \\ &\leq C_2\| \psi(t) \Vert _{L^4}^{8/3}\| X(t)\Vert _H^2 + \frac{1}{4}\| X(t)\Vert _{H^1}^2 + \| \psi(t) \Vert _{L^4}^2\| \phi (t) - \varphi (t)\Vert _{L^4}^2 + \frac{1}{4}\| X(t)\Vert _{H^1}^2.
	\end{align*}
	Then \begin{equation*}
		\frac{d}{dt}\| X(t)\Vert _H^2 + \|X(t)\Vert _{H^1}^2 \leq 2C_2\| \psi(t) \Vert _{L^4}^{8/3}\| X(t)\Vert _H^2 + 2\| \psi(t) \Vert _{L^4}^2\| \phi (t) - \varphi (t)\Vert _{L^4}^2.
	\end{equation*}
	Since $X(0) = x-x = 0$, it follows from the Gronwall inequality
	\begin{align*}
		\| X(t)\Vert _H^2 &\leq 2\exp\left(2C_2\int_{0}^{t} \| \psi(s) \Vert _{L^4}^{8/3} \,ds \right)\int_{0}^{t} \| \psi (s)\Vert _{L^4}^2\| \phi (s)-\varphi (s)\Vert_{L^4}^2  \,ds \\ &\leq 2\exp\left(2C_2\int_{0}^{t} \| \psi(s) \Vert _{L^4}^{8/3} \,ds \right)\| \psi \Vert _{L^4([0,t]; L^4)}^2\| \phi -\varphi \Vert _{L^4([0,t]; L^4)}^2.
	\end{align*}
	Then 
	\begin{equation}
		\label{continuity estimate}
		\left\| \mathcal{G} _x(\phi ) - \mathcal{G} _x(\varphi ) \right\Vert _{L^\infty([0,T];H)} \leq \sqrt{2}\exp\left(C_2\int_{0}^{T} \| \psi(s) \Vert _{L^4}^{8/3} \,ds \right)\| \psi \Vert _{L^4([0,T]; L^4)}\| \phi -\varphi \Vert _{L^4([0,T]; L^4)}.
	\end{equation}
	By (\ref{Y space}), we know that $\mathcal{G} _x(\phi ),\mathcal{G} _x(\varphi ) \in L^\infty ([0,T];H) \bigcap L^2([0,T];H^1) \subset L^4([0,T];L^4)$ which implies that for every $x \in B_H(r)$ and $\phi ,\varphi  \in B_{L^4([0,T]; L^4)}(R)$\begin{equation*}
		\| \psi \Vert _{L^4([0,T]; L^4)} \leq L_{r,R},
	\end{equation*}
	where $L_{r,R}$ is a constant depending on $r$ and $R$. Combining this with (\ref{continuity estimate}), we obtain (\ref{continuous1}).
\end{proof}

\subsection{ULDP for the stochastic Burgers equations}\label{sec: ULDP}
In this section, we prove that the family $\left\{\mathcal{L} (u_\epsilon )\right\}_{\epsilon > 0}$ satisfies the uniform large deviations principle in $L^\infty ([0,T];H)$. We first recall the Freidlin-Wentzell formulation of uniformly large deviations principle. This definition can also be found in \cite{freidlin2012random} and \cite{salins2019equivalences}.
\begin{definition}
   \label{ULDPFW}
   Let $E$ be a Banach space and let $D$ be some non-empty set. Suppose that for each $x \in D$, $\{\mu^x _\epsilon \}_{\epsilon > 0}$ is a family of probability measures on $E$ and the action function $I^x:E\rightarrow [0,+\infty]$ is a good rate function. The family $\{\mu^x_\epsilon \}_{\epsilon > 0}$ is said to satisfy a Freidlin-Wentzell uniform large deviation principle (FWULDP) in $E$, with rate function $I^x$, uniformly with respect to $x \in D$, if the following hold.
   \item[(1)](Lower bound) For every $r \geq  0$, $\delta > 0$ and $\gamma > 0$, there exists $\epsilon _0 > 0$ such that
   \begin{equation*}
	   \inf_{x \in D}\left(\mu^x _\epsilon \left(B_E(\varphi, \delta ) \right) -  \exp \left(-\frac{I^x(\varphi) + \gamma }{\epsilon }\right) \right) \geq 0,
   \end{equation*}
   for any $\epsilon \leq \epsilon _0$ and $\varphi  \in \Phi^x(r)$, where $\Phi^x(r) := \{h \in E: I^x(h) \leq r\}$.
   \item[(2)] (Upper bound) For every $r_0 \geq 0$, $\delta > 0$ and $\gamma > 0$, there exists $\epsilon _0 > 0$ such that 
   \begin{equation*}
	   \sup_{x \in D}\mu^x _\epsilon \left(B^c_E(\Phi^x(r), \delta ) \right) \leq \exp \left(-\frac{r-\gamma }{\epsilon }\right),
   \end{equation*}
   for any $\epsilon \leq  \epsilon _0$ and $r \leq r_0$, where $B^c_E(\Phi^x(r), \delta ) := \left\{h \in E: dist_E(h,\Phi^x(r)) \geq \delta \right\}$.
\end{definition}
In \cite{cerrai2022large}, the Cerrai-Paskal proved the FWULDP for stochastic Navier-Stokes equations. They achieved that by first using the weak convergence approach developed in Budhiraja, Dupuis and Maroulas\cite{budhiraja2008large} to prove the large deviations principle for the stochastic convolution $Z_\epsilon $. Then, they used a uniform contraction principle to prove the Freidlin-Wentzell uniformly large deviations principle in a bounded subset. In order to apply the uniform contraction principle, an exponential estimate to the law of stochastic convolution is needed. But this is difficult to obtain in our case since the noise operator $Q_{\epsilon }$ could be non-trace class and Itô's formula is not applicable. Here, we use a different approach to prove FWULDP by proving the Equicontinuous Uniform Laplace Principle (EULP) (see Theorem 2.10 of \cite{salins2019equivalences}).

Let $\mathcal{P} _2$ be the collection of $\mathcal{F} _t$ adapted $H$-valued processes $f(t)$ with $ \mathbb{P} \left(\| f\Vert _{L^2([0,T];H)} < +\infty\right) =1$. Let $\mathcal{S}^N = \left\{f \in L^2([0,T];H):\| f\Vert _{L^2([0,T];H)}^2 \leq N\right\}$. Let $\mathcal{P} _2^N$ be the collection of $\mathcal{F} _t$-adapted $H$-valued processes such that $\mathbb{P} \left(f \in \mathcal{S} ^N\right)=1$. For $f \in \mathcal{P} _2$, we denote by $u_{\epsilon ,f}^x$ the solution to the equation 
\begin{equation*}
	\begin{cases}
		du_{\epsilon ,f}^x(t) + Au_{\epsilon ,f}^x(t)dt = \frac{1}{2}D\left[u_{\epsilon ,f}^x(t)^2\right] dt+\sqrt{\epsilon Q_{\epsilon}}dW(t) + \sqrt{ Q_{\epsilon}}f(t)dt,\\
		u_{\epsilon ,f}^x(0) = x,
	\end{cases}
\end{equation*}
and we denote $u_f$ to be the solution of skeleton equation
\begin{equation}
	\label{skt eq1}
	\begin{cases}
	du_f^x(t) + Au_f^x(t)dt = \frac{1}{2}D\left(u_f^x(t)^2\right) dt +\sqrt{Q_0}f(t)dt, \\
	u_f^x(0) = x,
	\end{cases}	
\end{equation}
where $\sqrt{Q_0}$ is defined as $$\sqrt{Q_0}e_k = \sigma _{0,k}e_k, \ k \in \mathbb{N}. $$
Define $\mathcal{F} ^\epsilon_x  $ to be the measurable mapping that maps $W(\cdot)$ to the mild solution of the equation
\begin{equation*}
	du(t) + Au(t)dt = \frac{1}{2}D\left(u^2(t)\right)dt + \sqrt{Q_\epsilon }dW(t),\ u(0) = x.
\end{equation*}
Then $u_{\epsilon ,f}^x$ can be written as
\begin{equation*}
	\mathcal{F}^\epsilon_x \left(\sqrt{\epsilon }\left(W + \frac{1}{\sqrt{\epsilon }}\int_{0}^{\cdot} f(s) \,ds \right)\right) = \left(I + \mathcal{G} _x\right)\left(Z_{\epsilon ,f}\right),
\end{equation*}
where $Z_{\epsilon ,f}$ is the solution the the equation 
\begin{equation*}
	\begin{cases}
		dZ_{\epsilon ,f}(t) + AZ_{\epsilon ,f}(t)dt = \sqrt{\epsilon Q_{\epsilon}}dW(t) + \sqrt{ Q_{\epsilon}}f(t)dt,\\
		Z_{\epsilon ,f}(0) = 0.
	\end{cases}
 \end{equation*}
Moreover, $u_f^x$ can be written as 
$$\mathcal{F}^0_x \left(\int_{0}^{\cdot} f(s) \,ds \right) = \left(I + \mathcal{G} _x\right)\left(Z_f\right), $$
where $Z_f$ is the solution the the equation 
\begin{equation*}
	\begin{cases}
		dZ_f(t) + AZ_f(t)dt = \sqrt{ Q_{0}}f(t)dt,\\
		Z_f(0) = 0.
	\end{cases}
 \end{equation*}
For any operator $Q$ satisfies $\sqrt{Q}e_k = \sigma _k e_k ,\ k \in \mathbb{N} $, we define a mapping from $L^2([0,T];H)$ to $C([0,T];L^4)$ by $${\cal R}(f)(t): = \int_{0}^{t} S(t-s)\sqrt{Q}f(s) \,ds.$$ We have the following proposition.
\begin{proposition}
	\label{compact1}
	Assume $\sup_{k \in \mathbb{N} }\frac{\sigma^2 _k}{k^\theta }< \infty$ for some $\theta < \frac{3}{2}$, then the map ${\cal R}$ defined above is a compact operator from $L^2([0,T];H)$ into $C([0,T];L^4)$.
\end{proposition}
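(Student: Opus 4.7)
The plan is to factor $\mathcal{R}$ through a space of H\"older continuous curves taking values in a Sobolev space $H^r$ with $r > 1/4$, exploit the compact embedding $H^r \hookrightarrow L^4$ at each fixed time, and then finish by an Arzel\`a--Ascoli argument in $C([0,T];L^4)$.

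The key ingredient is a smoothing estimate for $A^{r/2} S(t)\sqrt{Q}$. Since this operator is diagonal in $\{e_k\}$ with eigenvalues $(k\pi)^r e^{-k^2\pi^2 t}\sigma_k$, the hypothesis $\sigma_k^2 \leq C k^\theta$ combined with an optimisation in $k$ of $x\mapsto x^{r+\theta/2} e^{-x^2 \pi^2 t}$ yields
\[
\|A^{r/2} S(t) \sqrt{Q}\|_{\mathcal{L}(H)} \leq C\, t^{-(r+\theta/2)/2}, \qquad t > 0.
\]
Inserting this into the definition of $\mathcal{R}(f)(t)$ and applying Cauchy--Schwarz in time produces the bound $\|\mathcal{R}(f)(t)\|_{H^r} \leq C_{T,r}\, \|f\|_{L^2([0,T];H)}$, valid whenever the singularity is integrable, i.e.\ whenever $r+\theta/2 < 1$. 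Since $\theta < 3/2$, the interval $(1/4,\, 1-\theta/2)$ is nonempty, and I choose $r$ in it; then the one-dimensional Sobolev embedding $H^r \hookrightarrow L^4$ is compact.

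For equicontinuity in time I would split, for $0\leq s < t \leq T$,
\[
\mathcal{R}(f)(t) - \mathcal{R}(f)(s) = \int_s^t S(t-u)\sqrt{Q}\, f(u)\, du + \bigl(S(t-s) - I\bigr) \int_0^s S(s-u)\sqrt{Q}\, f(u)\, du.
\]
The first piece is handled exactly as above on the short interval $[s,t]$, producing a bound of order $|t-s|^{(1-r'-\theta/2)/2}$ in $H^{r'}$. For the second, I pick $r' \in (1/4, r)$ and invoke the standard semigroup estimate $\|(S(t-s)-I) w\|_{H^{r'}} \leq C |t-s|^{(r-r')/2} \|w\|_{H^r}$ applied to $w = \int_0^s S(s-u)\sqrt{Q} f(u)\,du$, whose $H^r$-norm is already controlled. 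Together these produce a H\"older estimate
\[
\|\mathcal{R}(f)(t) - \mathcal{R}(f)(s)\|_{H^{r'}} \leq C\, |t-s|^\beta\, \|f\|_{L^2([0,T];H)}
\]
with some exponent $\beta > 0$.

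Finally, given any bounded family $\{f_n\} \subset L^2([0,T];H)$, the images $\{\mathcal{R}(f_n)(t)\}$ are precompact in $L^4$ at every fixed $t$ (compact embedding $H^{r'} \hookrightarrow L^4$) and equicontinuous as curves $[0,T] \to L^4$ (via the H\"older estimate and $\|\cdot\|_{L^4} \leq C\|\cdot\|_{H^{r'}}$), so Arzel\`a--Ascoli gives precompactness in $C([0,T];L^4)$. The main obstacle is to calibrate $r$ and $r'$ so that they simultaneously exceed the Sobolev threshold $1/4$ (required for compact embedding into $L^4$) and stay below $1-\theta/2$ (required for integrability of the heat-kernel singularity), and this is precisely the window opened up by the assumption $\theta < 3/2$.
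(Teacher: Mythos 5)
Your proof is correct, and it takes a genuinely different route from the paper's. The paper invokes the factorization formula, writing $\mathcal{R}(f) = R_\alpha(\mathcal{Y}_\alpha(f))$ with $\alpha = 1/2 - \theta/4$, proving $\mathcal{Y}_\alpha(f) \in L^\infty([0,T];H)$ by a per-mode Cauchy--Schwarz computation, and then delegating the time-regularity to the abstract result that $R_\alpha$ maps $L^p([0,T];H)$ into $C^{\alpha-\gamma-1/p}([0,T];D(A^\gamma))$ (Proposition A.1.1 of Da Prato--Zabczyk); the conclusion is a uniform bound in $C^\delta([0,T];H^{2\gamma})$ with $2\gamma>1/4$, and compactness follows by Rellich--Kondrachov. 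You bypass factorization entirely: the smoothing estimate $\|A^{r/2}S(t)\sqrt{Q}\|_{\mathcal{L}(H)} \lesssim t^{-(r+\theta/2)/2}$ (which is correct, by optimizing $x \mapsto x^{r+\theta/2}e^{-\pi^2 x^2 t}$), Cauchy--Schwarz in time, the split $\mathcal{R}(f)(t)-\mathcal{R}(f)(s) = \int_s^t + (S(t-s)-I)\int_0^s$ for increments, and Arzel\`a--Ascoli with pointwise precompactness from $H^{r'}\hookrightarrow L^4$. Both arguments hinge on the same nonempty window $1/4 < r < 1-\theta/2$, which is exactly where $\theta < 3/2$ enters. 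Your approach is more self-contained (no appeal to the factorization machinery) at the cost of re-deriving the H\"older estimate by hand; the paper's approach is slicker once the abstract lemma is available, and its per-mode estimate is slightly sharper than your uniform operator-norm bound, though neither extracts any gain from that since the $\theta$-threshold is identical.
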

\begin{proof}
	To prove the compactness of ${\cal R}$, we need to show that for all $M \in [0,\infty)$ and $f$ satisfying
	\begin{equation}
		\label{M bound}
		\frac{1}{2}\int_{0}^{T} \| f(t)\Vert _H^2 \,dt \leq M
	\end{equation}
	the set
	$$K_Q(M) := \{{\cal R}(f)\}$$ is a compact set in $C([0,T];L^4)$.

	Using a factorization formula, we rewrite $Z_f(t) = \int_{0}^{t} S(t-s)\sqrt{Q}f(s) \,ds$ as $Z_f = R_\alpha\left(\mathcal{Y} _\alpha (f)\right),\ \alpha  \in (0,1)$ where
	$${\cal R}_\alpha (\mathcal{Y} )(t):= C_\alpha \int_{0}^{t} (t-s)^{\alpha -1}S(t-s)\mathcal{Y} (s) \,ds,$$ $C_\alpha = \frac{\sin\alpha \pi}{\pi} $ and 
	$$\mathcal{Y} _\alpha (f)(s):= \int_{0}^{s} (s-r)^{-\alpha }S(s-r)\sqrt{Q}f(r) \,dr.$$ 
	It is well-known that ${\cal R}_\alpha$ is a linear bounded operator from $L^p([0,T];H)$ into $C^{\alpha -\gamma -\frac{1}{p}}([0,T];D(A^{\gamma }))$, where $\alpha-\gamma -\frac{1}{p} >0$ (see Proposition A.1.1, \cite{da1996ergodicity}). Choosing $\alpha = 1/2 - \theta /4 > 1/8$, we next show that $\mathcal{Y} _\alpha (f) \in L^\infty([0,T];H)$.

	By (\ref{M bound}) we have $$\left[\int_{0}^{T} \| f(t)\Vert _H^2 \,dt\right]^{1/2} = \left[\sum_{k \in \mathbb{N} }\int_{0}^{T} f_k^2(t) \,dt \right]^{1/2} \leq \sqrt{2M},$$ where $f_k(t)$ denote the $k$-th coordinate of $f(t)$, that is $f_k(t)=\langle f(t), e_k\rangle $. Then for every $s \in [0,T]$,
	\begin{align*}
		&\ \ \ \ \| \mathcal{Y} _\alpha (f)(s)\Vert _H \\&= \left \langle \sum_{k \in \mathbb{N} }\sigma _{k}e_k\int_{0}^{s} (s-r)^{-\alpha }e^{-\pi^2k^2(s-r)}f_k(r) \,dr, \sum_{k \in \mathbb{N} }\sigma _{k}e_k\int_{0}^{s} (s-r)^{-\alpha }e^{-\pi^2k^2(s-r)}f_k(r) \,dr\right\rangle^{1/2} \\&= \left[\sum_{k \in \mathbb{N} }\sigma _{k}^2\left(\int_{0}^{s} r^{-\alpha }e^{-\pi^2k^2r}f_k(s-r) \,dr \right)^2\right]^{1/2}\\ &\leq \left[\sum_{k \in \mathbb{N} }\sigma _{k}^2\int_{0}^{s} r^{-2\alpha }e^{-2\pi^2k^2r} \,dr \int_{0}^{s} f_k^2(r) \,dr  \right]^{1/2} \\ &\leq \left[C\sum_{k \in \mathbb{N} }\frac{\sigma _{k}^2}{k^{2(1-2\alpha) }}\int_{0}^{s} f_k^2(r) \,dr\right]^{1/2} \\ &= \left[C\sum_{k \in \mathbb{N}}\frac{\sigma^2 _{k}}{k^\theta }\int_{0}^{s} f_k^2(r) \,dr\right]^{1/2} \leq \sqrt{2C_1M},
	\end{align*}
 where $ \ C_1 = C\sup_{k \in \mathbb{N} }\frac{\sigma^2 _{k}}{k^\theta }< \infty $.
	Hence $\mathcal{Y} _\alpha (f) \in L^\infty([0,T];H)$ for $\alpha = 1/2 - \theta /4 > 1/8$. We choose $\gamma>1/8$ and $p$ large enough such that $\alpha -\gamma -1/p=\delta >0$. Then \begin{equation}\label{inequality: Z}
	    \| Z_f\Vert _{C^\delta ([0,T];H^{2\gamma })} \leq C_\delta \sqrt{2C_1M},	\end{equation} 
     thus $\mathcal{R}$ is a bounded linear operator from $L^2([0,T];H)$ into $C^\delta ([0,T];H^{2\gamma })$. According to Rellich-Kondrachov theorem, $H^s$ is compact embedded into $L^4$ for $s>1/4$. Therefore, $K_Q(M)$ is a bounded closed set in $C^\delta ([0,T];H^{2\gamma })$ and thus a compact set in $C([0,T];L^4)$.
\end{proof}

To state the following results, we give some assumptions about the noise.
\begin{condition}
	\label{assumption 1}
	The eigenvalues of operator $\sqrt{Q_\epsilon}$ satisfy $$\text{for each} \ \epsilon > 0, \ \sum_{k \in \mathbb{N}}\frac{\sigma^2 _{\epsilon ,k}}{k^2} < \infty \ \text{and}\  \lim_{\epsilon \rightarrow 0}\epsilon \sum_{k \in \mathbb{N}}\frac{\sigma^2 _{\epsilon ,k}}{k^2} = 0.$$ Moreover, for any $N>0$, $$\lim_{\epsilon \rightarrow 0}\sqrt{Q_0}^{-1}\sqrt{Q_\epsilon }f= f \ \text{weakly in}\  L^2([0,T];H), \text{uniformly with respect to } \  \| f\Vert _{L^2([0,T];H)}^2 \leq N,$$ where $\sqrt{Q_0}$ satisfies $\sup_{k \in \mathbb{N} }\frac{\sigma^2 _{0,k}}{k^\theta }< \infty$ for some $\theta < \frac{3}{2}$.
\end{condition}
\begin{example}
	Suppose that $\sqrt{Q_0} \in L(H)$ and
	$$\lim_{\epsilon \rightarrow 0 }\sqrt{Q_\epsilon} = \sqrt{Q_0}$$ in $L(H)$, then Assumption \ref{assumption 1} is satisfied.
\end{example}

\begin{example}
	Set $\sqrt{Q_\epsilon} = A^{1/4-\delta (\epsilon )}$ where $\delta (\epsilon )>0$. We have $$\sigma _{\epsilon ,k} = (k\pi)^{1/2-2\delta (\epsilon )}, \ k \in \mathbb{N}. $$ Suppose $\lim_{\epsilon \rightarrow 0}\delta (\epsilon ) = 0$ and $\lim_{\epsilon \rightarrow 0}\frac{\epsilon }{\delta (\epsilon )} = 0$, then 
	$$\lim_{\epsilon \rightarrow 0}A^{-1/4}\sqrt{Q_\epsilon }= A^{-1/4}\sqrt{Q_0} \ \text{in}\  L(H).$$ Moreover, 
	\begin{align*}
		\lim_{\epsilon \rightarrow 0}\epsilon \sum_{k \in \mathbb{N}}\frac{\sigma^2 _{\epsilon ,k}}{k^2} &= \lim_{\epsilon \rightarrow 0}\epsilon \sum_{k \in \mathbb{N}}\frac{\pi^2}{(k\pi)^{1+ 4\delta  (\epsilon )}} \\&\leq \lim_{\epsilon \rightarrow 0}\epsilon\int_{1}^{+ \infty} \frac{\pi^2}{(\pi x)^{1+ 4\delta  (\epsilon )}} \,dx \\ & \leq \lim_{\epsilon \rightarrow 0}\frac{\pi\epsilon }{4\delta (\epsilon )} = 0.
	\end{align*}
	Thus, Assumption \ref{assumption 1} is satisfied.
\end{example}
\begin{lemma}
	\label{LDPZ}
	Under Assumption \ref{assumption 1}, for any $N >0$ and $\delta >0$
	$$\lim_{\epsilon  \rightarrow 0}\sup_{f \in \mathcal{P} _2^N}\mathbb{P} \left(\|Z_{\epsilon ,f} - Z_f\Vert _{C([0,T];L^4)} > \delta \right) =0.$$
\end{lemma}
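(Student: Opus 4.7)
The plan is to decompose $Z_{\epsilon,f}-Z_f$ into a purely stochastic part and a purely deterministic ($f$-dependent) part, and to control each independently. Writing
\[
\tilde Z_\epsilon(t):=\int_0^t S(t-s)\sqrt{\epsilon Q_\epsilon}\,dW(s),\qquad
\mathcal{R}_\epsilon f(t):=\int_0^t S(t-s)\sqrt{Q_\epsilon}f(s)\,ds,
\]
we have $Z_{\epsilon,f}-Z_f=\tilde Z_\epsilon+(\mathcal{R}_\epsilon-\mathcal{R}_0)f$, and a union bound reduces the statement to (i) $\tilde Z_\epsilon\to 0$ in probability in $C([0,T];L^4)$, independently of $f$, and (ii) the deterministic quantity $\eta_\epsilon:=\sup_{f\in\mathcal{S}^N}\|(\mathcal{R}_\epsilon-\mathcal{R}_0)f\|_{C([0,T];L^4)}$ tends to $0$.

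For (i), the Burkholder-Davis-Gundy argument used in the proof of Proposition~\ref{wellposedness of SBE} with $p=4$ yields
\[
\mathbb{E}\sup_{0\le t\le T}\|\tilde Z_\epsilon(t)\|_{L^4}^4\le c_4\Bigl(\epsilon\sum_{k\in\mathbb{N}}\frac{\sigma_{\epsilon,k}^2}{k^2}\Bigr)^{2},
\]
and the right-hand side vanishes by the second condition in Assumption~\ref{assumption 1}. Chebyshev's inequality then gives $\mathbb{P}(\|\tilde Z_\epsilon\|_{C([0,T];L^4)}>\delta/2)\to 0$ with a rate that does not depend on $f$.

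For (ii), set $g_\epsilon^f:=\sqrt{Q_0}^{-1}\sqrt{Q_\epsilon}f$, so that $(\mathcal{R}_\epsilon-\mathcal{R}_0)f=\mathcal{R}_0(g_\epsilon^f-f)$. The last condition in Assumption~\ref{assumption 1} is precisely that $g_\epsilon^f-f\rightharpoonup 0$ in $L^2([0,T];H)$ uniformly over $f\in\mathcal{S}^N$, while $\sup_k\sigma_{0,k}^2/k^\theta<\infty$ with $\theta<3/2$ makes Proposition~\ref{compact1} applicable, so that $\mathcal{R}_0\colon L^2([0,T];H)\to C([0,T];L^4)$ is compact. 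I would promote pointwise strong convergence to uniform strong convergence by contradiction: if $\eta_\epsilon\not\to 0$, extract $\epsilon_n\downarrow 0$ and $f_n\in\mathcal{S}^N$ with $\|\mathcal{R}_0 h_n\|_{C([0,T];L^4)}\ge\delta_0$, where $h_n:=g_{\epsilon_n}^{f_n}-f_n$ is bounded in $L^2([0,T];H)$. Applying the uniformity in Assumption~\ref{assumption 1} to the sequence $(f_n)\subset\mathcal{S}^N$ gives, for every test $\phi\in L^2([0,T];H)$,
\[
|\langle h_n,\phi\rangle|\le\sup_{f\in\mathcal{S}^N}\bigl|\langle(\sqrt{Q_0}^{-1}\sqrt{Q_{\epsilon_n}}-I)f,\phi\rangle\bigr|\longrightarrow 0,
\]
that is $h_n\rightharpoonup 0$ weakly in the reflexive space $L^2([0,T];H)$; compactness of $\mathcal{R}_0$ on the bounded sequence $(h_n)$ then forces $\mathcal{R}_0 h_n\to 0$ strongly in $C([0,T];L^4)$, contradicting the lower bound.

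To combine (i) and (ii), note that $f\in\mathcal{P}_2^N$ takes values in $\mathcal{S}^N$ almost surely, so $\|(\mathcal{R}_\epsilon-\mathcal{R}_0)f\|_{C([0,T];L^4)}\le\eta_\epsilon$ a.s.; taking $\epsilon$ small enough that $\eta_\epsilon<\delta/2$, the deterministic part contributes nothing and
\[
\sup_{f\in\mathcal{P}_2^N}\mathbb{P}\bigl(\|Z_{\epsilon,f}-Z_f\|_{C([0,T];L^4)}>\delta\bigr)\le\mathbb{P}\bigl(\|\tilde Z_\epsilon\|_{C([0,T];L^4)}>\tfrac{\delta}{2}\bigr)\xrightarrow{\epsilon\to 0}0.
\]
The main obstacle is step (ii): turning the uniform weak convergence from Assumption~\ref{assumption 1} into uniform strong convergence through the compactness of $\mathcal{R}_0$ from Proposition~\ref{compact1}. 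This is exactly where both pieces of Assumption~\ref{assumption 1} interact with Proposition~\ref{compact1}, and where the reflexivity-plus-contradiction step (rather than a naive pointwise limit) is essential.
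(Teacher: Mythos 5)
Your proof takes essentially the same route as the paper: the same decomposition of $Z_{\epsilon,f}-Z_f$ into the stochastic convolution $I_1^\epsilon=\sqrt{\epsilon}\int_0^t S(t-s)\sqrt{Q_\epsilon}\,dW(s)$ and the deterministic term $I_2^\epsilon=\mathcal{R}_{Q_0}\bigl(\sqrt{Q_0}^{-1}\sqrt{Q_\epsilon}f-f\bigr)$, the same Burkholder--Davis--Gundy bound for $I_1^\epsilon$, and the same ``compact operator sends weak convergence to strong convergence'' mechanism from Proposition~\ref{compact1} for $I_2^\epsilon$. The one place you go further is in spelling out, via the contradiction/subsequence argument, how the uniform-in-$f$ weak convergence of Assumption~\ref{assumption 1} combined with compactness of $\mathcal{R}_{Q_0}$ yields uniform strong convergence of $I_2^\epsilon$ over $\mathcal{S}^N$ --- the paper leaves this promotion-to-uniformity step implicit, and your explicit treatment is a welcome tightening rather than a different method.
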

\begin{proof}
	
	
	For every $t \geq 0$ and $f \in \mathcal{P}_2^N$ , we have 
	\begin{align*}
		Z_{\epsilon,f}(t) - Z_{f}(t) &= \sqrt{\epsilon }\int_{0}^{t} S(t-s)\sqrt{Q_\epsilon} \,dW(s) + \int_{0}^{t} S(t-s)\left[\sqrt{Q_\epsilon}f(s) - \sqrt{Q_0}f(s)\right] \,ds \\
		&=: I_1^\epsilon(t) + I_2^\epsilon(t).  
	\end{align*}
	Following from the first part of proof of the Proposition \ref{wellposedness of SBE}, we have 
	\begin{align*}
		\lim_{\epsilon\rightarrow 0}\mathbb{E} \| I_1^\epsilon\Vert ^4_{C([0,T];L^4)} &= \lim_{\epsilon\rightarrow 0}\mathbb{E} \sup_{0 \leq t \leq T}\| Z_\epsilon (t)\Vert _{L^4}^4\\ &\leq c_4\lim_{\epsilon\rightarrow 0}\left(\epsilon \sum_{k \in \mathbb{N}}\frac{\sigma^2 _{\epsilon ,k}}{(k\pi)^2}\right)^{2}\\ &= 0.
	\end{align*}
	
	For an estimate of $I_2^\epsilon(t)$, by Proposition \ref{compact1}, the operator ${\cal R}_{Q_0}(f)(t) = \int_{0}^{t} S(t-s)\sqrt{Q_0} f(s)\,ds$ is a compact operator from $L^2([0,T];H)$ into $C([0,T];L^4)$. it then follows from

	\begin{align*}
		I_2^\epsilon(t) &= \int_{0}^{t} S(t-s)\left[\sqrt{Q_\epsilon}f(s) - \sqrt{Q_0}f(s)\right] \,ds\\
		&=\int_{0}^{t} S(t-s)\sqrt{Q_0}\left[\sqrt{Q_0}^{-1}\sqrt{Q_\epsilon}f(s) - f(s)\right] \,ds \\&= \mathcal{R}_{Q_0}\left(\sqrt{Q_0}^{-1}\sqrt{Q_\epsilon}f - f\right)(t).
	\end{align*}
	and the fact that compact operators map weakly convergent sequences to strongly convergent sequences that
	$$\lim_{\epsilon\rightarrow 0} \left\| I_2^\epsilon\right\Vert _{C([0,T];L^4)} = 0, \ \mathbb{P} - a.s.$$ The result follows.
\end{proof}

Next, we study the rate function of stochastic Burgers equation. Assume $T >0$, given a function $u \in L^\infty([0,T];H)$, we say that
	$$Q_0^{-1/2}(\partial_tu + Au - \frac{1}{2}D(u^2)) \in L^2([0,T];H),$$ if there exists $f \in L^2([0,T];H)$ such that $u$ is the mild solution of Burgers equation
	\begin{equation}
		\begin{cases}
			\partial_tu + Au(t) - \frac{1}{2}D(u^2(t))) = \sqrt{Q_0}f,\\
		u(0) = x \in H.
		\end{cases}
	\end{equation}
Clearly, the corresponding function $f$ is unique and we will denote it by $Q_0^{-1/2}\mathcal{H} (u)$, where
$$\mathcal{H} (u):= \partial _tu + Au - \frac{1}{2}D(u^2).$$
For any $x \in H$ and $u \in L^{\infty}([0,T];H)$, we define the rate function
$$I^x(u) := \begin{cases}
	\frac{1}{2}\int_{0}^{T} \| Q_0^{-1/2}\mathcal{H} (u)(t)\Vert_H^2  \,dt & \text{if} \ Q_0^{-1/2}\mathcal{H}(u) \in L^2([0,T];H) \ \text{and} \ u(0) = x,\\
	+ \infty &\text{otherwise}.
\end{cases}$$
\begin{theorem}
	\label{LDPu}
	Under Assumption \ref{assumption 1}, if $u_\epsilon^x$ is the mild solution of Eq. (\ref{sbe2}), then the family $\left\{\mathcal{L} (u_\epsilon^x)\right\}_{\epsilon > 0}$ satisfies the Freidlin-Wentzell uniform large deviations principle in $L^\infty([0,T];H)$ with a good rate function $I^x$, uniformly with respect to $x \in B_H(R)$. 
\end{theorem}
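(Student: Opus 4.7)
The plan is to establish the FWULDP via the equicontinuous uniform Laplace principle (EULP) of Salins \cite{salins2019equivalences}, Theorem 2.10, which is equivalent to Definition \ref{ULDPFW}. The key structural fact is the factorization $u_\epsilon^x = (I+\mathcal{G}_x)(Z_\epsilon)$ and $u_f^x = (I+\mathcal{G}_x)(Z_f)$, which reduces the problem to an LDP for the Gaussian process $Z_\epsilon$ contracted through the deterministic, uniformly continuous map $\mathcal{G}_x$. The variational form of the rate function suggested by this contraction is $I^x(u) = \inf\{\tfrac{1}{2}\|f\|_{L^2([0,T];H)}^2 : u = u_f^x\}$; because $u_f^x - Z_f$ solves (\ref{YM}) with $Z_\epsilon$ replaced by $Z_f$ and $Z_f$ satisfies $\partial_t Z_f + A Z_f = \sqrt{Q_0} f$, one verifies directly that $u_f^x$ is the mild solution of $\partial_t u + Au - \tfrac{1}{2} D(u^2) = \sqrt{Q_0} f$ with $u(0) = x$, matching the formula for $I^x$ in the statement.

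For the goodness of $I^x$ uniformly in $x$, observe that $\Phi^x(r)$ is the image of the closed ball $\{f : \tfrac{1}{2}\|f\|^2 \leq r\}$ in $L^2([0,T];H)$ under $f \mapsto (I+\mathcal{G}_x)(Z_f)$. By Proposition \ref{compact1}, $f \mapsto Z_f$ sends this ball to a compact subset of $C([0,T];L^4)$, and Lemma \ref{continuous lemma} provides a Lipschitz constant for $\mathcal{G}_x$ from bounded sets of $L^4([0,T];L^4)$ into $L^\infty([0,T];H)$ that is uniform for $x \in B_H(R)$. Composing yields compactness of $\Phi^x(r)$ in $L^\infty([0,T];H)$ and equicontinuity of $x \mapsto \Phi^x(r)$, which are exactly the hypotheses on the rate function required by the EULP.

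The analytic core of the proof is the weak-convergence condition: for every $N,\delta > 0$ and all sequences $f_\epsilon \in \mathcal{P}_2^N$ and $x_\epsilon \in B_H(R)$,
\begin{equation*}
\lim_{\epsilon \to 0} \mathbb{P}\bigl(\|u_{\epsilon,f_\epsilon}^{x_\epsilon} - u_{f_\epsilon}^{x_\epsilon}\|_{L^\infty([0,T];H)} > \delta\bigr) = 0.
\end{equation*}
Writing $u_{\epsilon,f_\epsilon}^{x_\epsilon} - u_{f_\epsilon}^{x_\epsilon} = (Z_{\epsilon,f_\epsilon} - Z_{f_\epsilon}) + (\mathcal{G}_{x_\epsilon}(Z_{\epsilon,f_\epsilon}) - \mathcal{G}_{x_\epsilon}(Z_{f_\epsilon}))$, the first term is handled by Lemma \ref{LDPZ} via the embedding $C([0,T];L^4) \hookrightarrow L^\infty([0,T];H)$. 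For the second, the deterministic bound (\ref{inequality: Z}) and the stochastic-convolution estimate in the proof of Proposition \ref{wellposedness of SBE} ensure that both $Z_{\epsilon,f_\epsilon}$ and $Z_{f_\epsilon}$ lie in a fixed ball of $L^4([0,T];L^4)$ with probability tending to $1$ uniformly in $f_\epsilon \in \mathcal{P}_2^N$, so the uniform Lipschitz estimate (\ref{continuous1}) converts convergence in $L^4([0,T];L^4)$ into convergence in $L^\infty([0,T];H)$ uniformly in $x_\epsilon \in B_H(R)$.

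The hard part is that no exponential estimate on $Z_\epsilon$ is available: because $Q_\epsilon$ may be non-trace-class, It\^o's formula cannot be applied and the classical uniform contraction principle route of \cite{cerrai2022large} breaks down. The EULP framework is chosen precisely because it only needs convergence in probability of $Z_{\epsilon,f_\epsilon} - Z_{f_\epsilon}$, delivered by Lemma \ref{LDPZ}, coupled with the purely deterministic uniform Lipschitz bound from Lemma \ref{continuous lemma}; the uniformity in $x \in B_H(R)$ rides entirely on this Lipschitz bound being independent of $x$ in that ball.
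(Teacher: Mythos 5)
Your proposal is correct and follows essentially the same route as the paper's proof: both establish the FWULDP via Salins's EULP characterization (Theorem 2.10 of \cite{salins2019equivalences}), use the factorization $u^x_{\epsilon,f}=(I+\mathcal{G}_x)(Z_{\epsilon,f})$ to split the required convergence in probability into a linear piece handled by Lemma \ref{LDPZ} and a nonlinear piece handled by the uniform Lipschitz bound of Lemma \ref{continuous lemma}, and derive goodness of $I^x$ from the compactness given by Proposition \ref{compact1} composed with the continuity of $I+\mathcal{G}_x$. The only cosmetic difference is that the paper makes the ``high-probability fixed ball'' step explicit via the auxiliary truncation constant $M$, while you state it informally as ``lie in a fixed ball with probability tending to 1''; the content is the same.
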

\begin{proof}
	Since $u_{\epsilon ,f}^x = (I + \mathcal{G} _x)(Z_{\epsilon ,f})$ and $u_f^x = (I + \mathcal{G} _x)(Z_f) $, it follows that for any $N>0$ and $\delta >0$
	\begin{align*}
	&\ \ \sup_{x \in B_H(R)}\sup_{f \in \mathcal{P} _2^N}\mathbb{P} \left(\|u_{\epsilon ,f}^x - u_f^x\Vert _{L^\infty([0,T];H)} > \delta \right) \\ &\leq \sup_{f \in \mathcal{P} _2^N}\mathbb{P} \left(\|Z_{\epsilon ,f} - Z_f\Vert _{C([0,T];L^4)} > \delta/2 \right) \\ &\ \ + \sup_{x \in B_H(R)}\sup_{f \in \mathcal{P} _2^N}\mathbb{P} \left(\|\mathcal{G} _x(Z_{\epsilon ,f}) - \mathcal{G} _x(Z_f)\Vert _{L^\infty([0,T];H)} > \delta/2 \right).
    \end{align*}
    According to Lemma \ref{LDPZ}, the first term converges to zero. For the second term, since trivially $$\sqrt{2}\exp\left(C_2\int_{0}^{T} \| \psi(s) \Vert _{L^4}^{8/3} \,ds \right)\| \psi \Vert _{L^4([0,T]; L^4)} \leq \exp\left(C_T + \| \psi\Vert _{L^4([0,T];L^4)}^4 \right),$$ by (\ref{continuity estimate}), it follows that
	\begin{align*}
		\mathbb{P} \left(\|\mathcal{G} _x(Z_{\epsilon ,f}) - \mathcal{G} _x(Z_f)\Vert _{L^\infty([0,T];H)} > \delta/2 \right) &\leq \mathbb{P} \left(\| \psi^x \Vert _{L^4([0,T];L^4)}>\left(\ln M - C_T\right)^{1/4}\right) \\ &\ \ \ \ + \mathbb{P} \left(\| Z_{\epsilon, f}-Z_f\Vert _{C([0,T];L^4)} > \frac{\delta }{2M}\right),
	\end{align*}
	where $\psi^x = Z_{\epsilon ,f} + Z_f + \mathcal{G} _x(Z_{\epsilon ,f}) + \mathcal{G} _x(Z_f)$ and $M$ is some positive constant. By interpolation inequality and the Poincar\'e inequality, there exits a continuous function $C(\cdot,\cdot)$ on $[0,+ \infty) \times [0,+ \infty)$ such that
	\begin{align*}
		\| \mathcal{G} _x(Z_f)\Vert _{L^4([0,T];L^4)} &\leq C_1\left(\|\mathcal{G} _x(Z_f)\Vert _{L^\infty([0,T];H)}\cdot\| \mathcal{G} _x(Z_f)\Vert _{L^2([0,T];H^1)} \right)^{1/2} \\ &\leq C\left(\| Z_f\Vert _{C([0,T];L^4)}, \| x\Vert _H\right),
	\end{align*}
	where the second inequality is implied by (\ref{Y estimate 1}) and (\ref{Y estimate 2}). According to Proposition \ref{compact1}, there exits a positive constant $C^\prime$ such that $$\| Z_f\Vert_{L^4([0,T];L^4)} \leq T^{1/4}\| Z_f\Vert_{C([0,T];L^4)} \leq C^\prime \sqrt{N}, \ \mathbb{P} - a.s.$$ for every $f \in \mathcal{P} _2^N$. Thus, for every $f \in \mathcal{P} _2^N$ and $x \in B_H(R)$, $$\| \mathcal{G} _x(Z_f)\Vert _{L^4([0,T];L^4)} \leq C_{N,R},$$ where $C_{N,R}$ is some positive constant depending on $N$ and $R$. Similarly, we have 
 $$\|Z_{\epsilon,f}\Vert_{L^4([0,T;L^4]) } \leq  \|Z_{\epsilon,f}\Vert_{C([0,T;L^4]) } \leq C^\prime\sqrt{N} + \|Z_{\epsilon,f}-Z_f\Vert_{C([0,T];L^4) } $$ and $$\|\mathcal{G}_x(Z_{\epsilon,f})\Vert_{L^4([0,T];L^4)} \leq C\left(C^\prime \sqrt{N}+ \|Z_{\epsilon,f}-Z_f\Vert_{C([0,T];L^4) },R \right)$$ $\mathbb{P} - a.s.$ for every $f \in \mathcal{P} _2^N$. 
 Then, by Lemma \ref{LDPZ}, we can choose $M$ sufficiently large such that
	$$\lim_{\epsilon \rightarrow 0}\sup_{x \in B_H(R)}\sup_{f \in \mathcal{P} _2^N}\mathbb{P} \left(\| \psi^x \Vert _{L^4([0,T];L^4)}>\left(\ln M - C_T\right)^{1/4}\right) = 0$$ and 
	$$\lim_{\epsilon \rightarrow 0}\sup_{f \in \mathcal{P} _2^N} \mathbb{P} \left(\| Z_{\epsilon, f}-Z_f\Vert _{C([0,T];L^4)} > \frac{\delta }{2M}\right) = 0.$$ Therefore, by above discussions, it follows that
	$$ \lim_{\epsilon \rightarrow 0}\sup_{x \in B_H(R)}\sup_{f \in \mathcal{P} _2^N}\mathbb{P} \left(\|u_{\epsilon ,f}^x - u_f^x\Vert _{L^\infty([0,T];H)} > \delta \right) = 0.$$ According to Theorem 2.10 and Theorem 2.13 of \cite{salins2019equivalences}, this implies that the family $\left\{\mathcal{L} (u_\epsilon^x)\right\}_{\epsilon > 0}$ satisfies a Freidlin-Wentzell uniform large deviations principle in $L^\infty([0,T];H)$ with rate functions $I^x$, uniformly with respect to $x \in B_H(R)$. 

	Moreover, $$K^x(N) = \left\{u \in L^\infty([0,T];H): I^x(u) \leq N \right\} = \left\{(I + \mathcal{G}_x)(Z_f): f \in \mathcal{S}^{2N}  \right\}.$$ By Proposition \ref{compact1}, the set $\left\{Z_f: f \in \mathcal{S} ^{2N}\right\}$ is compact for any $N>0$. Lemma \ref{continuous lemma} implies that the mapping $$I  + \mathcal{G} _x: C([0,T]; L^4) \rightarrow L^\infty ([0,T];H),$$ is continuous for each fixed $x \in H$. Thus, $K^x(N)$ is compact which implies that the rate function $I^x$ is good. 

\end{proof}

We can also prove the FWULDP through another approach. We introduce the following modification of the Uniformly Contraction Principle \cite{cerrai2022large} that allows us to get a uniform large deviation principle.
\begin{theorem} \label{uniform contraction principle}
    Let $D$ be some nonempty set and $\{ \Lambda^x \}_{x \in D}$ be a family of mappings from a Banach space $F$ to a Banach space $E$. Assume \\ (i) the family of measures $\{\nu_\epsilon\}_{\epsilon>0}$ satisfies a large deviations principle on $F$ with good rate function $J: F \rightarrow [0,+\infty]$; \\
    (ii) for any fixed $s>0$, 
    $$\sup_{h \in \Phi(s)}\|h\Vert_F < \infty,$$ where $\Phi(s):=\left\{h \in F: J(h) \leq s \right\}$; \\
    (iii) there exists another Banach space $G$ such that $F$ is continuously embedded into $G$ 
    and for every $R>0$, there exists some $L_R>0$ such that 
    $$\sup_{x\in D}\sup_{\phi_1,\phi_2 \in B_G(R) \cap F} \frac{\|\Lambda^x(\phi_1)-\Lambda^x(\phi_2)\Vert_E}{\|\phi_1 -\phi_2\Vert_F} = L_R.$$ Then the family of push-forward measures $\{\mu_\epsilon^x\}_{\epsilon>0}$ defined by $\mu_\epsilon^x:=\nu_\epsilon \circ (\Lambda^x)^{-1}$ satisfies a Freidlin-Wentzell uniform large deviations principle in $E$ with rate functions $I^x$ uniformly with respect to $x \in D$, where $I^x$ is given by 
    $$I^x(\phi):= \inf\left\{J(\Psi): \Psi\in F, \phi = \Lambda^x(\Psi) \right\}.$$
\end{theorem}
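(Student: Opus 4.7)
The plan is to mimic the classical contraction principle while being careful about the fact that (iii) only gives Lipschitz continuity of $\Lambda^x$ between elements of $F$ that lie in a common $G$-ball. The key observation is that (ii), combined with the continuous embedding $F \hookrightarrow G$ with constant $C := C_{F \hookrightarrow G}$, forces every level set $\Phi(s)$ to be bounded in $G$ as well, which is exactly what is needed to invoke (iii) uniformly in $x \in D$. Before tackling the two bounds, I would verify that $I^x$ is a good rate function by showing $\Phi_{I^x}(r) = \Lambda^x(\Phi(r))$: the inclusion $\supset$ is immediate; for $\subset$, given $\varphi$ with $I^x(\varphi) \leq r$, one extracts an $F$-convergent subsequence $\Psi_{n_k} \to \Psi^*$ from a minimizing sequence $\Psi_n \in \Phi(r + 1/n)$ (using compactness of $\Phi(r+1)$), observes that $\Lambda^x(\Psi^*) = \varphi$ by applying (iii) on the common $G$-ball containing the $\Psi_{n_k}$'s, and uses lower semicontinuity of $J$ to conclude $\Psi^* \in \Phi(r)$. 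Since $\Phi(r)$ is compact in $F$ and $\Lambda^x$ is continuous on $G$-bounded subsets of $F$, the image $\Lambda^x(\Phi(r))$ is compact in $E$.

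For the upper bound, fix $r_0,\delta,\gamma > 0$ and set $R_0 := 2C\sup_{h \in \Phi(r_0)}\|h\|_F + 1$, which is finite by (ii). Choose $\eta \in (0,1/(2C))$ small enough that $L_{R_0}\eta < \delta$. Then for any $r \leq r_0$ and any $\Psi \in B_F(\Phi(r),\eta)$, one has $\|\Psi\|_G \leq R_0$, and picking $h \in \Phi(r)$ with $\|h-\Psi\|_F < \eta$, condition (iii) gives $\|\Lambda^x(\Psi) - \Lambda^x(h)\|_E \leq L_{R_0}\eta < \delta$ uniformly in $x \in D$, so $\Lambda^x(B_F(\Phi(r),\eta)) \subset B_E(\Phi_{I^x}(r),\delta)$. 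Hence, by (i),
$$\mu_\epsilon^x\bigl(B_E^c(\Phi_{I^x}(r),\delta)\bigr) \leq \nu_\epsilon\bigl(B_F^c(\Phi(r),\eta)\bigr) \leq \exp\bigl(-(r-\gamma)/\epsilon\bigr)$$
for all sufficiently small $\epsilon$, uniformly in $x \in D$ and $r \leq r_0$.

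For the lower bound, fix $r,\delta,\gamma > 0$ and $\varphi \in \Phi_{I^x}(r)$. Using the identification $\Phi_{I^x}(s) = \Lambda^x(\Phi(s))$ from the first paragraph, pick $\Psi \in \Phi(I^x(\varphi))$ with $\Lambda^x(\Psi) = \varphi$, so $J(\Psi) \leq I^x(\varphi) \leq r$. With $R_0$ and $\eta$ chosen as above for $r_0 = r$, any $\Psi' \in B_F(\Psi,\eta)$ satisfies $\|\Psi'\|_G \leq R_0$, whence (iii) yields $\|\Lambda^x(\Psi') - \varphi\|_E < \delta$, i.e., $B_F(\Psi,\eta) \subset (\Lambda^x)^{-1}(B_E(\varphi,\delta))$. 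Since the LDP lower bound in Definition \ref{LDP} is automatically uniform in $\Psi \in \Phi(r)$, (i) gives
$$\mu_\epsilon^x\bigl(B_E(\varphi,\delta)\bigr) \geq \nu_\epsilon\bigl(B_F(\Psi,\eta)\bigr) \geq \exp\bigl(-(J(\Psi)+\gamma)/\epsilon\bigr) \geq \exp\bigl(-(I^x(\varphi)+\gamma)/\epsilon\bigr)$$
for $\epsilon$ small enough, uniformly in $x \in D$ and $\varphi \in \Phi_{I^x}(r)$. The main obstacle is precisely the norm mismatch in (iii): one must first use (ii) plus the continuous embedding $F \hookrightarrow G$ to obtain a uniform $G$-bound on the relevant level sets of $J$, and only then can (iii) be applied with a single constant $L_{R_0}$ that is uniform in $x \in D$; once this localization is in place, the transfer of the LDP bounds through $\Lambda^x$ proceeds essentially as in the classical contraction principle.
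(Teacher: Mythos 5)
Your proof is correct, but it takes a genuinely different (and more self-contained) route than the paper. The paper does not prove the uniform contraction principle from scratch: it cites Theorem 3.3 of Cerrai--Paskal \cite{cerrai2022large}, which establishes the FWULDP under (i)--(iii) together with one additional exponential estimate, namely that for every $s>0$ there exist $R_s,\epsilon_s>0$ with $\nu_\epsilon\bigl(B_G(R_s)\cap F\bigr) \geq 1 - \exp(-s/\epsilon)$ for $\epsilon \leq \epsilon_s$. The whole content of the paper's proof is that this extra estimate is automatic under (i) and (ii): by the continuous embedding $F \hookrightarrow G$, $\nu_\epsilon(B_G(R_s)\cap F) \geq \nu_\epsilon(B_F(R_s/C))$, and since $\Phi(s+\gamma)$ is $F$-bounded by (ii), one can take $R_s \geq C(\sup_{h\in\Phi(s+\gamma)}\|h\|_F + \delta)$ so that $B_F(R_s/C) \supset B_F(\Phi(s+\gamma),\delta)$, whose measure is $\geq 1-\exp(-s/\epsilon)$ by the LDP upper bound. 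You instead re-derive the contraction principle directly: you identify $\Phi_{I^x}(r) = \Lambda^x(\Phi(r))$, use (ii) plus the embedding to localize both $\Phi(r_0)$ and its $\eta$-enlargement inside a fixed $G$-ball $B_G(R_0)$, and then apply the Lipschitz bound of (iii) with the single constant $L_{R_0}$ to transfer the LDP bounds. The same localization idea ((ii) $+$ embedding $\Rightarrow$ $G$-bounded level sets $\Rightarrow$ uniform Lipschitz constant) underlies both arguments, but your version makes the theorem self-contained and exposes exactly where each hypothesis is used, whereas the paper's version is shorter at the cost of an external black box.
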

\begin{proof}
     According to Theorem 3.3 of \cite{cerrai2022large}, we only need to prove that for every $s>0$ there exists $R_s>0$ and $\epsilon_s >0$ such that 
    $$\nu_\epsilon\left(B_G(R_s) \cap F \right) \geq 1 - \exp{\left(- \frac{s}{\epsilon}\right)},$$ for any $\epsilon \leq \epsilon_s$. Since $F$ is continuously embedded into $G$, there exists a positive constant $C$ such that $$\nu_\epsilon\left(B_G(R_s) \cap F \right) \geq \nu_\epsilon\left(B_F(R_s/C) \right).$$  By the LDP of $\nu_\epsilon$ in $F$, it follows that for every $s \geq 0 $, $\delta >0$ and $\gamma>0$, there exits $\epsilon_s >0$ such that 
    $$\nu_\epsilon\left(B_F^c(\Phi(s),\delta) \right) \leq \exp{\left(- \frac{s- \gamma}{\epsilon} \right)},$$ for any $\epsilon \leq \epsilon_s$. For every $s>0$, we set $\gamma$ and $\delta$ small and choose a positive constant $R_s$ such that $R_s \geq C\left(\sup_{h \in \Phi(s+\gamma)}\|h\Vert_F + \delta \right)$. Then, we have \begin{align*}
        \nu_\epsilon\left(B_G(R_s) \cap F \right) &\geq \nu_\epsilon\left(B_F(R_s/C) \right) \\ & \geq \nu_\epsilon\left(B_F(\Phi(s+ \gamma),\delta) \right) \\ &\geq 1- \exp{\left(- \frac{s}{\epsilon} \right)}.        \end{align*}Then the result of the theorem follows. 
\end{proof}
Thanks to the discussions in the proof of Proposition \ref{compact1} and (\ref{inequality: Z}), $$\sup_{h \in \Phi(s)}\|h\Vert_{C([0,T];L^4)} \leq C\sqrt{s}.$$ Moreover, by Lemma \ref{LDPZ} and Theorem 2.10, Theorem 2.13 in \cite{salins2019equivalences}, the family $\mathcal{L}(Z_\epsilon)$ satisfies LDP in $C([0,T];L^4)$. Then, the FWULDP for $\mathcal{L}(u_\epsilon^x) $ follows by Lemma \ref{continuous lemma} and Theorem \ref{uniform contraction principle}.

In this paper, we also need another definition of uniform large deviations principle which is called Dembo-Zeitouni uniform large deviations principle. According to \cite{salins2019equivalences}, these two definitions of ULDP are not equivalent. The following definition can be found in \cite{dembo2009large}.
\begin{definition}
	\label{DZULDP}
	Let $E$ be a Banach space and $D$ be some non-empty set. Suppose that for each $x \in D$, $\{\mu^x _\epsilon \}_{\epsilon > 0}$ is a family of probability measures on $E$ and an action function $I^x:E\rightarrow [0,+\infty]$ is a good rate function. The family $\{\mu^x_\epsilon \}_{\epsilon > 0}$ is said to satisfy a Dembo-Zeitouni uniform large deviation principle (DZULDP) in $E$, with rate function $I^x$, uniformly with respect to $x \in D$, if the following hold.
	\item[(1)](Lower bound) For any $\gamma > 0$ and open set $G \subset E$, there exists $\epsilon _0 > 0$ such that
	\begin{equation*}
		\inf_{x \in D}\mu^x _\epsilon (G)  \geq \exp\left(-\frac{1}{\epsilon }\left[\sup_{y \in D}\inf_{u \in G}I^y(u) + \gamma \right]\right),
	\end{equation*}
	for any $\epsilon \leq \epsilon _0$.
	\item[(2)] (Upper bound) For any $\gamma > 0$ and closed set $F \subset E$, there exists $\epsilon _0 > 0$ such that 
	\begin{equation*}
		\sup_{x \in D}\mu^x _\epsilon (F) \leq \exp \left(-\frac{1 }{\epsilon }\left[\inf_{y \in D } \inf_{u \in F} I^y(u) -\gamma \right]\right),
	\end{equation*}
	for any $\epsilon \leq  \epsilon _0$.
 \end{definition}

\begin{corollary}
	\label{corollary DZ}
	Let $D \subset H$ be a compact set. If $u_\epsilon^x$ is the mild solution of Eq. (\ref{sbe2}), then under Assumption \ref{assumption 1}, the family $\left\{\mathcal{L} (u_\epsilon^x)\right\}_{\epsilon > 0}$ satisfies a Dembo-Zeitouni uniform large deviations principle in $L^\infty([0,T];H)$ with good rate function $I^x$, uniformly with respect to $x \in D$. 
 \end{corollary}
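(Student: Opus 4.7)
The plan is to deduce the Dembo-Zeitouni ULDP from the Freidlin-Wentzell ULDP already established in Theorem \ref{LDPu}, exploiting the compactness of $D$. Since a compact subset of $H$ is bounded, $D$ is contained in some ball $B_H(R)$, so Theorem \ref{LDPu} immediately yields the FWULDP for $\{\mathcal{L}(u_\epsilon^x)\}_{\epsilon>0}$ uniformly over $x \in D$, with good rate function $I^x$.

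By Theorem 2.13 of \cite{salins2019equivalences}, the FWULDP and DZULDP formulations coincide whenever the rate functions are suitably equicontinuous in the initial datum on $D$. When $D$ is compact in $H$, this reduces to verifying that for every $r \geq 0$ the set $\bigcup_{x \in D}\Phi^x(r)$ is compact in $L^\infty([0,T];H)$ and that the set-valued map $x \mapsto \Phi^x(r)$ is continuous in the Hausdorff sense.

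To verify these, I would exploit the representation $\Phi^x(r) = (I + \mathcal{G}_x)(\{Z_f : f \in \mathcal{S}^{2r}\})$ already used in the proof of Theorem \ref{LDPu}. Proposition \ref{compact1} supplies compactness of $K(r) := \{Z_f : f \in \mathcal{S}^{2r}\}$ in $C([0,T];L^4)$. A direct modification of the energy-estimate argument in Lemma \ref{continuous lemma}, in which the term $S(t)(x_1-x_2)$ accounts for different initial conditions, shows that $(x,\phi) \mapsto \mathcal{G}_x(\phi)$ is jointly continuous from $H \times L^4([0,T];L^4)$ into $L^\infty([0,T];H)$, with Lipschitz constant uniform on bounded sets. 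Compactness of $D \times K(r)$ together with continuity of the evaluation map then yields compactness of $\bigcup_{x \in D} \Phi^x(r)$, while uniform continuity of $x \mapsto \mathcal{G}_x(\phi)$ over $x \in D$ and $\phi \in K(r)$ delivers Hausdorff continuity of $x \mapsto \Phi^x(r)$.

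The main obstacle is precisely establishing the joint continuity of $(x, \phi) \mapsto \mathcal{G}_x(\phi)$ and promoting it to uniform continuity over the compact $D$; once this is in hand, the equivalence result of \cite{salins2019equivalences} combined with the FWULDP of Theorem \ref{LDPu} yields the desired DZULDP.
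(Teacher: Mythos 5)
Your proposal is correct and follows essentially the same approach as the paper: invoke Salins' equivalence between FWULDP and DZULDP on compact initial data (the paper uses Theorem 2.7 of \cite{salins2019equivalences}, whereas you cite Theorem 2.13, which is a minor citation slip since 2.13 concerns EULP rather than the Hausdorff-continuity criterion), reduce to Hausdorff continuity of $x \mapsto \Phi^x(r)$ via the representation $\Phi^x(r) = (I+\mathcal{G}_x)(\{Z_f : f\in\mathcal{S}^{2r}\})$, and establish that continuity through the same style of energy estimate as in Lemma \ref{continuous lemma}, now tracking the initial-data difference $x-y$. The paper carries out the energy estimate explicitly (arriving at the bound (\ref{initail con })), whereas you gesture at it as "a direct modification," but the idea is the same.
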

\begin{proof}
	According to Theorem 2.7 of \cite{salins2019equivalences}, to prove equivalence of the two uniform large deviations principle over a compact subset of $H$, it suffices the show that the level sets are continuous in Hausdorff metric in the sense that for any $r \geq 0$,
	$$\lim _{n \rightarrow \infty}d_H(x_n, x) = 0 \ \text{implies }\ \lim_{n \rightarrow \infty}\lambda \left(\Phi ^x(r),\Phi^{x_n}(r)\right) = 0,$$ where $\lambda$ is the Hausdorff metric and $\Phi ^x(r) := \left\{u \in L^{\infty}([0,T];H): I^x(u) \leq r\right\}$ is the level set of $I^x$. That is, we must show that for any $x_n \rightarrow x \in H$,
	$$\lim_{n \rightarrow \infty} \max\left(\sup_{u \in \Phi ^{x_n}(r)} dist_{L^\infty([0,T];H)}(u,\Phi ^x(r)), \sup_{u \in \Phi ^x(r)} dist_{L^\infty([0,T];H)}(u,\Phi ^{x_n}(r))\right) = 0.$$
	Suppose $u \in \Phi ^x(r)$, then $f := Q_0^{-1/2}\mathcal{H} (u) \in L^2([0,T];H)$, $\frac{1}{2}\left\| f\right\Vert^2 _{L^2([0,T];H)} \leq r$ and $u$ satisfies
	$$u = (I+\mathcal{G} _x)(Z_f)$$ where $$Z_f(t) = \int_{0}^{t} S(t-s)\sqrt{Q_0}f(s) \,ds .$$ Define $v = (I+\mathcal{G} _y)(Z_f)$, then 
	\begin{align*}
		I^y (v)&= \frac{1}{2}\int_{0}^{T} \| Q_0^{-1/2}\mathcal{H} (v)(t)\Vert_H^2  \,dt \\ &=\frac{1}{2}\int_{0}^{T} \| Q_0^{-1/2}(\partial _tv(t) + Av(t) - \frac{1}{2}D(v^2(t))\Vert_H^2  \,dt\\ &= \frac{1}{2}\int_{0}^{T} \| f(t)\Vert_H^2  \,dt \leq r,
	\end{align*}
	which implies $v \in \Phi ^y(r).$
	Thanks to Equation (\ref{Y1}), 
	\begin{align*}
		\frac{d}{dt} (u(t)-v(t)) 
  &= -A(u(t)-v(t)) + D\left[(\mathcal{G} _x(Z_f)(t) + \mathcal{G} _y(Z_f) (t) + 2Z_f(t))(u(t)-v(t))\right].
	\end{align*}

	Denote $\psi _f := \mathcal{G} _x(Z_f)(t) + \mathcal{G} _y(Z_f) (t) + 2Z_f(t)$. By similar energy estimates as in the proof of Lemma \ref{continuous lemma} we have that
	\begin{align*}
		\frac{1}{2}\frac{d}{dt}\| u(t)-v(t)\Vert _H^2 + \|u(t)-v(t)\Vert _{H^1}^2 &\leq \int_{0}^{1} D\left[\psi_f (t)(u(t)-v(t))\right](u(t)-v(t)) \,d\xi \\ &= -\int_{0}^{1} \psi_f (t)(u(t)-v(t))D(u(t)-v(t)) \,d\xi \\ &\leq \| \psi_f (t)\Vert _{L^4}\| u(t)-v(t)\Vert _{L^4}\| u(t)-v(t)\Vert _{H^1} \\ &\leq C_1\| \psi_f (t)\Vert _{L^4}\| u(t)-v(t)\Vert _H^{3/4}\| u(t)-v(t)\Vert _{H^1}^{5/4}\\ &\leq  C_2\| \psi_f (t)\Vert _{L^4}^{8/3}\| u(t)-v(t)\Vert _H^2 + \frac{1}{2}\| u(t)-v(t)\Vert _{H^1}^2.
	\end{align*}
	Then \begin{equation*}
		\frac{d}{dt}\| u(t)-v(t)\Vert _H^2 + \|u(t)-v(t)\Vert _{H^1}^2 \leq 2C_2\| \psi_f(t) \Vert _{L^4}^{8/3}\| u(t)-v(t)\Vert _H^2.
	\end{equation*}
	Since $u(0)-v(0) = x-y$, it follows from the Gronwall inequality that
	\begin{equation*}
		\sup_{0 \leq t \leq T}\| u(t)-v(t)\Vert _H^2 \leq \| x-y\Vert _H^2\exp\left(2C_2\int_{0}^{T} \| \psi_f(s) \Vert _{L^4}^{8/3} \,ds \right).
	\end{equation*}
	Moreover, \begin{align*}
		\int_{0}^{T} \| \psi_f(s) \Vert _{L^4}^{8/3} \,ds &\leq C_T\| \psi_f \Vert_{L^4([0,T];L^4)}^{8/3}\\  &\leq C_T\left(\| \mathcal{G} _x(Z_f)\Vert _{ L^4([0,T];L^4)}+ \| \mathcal{G} _y(Z_f) \Vert _{ L^4([0,T];L^4)} + 2\| Z_f\Vert_{L^4([0,T];L^4)}\right)^{8/3}.
	\end{align*}
 Notice that by (\ref{Y estimate 1}), (\ref{Y estimate 2})
	$$	\| \mathcal{G} _x(Z_f)(t)\Vert _H^2 \leq \| x\Vert _H^2 \exp\left(2C_2\int_{0}^{t} \| Z_f(s) \Vert _{L^4}^{8/3} \,ds \right) + \frac{1}{2} \exp\left(2C_2\int_{0}^{t} \| Z_f(s) \Vert _{L^4}^{8/3} \,ds \right) \int_{0}^{t} \| Z_f(s) \Vert _{L^4}^4 \,ds,$$ and 
	$$\int_{0}^{t} \| \mathcal{G} _x(Z_f)(s)\Vert _{H^1}^2 \,ds \leq \int_{0}^{t} 2C_2\| Z_f(s) \Vert _{L^4}^{8/3} \| \mathcal{G} _x(Z_f)(s)\Vert _H^2 + \frac{1}{2}\| Z_f(s) \Vert _{L^4}^4 \,ds + \|x\Vert _H^2.$$
	Since 
	$$\| \mathcal{G} _x(Z_f)\Vert _{ L^4([0,T];L^4)} \leq \| \mathcal{G} _x(Z_f)\Vert _{ L^\infty([0,T];H)}^{3/4}\| \mathcal{G} _x(Z_f)\Vert _{ L^2([0,T];H^1)}^{1/4}$$ and since by Proposition \ref{compact1}, $\| Z_f\Vert _{ L^4([0,T];L^4)} \leq C \| f\Vert _{L^2([0,T];H)}$, we have 
	\begin{equation}
		\label{initail con }
		dist_{L^\infty([0,T];H)}(u,\Phi ^y(r)) \leq \| u-v\Vert _{L^\infty([0,T];H)} \leq C_r(1+ \| x\Vert _H + \|  y\Vert _H)\| x-y\Vert_H.
	\end{equation}
	for some continuous increasing function $C_r :[0,\infty) \rightarrow [0,\infty)$. Since this is true for arbitrary $u \in \Phi ^x(r)$, it follows that $$\sup_{u \in \Phi ^x(r)} dist_{L^\infty([0,T];H)}(u,\Phi ^{x_n}(r)) \leq C_r(1+ \| x\Vert _H + \|  x_n\Vert _H)\|x-x_n\Vert_H, $$ which,  together with a similar argument for $\sup_{u \in \Phi ^{x_n}(r)} dist_{L^\infty([0,T];H)}(u,\Phi ^{x}(r))$, implies the desired result.
\end{proof}

\subsection{Invariant measures in a trace-class case}\label{trace class}
In this section, we consider the cases that noises are in more specific form. We assume that the covariance operator $Q_\epsilon $ has the following form
$$Q_\epsilon =  A^{\alpha} Q_{\delta (\epsilon )}, \ 0 \leq \alpha < 1/2$$ where $Q_{\delta (\epsilon )}$ belongs to $L(H)$ and is defined as $$Q_{\delta (\epsilon )} := (I + \delta (\epsilon )A^\beta )^{-1}$$ for some $\beta >0 $, $\delta(\epsilon) >0$. Here we take $\lim_{\epsilon \rightarrow 0}\delta (\epsilon ) =0$. Notice that $$\lim_{\epsilon \rightarrow 0}\sqrt{Q_\epsilon }=  A^{\alpha/2} \ \text{in}\  L(H).$$ Then Assumption \ref{assumption 1} is fulfilled. It follows from Theorem \ref{LDPu} that the family $\left\{\mathcal{L} (u_\epsilon^x)\right\}_{\epsilon > 0}$ satisfies a Freidlin-Wentzell uniform large deviations principle in $L^\infty([0,T];H)$. If $\alpha =0 $, the space of ULDP is $C([0,T];H)$. Next, we will choose $\beta $ such that for any fixed $\epsilon>0$, $Q_\epsilon $ is a trace class operator. Since $$ A^{\alpha/2}\sqrt{Q_{\delta(\epsilon)}} e_k = (k\pi)^\alpha \sigma _{\delta(\epsilon) ,k}e_k := (k\pi)^\alpha(1+\delta (\epsilon )(k \pi)^{2\beta })^{-1/2}e_k,$$ we have 
\begin{align*}
	Tr(Q_\epsilon ) &= \sum_{k \in \mathbb{N}}(k\pi)^{2\alpha}\sigma^2 _{\delta(\epsilon) ,k}\\ &= \sum_{k \in \mathbb{N}}(k\pi)^{2\alpha}(1+\delta (\epsilon )(k \pi)^{2\beta })^{-1}\\ & \leq \int_{1}^{ \infty} \frac{(\pi x)^{2\alpha}}{1+ (\delta ^{1/2\beta}\pi x)^{2 \beta}} \,dx \\&= \delta ^{-\frac{1+2\alpha}{2\beta}}\int_{\delta ^{\frac{1}{2\beta}}}^{\infty} \frac{(\pi y)^{2\alpha}}{1+ (\pi y)^{2\beta}} \,dy \\ &\leq \delta ^{-\frac{1+2\alpha}{2\beta}}\left(\int_{\delta ^{\frac{1}{2\beta}}}^{1} (\pi y)^{2\alpha} \,dy + \int_{1}^{\infty} \frac{1}{(\pi y)^{2\beta-2\alpha}} \,dy \right)\\ &\leq C_\beta \delta (\epsilon )^{-\frac{1+2\alpha}{2\beta}} < \infty,
\end{align*}
given $\beta >  1/2 + \alpha$. Therefore, $Q_\epsilon $ is trace class if we assume $\beta > 1/2 + \alpha$. Since $\left\langle D(u^2(s)),u(s)\right\rangle_H =0$, by applying It\^{o}'s formula we get
\begin{equation}
	\label{ito 2}
	\mathbb{E} \| u_\epsilon ^x(t)\Vert _H^2 + 2\int_{0}^{t} \mathbb{E} \| u_\epsilon^x(s)\Vert _{H^1}^2 \,ds = \| x\Vert _H^2 + t\epsilon \text{Tr}Q_\epsilon \leq  \| x\Vert _H^2 + C_\beta t\epsilon \delta (\epsilon )^{-(1+2\alpha)/2\beta }.
\end{equation}

To ensure the existence and uniqueness of the invariant measure for Eq. (\ref{sbe2}), the condition  $\beta > 1/2 + \alpha $ is not enough. Notice that for each $\epsilon  > 0$, we have 
$$D(A^{(\beta-\alpha) /2}) \subset \text{Im}(Q_\epsilon ^{1/2}),$$ where $\text{Im}(Q_\epsilon ^{1/2})$ is the range of the operator $Q_\epsilon ^{1/2}$. According to  Goldys and Maslowski's work on the exponential ergodicity of stochastic Burgers equations \cite{goldys2005exponential} (see also Gourey \cite{gourcy2007large}), if $1/2 < \beta - \alpha < 1$, there exists a unique invariant measure on $H$ associated with the mild solution of Eq. (\ref{sbe2}). 

Now, let $\left\{\nu _\epsilon \right\}_{\epsilon >0}$ be the family of invariant measure. Each $\nu _\epsilon $ is ergodic in the sense that
$$\lim_{T \rightarrow \infty} \frac{1}{T}\int_{0}^{T} f(u_\epsilon^x(t)) \,dt = \int_H f(x) \,d\nu _\epsilon (x),$$ for all $x \in H$ and Borel-measurable function $f: H \rightarrow \mathbb{R} $. 

Implied by (\ref{ito 2}) and the invariance of $\nu _\epsilon $, for every $T >0$
\begin{align*}
	\int_H \| x\Vert _{H^1}^2 \,d\nu _\epsilon (x) &=  \int_H \mathbb{E} \| u_\epsilon^x(t)\Vert^2 _{H^1} \,d\nu _\epsilon (x)\\ &= \frac{1}{T}\int_{0}^{T}  \int_H \mathbb{E} \| u_\epsilon^x(t)\Vert^2 _{H^1} \,d\nu _\epsilon (x) \,dt \\ &= \frac{1}{T}\int_H\int_{0}^{T}   \mathbb{E} \| u_\epsilon^x(t)\Vert^2 _{H^1}\,dt \,d\nu _\epsilon (x) \\ &\leq \frac{1}{2T}\int_{H} \| x\Vert _H^2 \,d\nu _\epsilon (x ) + \frac{1}{2}C_\beta \epsilon \delta (\epsilon )^{-(1+2\alpha)/2\beta } \\ & \leq \frac{1}{2T}\int_{H} \| x\Vert _{H^1}^2 \,d\nu _\epsilon (x ) + \frac{1}{2}C_\beta \epsilon \delta (\epsilon )^{-(1+2\alpha)/2\beta }.
\end{align*}
If $\sup_{\epsilon >0}\epsilon \delta (\epsilon )^{-(1+2\alpha)/2\beta }< \infty$ and we choose $T>1$ then we get 
$$ \sup_{\epsilon >0}\int_H \| x\Vert _{H^1}^2 \,d\nu _\epsilon (x) < \infty,$$ which implies the tightness of $\left\{\nu _\epsilon \right\}_{\epsilon >0}$ in $H$. If we further assume 
$$\lim_{\epsilon  \rightarrow 0}\epsilon \delta (\epsilon )^{-(1+2\alpha)/2\beta } = 0,$$we get that
$$\nu _\epsilon  \rightharpoonup \delta _0$$ as $\epsilon  \rightarrow 0$. In the remaining part of this paper, we will prove the LDP for $\left\{\nu _\epsilon \right\}_{\epsilon >0}$ with the setting above.

\section{The rate function of LDP for invariant measure }\label{sec: Rate Fun}
As we have already discussed in Section \ref{trace class}, assume that 
\begin{equation}\label{Q epsilon }
	Q_\epsilon  = A^{\alpha}Q_{\delta (\epsilon )}:= A^{\alpha} (I + \delta (\epsilon )A^\beta )^{-1}.
\end{equation}
where $ 1/2 < \beta -\alpha < 1$, then for any fixed $\epsilon>0$, there exists a unique invariant measure on $H$ associated to the mild solution of Eq. (\ref{sbe2}). For any $T>0$, $0 \leq \alpha <1/2$ and $u \in C([0,T];H^{-\alpha})$ we will denote the rate function $I_T:C([0,T];H^{-\alpha}) \rightarrow [0,+\infty]$,
$$I_T(u):= \frac{1}{2}\int_{0}^{T} \| A^{-\frac{\alpha}{2}}\mathcal{H} (u)(t)\Vert ^2_H \,dt .$$
The quasi-potential $U^{\alpha}: H^{-\alpha} \rightarrow \mathbb{R} ^+$ is defined as 
\begin{equation}
	\label{quasi potential 1}
	U^\alpha(\phi ):= \inf\left\{I_T(u): T >0, u \in C([0,T];H^{-\alpha}), u(0) = 0, u(T)= \phi \right\}.
\end{equation}
Notice that the quasi-potential can also be written in a different form 
\begin{equation}
	\label{quasi potential 2}
	U^\alpha(\phi )= \inf\left\{I_{-T}(u): T >0, u \in C([-T,0];H^{-\alpha}), u(-T) = 0, u(0)= \phi \right\},
\end{equation}
where 
$$I_{-T}(u):= \frac{1}{2}\int_{-T}^{0} \| A^{-\frac{\alpha}{2}}\mathcal{H} (u)(t)\Vert ^2_H \,dt $$ for any $T>0$ and $u \in C([-T,0];H^{-\alpha})$.
In this section, we will study the quasi-potential $U(x)$. We will prove that the quasi-potential $U$ can be a good rate function of large deviations principle. To achieve that, we need some properties of the skeleton equation.
\subsection{The skeleton equation}
The skeleton equation associated to Eq. (\ref{sbe2}) is 
\begin{equation}
	\label{skt eq 2}
\begin{cases}
	\partial _t u = \Delta  u(t) + \frac{1}{2}D(u^2(t)) + A^{\alpha/2}f(t),\\
	u(0) = x .
\end{cases}
\end{equation}
As we already discussed above, given $ f \in L^2([0,T];H)$, the mild solution of Eq. (\ref{skt eq 2}) is well-defined. We denote $W^{1,2}([0,T]; H^{2-\alpha}, H^{-\alpha})$ the space of all $u \in L^2([0,T];H^{2-\alpha})$ which are weakly differentiable as functions from $[0,T]$ to $H^{-\alpha}$ and their time derivative belongs to $L^2([0,T];H^{-\alpha})$. That is $$W^{1,2}([0,T]; H^{2-\alpha}, H^{-\alpha}) := L^2([0,T];H^{2-\alpha}) \cap W^{1,2}([0,T];H^{-\alpha}).$$ Then, we can prove the following propositions.
\begin{proposition}
	\label{skt estimate}
	Suppose $T > 0$, $f \in L^2([0,T];H)$, $0 \leq \alpha< 1/2$ and $x \in H^{1-\alpha}$, then the mild solution $u$ to Eq. (\ref{skt eq 2}) satisfies 
	$$u \in L^\infty([0,T];H^{1-\alpha}) \ \text {and}\ u \in W^{1,2}([0,T]; H^{2-\alpha}, H^{-\alpha}).$$
	
\end{proposition}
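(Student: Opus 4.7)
The plan is to derive the claimed regularity via an energy estimate for the skeleton equation tested against $A^{1-\alpha}u$, after first bootstrapping enough integrability on $u$ to close the estimate. Since the forcing produces a convolution-type term $\phi(t) := \int_0^t S(t-s)A^{\alpha/2}f(s)\,ds$ that plays exactly the role of $Z_\epsilon$ in Proposition \ref{wellposedness of SBE}, and since $A^{\alpha/2}$ falls into the case $\theta = 2\alpha < 1$ of Proposition \ref{compact1}, one immediately has $\phi \in C([0,T];L^4)$; the fixed-point and a priori arguments of Proposition \ref{wellposedness of SBE} then yield $u \in L^\infty([0,T];H) \cap L^2([0,T];H^1)$. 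The 1D Gagliardo--Nirenberg inequality $\|u\|_{L^\infty} \leq C\|u\|_H^{1/2}\|u\|_{H^1}^{1/2}$ upgrades this to $u \in L^4([0,T];L^\infty)$, which is the key preliminary integrability for what follows.

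I would next test (\ref{skt eq 2}) against $A^{1-\alpha}u$ --- rigorously through a Galerkin truncation $P_n u$ whose uniform-in-$n$ bounds come from the same calculation --- to obtain
\begin{equation*}
\tfrac{1}{2}\frac{d}{dt}\|u\|_{H^{1-\alpha}}^2 + \|u\|_{H^{2-\alpha}}^2 = \tfrac{1}{2}\langle D(u^2), A^{1-\alpha}u\rangle_H + \langle f, A^{1-\alpha/2}u\rangle_H.
\end{equation*}
The forcing term is bounded by $\|f\|_H\|u\|_{H^{2-\alpha}}$ and absorbed through Young's inequality. For the nonlinear term I combine the duality estimate $|\langle D(u^2), A^{1-\alpha}u\rangle_H| \leq \|D(u^2)\|_{H^{-\alpha}}\|u\|_{H^{2-\alpha}} \leq C\|u^2\|_{H^{1-\alpha}}\|u\|_{H^{2-\alpha}}$ (using that $D : H^{1-\alpha}_0 \to H^{-\alpha}$ is bounded, by integration by parts with the Dirichlet traces vanishing) with the Moser-type product estimate $\|u^2\|_{H^{1-\alpha}} \leq C\|u\|_{L^\infty}\|u\|_{H^{1-\alpha}}$, valid because $H^{1-\alpha}(0,1)$ is a Banach algebra for $1-\alpha > 1/2$ --- this is exactly where the hypothesis $\alpha < 1/2$ enters. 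After Young's inequality one reaches
\begin{equation*}
\frac{d}{dt}\|u\|_{H^{1-\alpha}}^2 + \|u\|_{H^{2-\alpha}}^2 \leq C\|u\|_{L^\infty}^2\,\|u\|_{H^{1-\alpha}}^2 + 2\|f\|_H^2.
\end{equation*}

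Because $\|u\|_{L^\infty}^2 \in L^2([0,T]) \subset L^1([0,T])$ by the preliminary step, Gronwall's inequality together with $x \in H^{1-\alpha}$ delivers $u \in L^\infty([0,T];H^{1-\alpha})$, and integrating the differential inequality over $[0,T]$ then produces $u \in L^2([0,T];H^{2-\alpha})$. For the time derivative, the equation rewritten as $\partial_t u = -Au + \tfrac{1}{2}D(u^2) + A^{\alpha/2}f$ gives $\|Au\|_{H^{-\alpha}} = \|u\|_{H^{2-\alpha}}$, $\|A^{\alpha/2}f\|_{H^{-\alpha}} = \|f\|_H$, and $\|D(u^2)\|_{H^{-\alpha}} \leq C\|u\|_{L^\infty}\|u\|_{H^{1-\alpha}}$, each in $L^2([0,T])$, so $\partial_t u \in L^2([0,T];H^{-\alpha})$ and hence $u \in W^{1,2}([0,T];H^{2-\alpha},H^{-\alpha})$. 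The main technical obstacle I anticipate is the rigorous justification of the energy identity at the initial regularity of the mild solution; I would handle this through the Galerkin/spectral-projection scheme mentioned above, together with weak and weak-$\ast$ compactness supplied by the uniform bounds just derived.
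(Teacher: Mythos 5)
Your argument is substantially correct and takes a genuinely different route from the paper. The paper multiplies Eq.\ (\ref{skt eq 2}) by $A^{-\alpha}\partial_t u$ and controls the nonlinearity through the Besov product rule and embedding of Lemmas \ref{lemma:A1}--\ref{lemma:A2}, followed by a chain of interpolation inequalities that ultimately reduce everything to the integrability of $\|u\|_H^2\|u\|_{H^1}^2$; it obtains $\partial_t u \in L^2(H^{-\alpha})$ first and then deduces $u \in L^2(H^{2-\alpha})$ from (\ref{u H2 }). You instead test against $A^{1-\alpha}u$, bound the nonlinearity by $\|D(u^2)\|_{H^{-\alpha}}\lesssim\|u\|_{L^\infty}\|u\|_{H^{1-\alpha}}$ and close the Gronwall loop with $u\in L^4([0,T];L^\infty)$ coming from Gagliardo--Nirenberg, getting $L^\infty(H^{1-\alpha})\cap L^2(H^{2-\alpha})$ directly from the energy inequality and then reading $\partial_t u\in L^2(H^{-\alpha})$ off the equation. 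Since $\|u\|_{L^\infty}^2\lesssim\|u\|_H\|u\|_{H^1}$, the two proofs in fact rest on the same preliminary integrability, just packaged differently; yours avoids the Besov interpolation bookkeeping and is more transparent, at the cost of invoking a Kato--Ponce-type bound rather than the paper's own Appendix lemmas.

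Two small inaccuracies worth fixing. First, the justification of $\|u^2\|_{H^{1-\alpha}}\lesssim\|u\|_{L^\infty}\|u\|_{H^{1-\alpha}}$ via ``$H^{1-\alpha}$ is a Banach algebra for $1-\alpha>1/2$'' is misplaced: the algebra property yields $\|u^2\|_{H^{1-\alpha}}\lesssim\|u\|_{H^{1-\alpha}}^2$, whereas the estimate you use is the Moser/Kato--Ponce product bound, which holds for any $s\geq 0$ and does not require $s>1/2$; the constraint $\alpha<1/2$ is needed in the argument for other reasons (e.g.\ to keep the relevant fractional domains $D(A^{(1-\alpha)/2})$ well-behaved with respect to Dirichlet traces, and to control the exponent in the forcing term $\langle A^{\alpha/2}f, u\rangle\leq\|f\|_H\|u\|_{H^\alpha}\leq\|f\|_H\|u\|_{H^1}$). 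Second, the preliminary $u\in L^\infty(H)\cap L^2(H^1)$ follows directly from testing against $u$ itself, which gives $\frac{d}{dt}\|u\|_H^2+\|u\|_{H^1}^2\leq\|f\|_H^2$ (this is how the paper starts); routing through Proposition \ref{wellposedness of SBE} via the convolution $\phi=\int_0^\cdot S(\cdot-s)A^{\alpha/2}f(s)\,ds$ works but requires the additional observation that $\phi\in L^2([0,T];H^1)$, which is not supplied by the $C([0,T];L^4)$ regularity of Proposition \ref{compact1} alone. Finally, both your duality bound $\|D(u^2)\|_{H^{-\alpha}}\lesssim\|u^2\|_{H^{1-\alpha}}$ and the paper's Besov estimate implicitly rest on the periodic/Besov identification of the sine-basis scale $D(A^{r/2})$; the paper is equally informal here, so this is not a gap specific to your argument, but it is the place where a fully rigorous treatment would require care.
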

\begin{proof}
	We apply standard energy estimates. Multiplying both sides of Eq. (\ref{skt eq 2}) by $u(t)$ and integrating with respect to $\xi \in [0,1]$, by Lemma \ref{lemma:A2}, we get 
	\begin{align*}
		\frac{1}{2}\frac{d}{d t}\| u(t)\Vert _H^2 + \| u(t)\Vert _{H^1}^2 &= \int_{0}^{1} A^{\alpha/2}f(t)u(t)\,d\xi + \frac{1}{2}\int_{0}^{1} D(u^2(t))u(t) \,d\xi  \\ & \leq \| A^{\alpha/2}f(t)\Vert _{H^{-\alpha}}\| u(t)\Vert_{H^\alpha} \\ & \leq \frac{1}{2} \| f(t)\Vert _H^2 + \frac{1}{2}\| u(t)\Vert _{H^\alpha}^2\\ & \leq \frac{1}{2} \| f(t)\Vert _H^2 + \frac{1}{2}\| u(t)\Vert _{H^1}^2,
	\end{align*}
	where $\int_{0}^{1} D(u^2(t))u(t)\,d\xi= 0$ due to the boundary condition.
	We have \begin{equation}\label{u Gronwall 1}
		\frac{d}{d t}\| u(t)\Vert _H^2 + \| u(t)\Vert _{H^1}^2 \leq \| f(t)\Vert _H^2,
	\end{equation}
	thus \begin{equation}\label{u Gronwall 1 b} \| u(t)\Vert _H^2 + \int_{0}^{t} \| u(s)\Vert _{H^1}^2 \,ds \leq \int_{0}^{t} \| f(s)\Vert _H^2 \,ds + \| x\Vert _H^2 < \infty. \end{equation}
	Next, multiplying both sides of Eq. (\ref{skt eq 2}) by $A^{-\alpha}\partial_t u$ and repeat the above procedure we get 
	\begin{align*}
		\| \partial_t u\Vert _{H^{-\alpha}}^2 + \frac{1}{2}\frac{d}{d t}\| u(t)\Vert _{H^{1-\alpha}}^2 &= \int_{0}^{1} Du(t)u(t)A^{-\alpha}\partial_tu \,d\xi  + \int_{0}^{1} A^{\alpha/2}f(t)A^{-\alpha}\partial_tu \,d\xi \\ &\leq \| Du(t) u(t)\Vert _{H^{-\alpha}}\| \partial_tu\Vert _{H^{-\alpha}} + \| \partial_tu\Vert _{H^{-\alpha}}\| f(t)\Vert _H \\ &\leq \frac{1}{4} \| \partial_tu\Vert _{H^{-\alpha}}^2 + 2\| Du(t) u(t)\Vert _{H^{-\alpha}}^2 + 2\| f(t)\Vert _H^2. 
	\end{align*}
	By Lemma \ref{lemma:A1} and \ref{lemma:A2}, we have for some positive constants $C_1,C_2$ and  $\varepsilon =\frac{1-2\alpha}{3} >0 $ ,
	$$\| Du(t) u(t)\Vert _{H^{-\alpha}}^2 \leq C_1\| Du(t) \Vert_{B_{4,2}^{-\alpha}}^2 \|u(t)\Vert _{B_{4,2}^{\alpha+\varepsilon} }^2 \leq C_2\|u(t)\Vert_{H^{5/4 -\alpha}}^2\|u(t)\Vert_{H^{1/4+\alpha+\varepsilon}}^2 $$ 
 Moreover, by  interpolation inequality 
 $$\|u(t)\Vert_{H^{5/4 -\alpha}}^2 \leq  \|u(t)\Vert_{H^{1 -\alpha}}^{3/2}\|u(t)\Vert_{H^{2 -\alpha}}^{1/2}$$ and 
 $$\|u(t)\Vert_{H^{1/4 +\alpha + \varepsilon}}^2 \leq  \|u(t)\Vert_{H^{\alpha +  \varepsilon}}^{3/2}\|u(t)\Vert_{H^{1+\alpha +  \varepsilon}}^{1/2}.$$ Since  $1+ \alpha +  \varepsilon \leq 2 - \alpha$, by the Poincar\'e inequality
	\begin{equation}\label{ineq:nonlinear term}
	    \| Du(t) u(t)\Vert _{H^{-\alpha}}^2 \leq C_2 \|u(t)\Vert _{H^{1-\alpha}}^{3/2}\| u(t)\Vert _{H^{\alpha+ \varepsilon}}^{3/2}\| u(t)\Vert _{H^{2-\alpha}}.
     \end{equation}
	Next, we need to estimate $ \| u(t)\Vert _{H^{2-\alpha}}$. Since $$ \Delta  u(t) =\partial _t u - \frac{1}{2}D(u^2(t)) - A^{\alpha/2}f(t),$$ we have 
	\begin{align*}
		\| u(t)\Vert _{H^{2-\alpha}} = \| \Delta u(t)\Vert _{H^{-\alpha}} &= \| \partial _t u - \frac{1}{2}D(u^2(t)) - A^{\alpha/2}f(t)\Vert _{H^{-\alpha}}\\ & \leq \| \partial _t u\Vert _{H^{-\alpha}} + \| Du(t)u(t)\Vert _{H^{-\alpha}} + \| f(t)\Vert _H\\  &\leq \| \partial _t u\Vert _{H^{-\alpha}} + \sqrt{C_2}\|u(t)\Vert _{H^{1-\alpha}}^{3/4}\| u(t)\Vert _{H^{\alpha+ \varepsilon}}^{3/4}\| u(t)\Vert _{H^{2-\alpha}}^{1/2} + \| f(t)\Vert _H\\ & \leq \| \partial _t u\Vert _{H^{-\alpha}} + \frac{1}{2}\| u(t)\Vert _{H^{2-\alpha}} + \frac{C_2}{2}\|u(t)\Vert _{H^{1-\alpha}}^{3/2}\| u(t)\Vert _{H^{\alpha+\varepsilon}}^{3/2} + \| f(t)\Vert _H.
	\end{align*}
	Thus, \begin{equation}\label{u H2 }
		\| u(t)\Vert _{H^{2-\alpha}} \leq C\left(\| \partial _t u\Vert _{H^{-\alpha}} + \|u(t)\Vert _{H^{1-\alpha}}^{3/2}\| u(t)\Vert _{H^{\alpha+\varepsilon}}^{3/2} + \| f(t)\Vert _H\right)
	\end{equation}
	and \begin{align*}
		\| Du(t) u(t)\Vert _{H^{-\alpha}}^2 &\leq C \|u(t)\Vert _{H^{1-\alpha}}^{3/2}\| u(t)\Vert _{H^{\alpha+\varepsilon}}^{3/2}\| u(t)\Vert _{H^{2-\alpha}}\\ &\leq C\Big(\| \partial _t u\Vert _{H^{-\alpha}} \|u(t)\Vert _{H^{1-\alpha}}^{3/2}\| u(t)\Vert _{H^{\alpha+ \varepsilon}}^{3/2} + \|u(t)\Vert _{H^{1-\alpha}}^3\| u(t)\Vert _{H^{\alpha+\varepsilon}}^3 \\&+ \| f(t)\Vert _H\|u(t)\Vert _{H^{1-\alpha}}^{3/2}\| u(t)\Vert _{H^{\alpha+ \varepsilon}}^{3/2}\Big) \\ &\leq \frac{1}{8} \| \partial _t u\Vert _{H^{-\alpha}}^2 + C\|u(t)\Vert _{H^{1-\alpha}}^3\| u(t)\Vert _{H^{\alpha+ \varepsilon}}^3 + \| f(t)\Vert _H^2.
	\end{align*}
	In conclusion we get 
	\begin{equation}\label{u skt estimate}
		\| \partial_t u\Vert _{H^{-\alpha}}^2 + \frac{d}{d t}\| u(t)\Vert _{H^{1-\alpha}}^2 \leq C\|u(t)\Vert _{H^{1-\alpha}}^3\| u(t)\Vert _{H^{\alpha+\varepsilon}}^3 + C\| f(t)\Vert _H^2.
	\end{equation}
 By interpolation inequality
 \begin{equation}\label{4.8a}
     \|u(t)\Vert _{H^{1-\alpha}}\|u(t)\Vert_{H^{\alpha+ \varepsilon}}^3 \leq \|u(t)\Vert_H^{3-2\alpha -3\varepsilon} \|u(t)\Vert_{H^1}^{1+2\alpha+ 3\varepsilon} = \|u(t)\Vert_H^{2} \|u(t)\Vert_{H^1}^2. 
     \end{equation}
		Thus, 
	\begin{equation}
		\label{u H1}
		\begin{split}
			\| u(t)\Vert _{H^{1-\alpha}}^2 &\leq C\exp\left(C\int_{0}^{t} \|u(s)\Vert _H^2\| u(s)\Vert _{H^1}^2 \,ds \right)\int_{0}^{t} \| f(s)\Vert _H^2 \,ds \\ &+ \| x\Vert _{H^{1-\alpha}}^2\exp\left(C\int_{0}^{t} \|u(s)\Vert _H^2\| u(s)\Vert _{H^1}^2 \,ds \right)
		\end{split}
	\end{equation}
	and 
	\begin{equation}
		\label{u_t L2}
		\begin{split}
			\int_{0}^{T} \| \partial_t u\Vert _{H^{-\alpha}}^2 \,dt  &\leq C\sup_{0 \leq s \leq T}\| u(s)\Vert _{H^{1-\alpha}}^2\int_{0}^{T} \|u(t)\Vert _H^2\| u(t)\Vert _{H^1}^2 \,dt \\ &+ C\int_{0}^{T} \| f(t)\Vert _H^2 \,dt + \| x\Vert _{H^{1-\alpha}}^2,
		\end{split}
	\end{equation}
	where $$\int_{0}^{t} \|u(s)\Vert _H^2\| u(s)\Vert _{H^1}^2 \,ds \leq \sup_{0 \leq r \leq t}\| u(r)\Vert _H^2\int_{0}^{t} \| u(s)\Vert _{H^1}^2 \,ds < \infty,$$ for every $t \in [0,T]$.

	Moreover, thanks to (\ref{u H2 }), we get 
\begin{align*}
	&\ \ \int_{0}^{T} \| u(t)\Vert _{H^{2-\alpha}}^2 \,dt \\ &\leq C \left(\int_{0}^{T} \| \partial_t u\Vert _{H^{-\alpha}}^2 \,dt + \int_{0}^{T} \| f(t)\Vert _H^2 \,dt + \sup_{0 \leq s \leq T}\| u(s)\Vert _{H^{1-\alpha}}^2\int_{0}^{T} \|u(t)\Vert _H^2\| u(t)\Vert _{H^1}^2 \,dt\right) \\ &< \infty.
\end{align*}
\end{proof}

\begin{proposition}\label{skt estimate 2}
	Suppose $T > 0$, $f \in L^2([0,T];H)$, $0 \leq \alpha < 1/2$ and $x \in H$, then the mild solution $u$ to the Eq. (\ref{skt eq 2}) satisfies 
	$$u \in W^{1,2}([t_1,T]; H^{2-\alpha}, H^{-\alpha}) \cap L^\infty([t_1,T];H^{1-\alpha})$$ for any $t_1 \in (0,T)$. 
\end{proposition}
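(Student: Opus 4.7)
The strategy is to regularize in time: since $x \in H$ is not quite regular enough to apply Proposition \ref{skt estimate} directly, I would first use the weaker energy estimates valid for $x \in H$ to show that $u(s_0) \in H^{1-\alpha}$ for some $s_0 \in (0, t_1)$, and then restart the equation at time $s_0$ to apply Proposition \ref{skt estimate} on $[s_0, T]$.

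More precisely, the basic energy estimate (\ref{u Gronwall 1})--(\ref{u Gronwall 1 b}) from the proof of Proposition \ref{skt estimate} only uses $x \in H$, so it gives
\[
\sup_{0 \leq t \leq T} \| u(t)\Vert_H^2 + \int_0^T \| u(s)\Vert_{H^1}^2 \, ds \leq \int_0^T \| f(s)\Vert_H^2 \, ds + \| x\Vert_H^2 < \infty.
\]
In particular $u \in L^2([0,T]; H^1)$, so the map $s \mapsto \| u(s)\Vert_{H^1}$ is finite for almost every $s \in (0,t_1)$. Pick any such $s_0 \in (0,t_1)$; then $u(s_0) \in H^1 \subset H^{1-\alpha}$ because $\alpha \geq 0$.

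Next, by uniqueness of the mild solution to Eq. (\ref{skt eq 2}), the function $v(t) := u(t + s_0)$, $t \in [0, T - s_0]$, is the mild solution of the same skeleton equation on $[0, T-s_0]$ with forcing $f(\cdot + s_0) \in L^2([0,T-s_0]; H)$ and initial datum $v(0) = u(s_0) \in H^{1-\alpha}$. Proposition \ref{skt estimate} therefore applies to $v$ and yields
\[
v \in L^\infty([0,T-s_0]; H^{1-\alpha}) \cap W^{1,2}([0,T-s_0]; H^{2-\alpha}, H^{-\alpha}).
\]
Translating back, $u \in L^\infty([s_0, T]; H^{1-\alpha}) \cap W^{1,2}([s_0, T]; H^{2-\alpha}, H^{-\alpha})$, and since $[t_1, T] \subset [s_0, T]$ the desired conclusion follows.

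There is no real obstacle here beyond choosing the auxiliary time $s_0$; the only thing to be a touch careful about is that the choice of $s_0$ is almost sure rather than universal, but since the final statement only requires the conclusion on $[t_1, T]$ and we may always take $s_0 \in (0, t_1)$, this is harmless. Everything else reduces to invoking Proposition \ref{skt estimate} after the time shift.
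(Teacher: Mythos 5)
Your proof is correct and follows essentially the same route as the paper: both use the basic energy estimate (\ref{u Gronwall 1 b}), which only needs $x \in H$, to find an intermediate time $s_0 \in (0, t_1)$ at which $u(s_0) \in H^1 \subset H^{1-\alpha}$, and then invoke Proposition \ref{skt estimate} on $[s_0, T]$. Your write-up is slightly more explicit about the time-shift argument (defining $v(t) = u(t+s_0)$ and appealing to uniqueness of the mild solution to justify the restart), which the paper leaves implicit, but the underlying idea is identical.
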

\begin{proof}
	Let us fix $T>0$. By (\ref{u Gronwall 1 b}), we get 
	$$ \| u(t_1)\Vert _{H}^2 + \int _{0}^{t_1}\| u(s)\Vert _{H^{1}}^2 \,ds \leq \int_{0}^{t_1} \| f(s)\Vert _H^2 \,ds + \| x\Vert _H^2 < \infty$$  for any $t_1 \in (0,T)$. Then we can find $t_0 \in (0,t_1)$ such that $\| u(t_0)\Vert _{H^{1-\alpha}} \leq \| u(t_0)\Vert _{H^1} < \infty$. Again by Proposition \ref{skt estimate}, we have $$u \in L^\infty([t_0,T];H^{1-\alpha}) \ ,\ u \in W^{1,2}(t_0,T; H^{2-\alpha}, H^{-\alpha}).$$ Therefore, 
	$u \in W^{1,2}(t_1,T; H^{2-\alpha}, H^{-\alpha}) \cap L^\infty([t_1,T];H^{1-\alpha})$.
\end{proof}

\begin{proposition}\label{proposition x-y}
	Suppose $T > 0$, $f \in L^2([0,T];H)$, $0 \leq \alpha <1/2$ and $x,y \in H$. Denote $u^x$ as the solution of Eq. (\ref{skt eq 2}). Then, we have 
	$$\sup_{0 \leq t \leq T}\left\| u^x(t)-u^y(t)\right\Vert _H^2 \leq \| x-y\Vert_H^2 \exp\left(C\left[\| x\Vert _H^2 + \| y\Vert _H^2 + \left\| f\right\Vert _{L^2([0,T];H)}^2\right]\right) .$$
\end{proposition}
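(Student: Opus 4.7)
The plan is to set $w := u^x - u^y$. Since $u^x$ and $u^y$ satisfy the skeleton equation (\ref{skt eq 2}) with the same forcing $A^{\alpha/2}f$, the forcing cancels upon subtraction and $w$ solves
\begin{equation*}
\partial_t w + Aw = \tfrac{1}{2} D\bigl((u^x+u^y)w\bigr), \qquad w(0) = x-y.
\end{equation*}
Testing against $w$ in $H$ and integrating by parts on the nonlinearity (using $w(t,0)=w(t,1)=0$) gives the standard energy identity
\begin{equation*}
\tfrac{1}{2}\tfrac{d}{dt}\|w(t)\|_H^2 + \|w(t)\|_{H^1}^2 = -\tfrac{1}{2}\int_0^1 \bigl(u^x(t,\xi)+u^y(t,\xi)\bigr)\, w(t,\xi)\, D w(t,\xi)\, d\xi.
\end{equation*}

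The decisive step is to estimate the right-hand side using the one-dimensional Sobolev embedding $H^1_0(0,1)\hookrightarrow L^\infty(0,1)$, which pulls the $L^\infty$ norm out of the integral while keeping only an $H^1$ norm on $u^x,u^y$:
\begin{equation*}
\Bigl|\int_0^1 (u^x+u^y)\,w\,Dw\, d\xi\Bigr| \leq \bigl(\|u^x\|_{L^\infty}+\|u^y\|_{L^\infty}\bigr)\|w\|_H\|w\|_{H^1} \leq C\bigl(\|u^x\|_{H^1}+\|u^y\|_{H^1}\bigr)\|w\|_H\|w\|_{H^1}.
\end{equation*}
Young's inequality absorbs the $\|w\|_{H^1}$ factor into the dissipative term on the left, leaving
\begin{equation*}
\tfrac{d}{dt}\|w(t)\|_H^2 \leq C'\bigl(\|u^x(t)\|_{H^1}^2 + \|u^y(t)\|_{H^1}^2\bigr)\|w(t)\|_H^2.
\end{equation*}

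Grönwall's inequality then produces
\begin{equation*}
\sup_{0\le t\le T}\|w(t)\|_H^2 \leq \|x-y\|_H^2 \exp\Bigl(C'\int_0^T \bigl(\|u^x(s)\|_{H^1}^2 + \|u^y(s)\|_{H^1}^2\bigr)\,ds\Bigr),
\end{equation*}
and the basic a priori energy bound (\ref{u Gronwall 1 b}), applied separately to $u^x$ and $u^y$, supplies $\int_0^T \|u^z(s)\|_{H^1}^2\,ds \leq \|z\|_H^2 + \|f\|_{L^2([0,T];H)}^2$ for $z\in\{x,y\}$. Substituting this turns the exponent into $C\bigl[\|x\|_H^2+\|y\|_H^2+\|f\|_{L^2([0,T];H)}^2\bigr]$, as required.

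The main subtlety is obtaining the announced \emph{linear} dependence on the data inside the exponential. Bounding the cubic interaction term via a Gagliardo--Nirenberg interpolation through $L^4$ (as is done in Lemma~\ref{continuous lemma}) would instead produce an exponent of the form $\int_0^T \|u^x(s)+u^y(s)\|_{L^4}^{8/3}\,ds$, and the natural way to control this integral using the basic energy identity introduces a \emph{superlinear} dependence on $\|x\|_H^2$, $\|y\|_H^2$, $\|f\|_{L^2}^2$. Exploiting the one-dimensional embedding $H^1_0\hookrightarrow L^\infty$ is precisely what makes the claimed linear exponent work.
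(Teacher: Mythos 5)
Your proposal is correct and follows essentially the same route as the paper: energy identity for $w=u^x-u^y$, the 1D Sobolev embedding $H^1_0(0,1)\hookrightarrow L^\infty(0,1)$ to control the cubic interaction linearly in $\|u^x\|_{H^1}+\|u^y\|_{H^1}$, Young's inequality to absorb $\|w\|_{H^1}$, then Gr\"onwall combined with the a priori bound (\ref{u Gronwall 1 b}). Your closing remark about why the $L^\infty$ route (rather than an $L^4$ Gagliardo--Nirenberg route as in Lemma~\ref{continuous lemma}) is what gives the linear exponent is a correct and useful observation, though it is not part of the proof itself.
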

\begin{proof}
	Define $v(t):= u^x(t)-u^y(t)$, then it satisfies equation
	\begin{equation*}
		\begin{cases}
			\partial _t v = \Delta  v(t) + \frac{1}{2}D\left((u^x)^2(t) - (u^y)^2(t)\right),\\
			v(0) = x-y .
		\end{cases}
	\end{equation*}
	We have by a standard estimate and Sobolev inequality that
	\begin{align*}
		\frac{1}{2}\frac{d}{d t}\| v(t)\Vert _H^2 + \| v(t)\Vert _{H^1}^2 &= -\left\langle \left[u^x(t)+u^y(t)\right]\left[u^x(t)-u^y(t)\right], Dv(t)\right\rangle _H \\ &\leq \| v(t)\Vert _H\left\| u^x(t) +u^y(t)\right\Vert _{L^\infty }\| v(t)\Vert _{H^1} \\ &\leq \frac{1}{2} \| v(t)\Vert _H^2\left\| u^x(t) +u^y(t)\right\Vert_{H^1}^2  + \frac{1}{2} \| v(t)\Vert ^2_{H^1}.
	\end{align*}
	Then, $$\frac{d}{d t}\| v(t)\Vert _H^2 + \| v(t)\Vert _{H^1}^2 \leq \| v(t)\Vert _H^2\left\| u^x(t) +u^y(t)\right\Vert_{H^1}^2.$$ Thus, by the Gronwall inequality
	\begin{align*}
		\| v(t)\Vert _H^2 &\leq \| x-y\Vert _H^2\exp\left(\int_{0}^{t} \left\| u^x(s) +u^y(s)\right\Vert_{H^1}^2 \,ds \right) \\ &\leq \| x-y\Vert _H^2\exp\left(2\int_{0}^{t} \left(\| u^x(s)\Vert_{H^1}^2  +\| u^y(s)\Vert_{H^1}^2\right) \,ds \right).
	\end{align*}
Thanks to (\ref{u Gronwall 1 b}), the result follows.
\end{proof}
\subsection{The rate functions}
As we already discussed in Section \ref{trace class}, if $Q_\epsilon := A^\alpha Q_{\delta (\epsilon )} = A^{\alpha} (I + \delta (\epsilon )A^\beta )^{-1}$, $1/2 < \beta -\alpha < 1$, then $\left\{\mathcal{L} (u_\epsilon^x)\right\}_{\epsilon > 0}$ satisfies Freidlin-Wentzell uniform large deviations principle in $L^\infty([0,T];H)$ with rate functions $I^x$ where 
$$I^x(u) = \begin{cases}
	\frac{1}{2}\int_{0}^{T} \| A^{-\frac{\alpha}{2}}\mathcal{H} (u)(t)\Vert_H^2  \,dt & \text{if} \ A^{-\frac{\alpha}{2}} \mathcal{H}(u) \in L^2([0,T];H) \ \text{and} \ u(0) = x,\\
	+ \infty &\text{otherwise}.
\end{cases}$$
For $u \in C((-\infty, 0);H^{-\alpha})$ we define the function 
$$I_{-\infty }(u) := \frac{1}{2}\int_{-\infty}^{0} \|A^{-\frac{\alpha}{2}}\mathcal{H} (u)(t)\Vert ^2_H \,dt .$$ In this section, we will prove the function $I_{- \infty}$ can be a good rate function for LDP. That is to prove the compactness for level sets of $I_{- \infty}$. To state these results, we introduce the following spaces
$$
	\chi = \left\{u \in C((-\infty,0];H^{-\alpha}): \lim_{t \rightarrow -\infty}\| u(t)\Vert _H =0\right\},\  \chi ^{\phi } = \left\{u \in \chi : u(0) = \phi \right\}.
$$
We endow the space $\chi $ with the topology of uniform convergence on compact intervals. We state the following properties of $I_{- \infty}(u)$.
\begin{proposition}\label{u infty estimate}
	Assume that $u \in \chi $ is such that $I_{- \infty}(u) < \infty$ and $0 \leq \alpha < 1/2$. Then $\sup_{t\leq 0}\|u(t)\Vert_{H^{1-\alpha}} < \infty$ and there is a sequence $t_k \rightarrow - \infty$ as $k \rightarrow \infty$ such that $$\lim _{k \rightarrow \infty}\| u(t_k)\Vert _{H^1} = 0 \ \text{and}\ u \in W^{1,2}((-\infty ,0];H^{2-\alpha};H^{-\alpha}).$$
\end{proposition}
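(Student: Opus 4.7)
Since $I_{-\infty}(u)<\infty$, set $f:=A^{-\alpha/2}\mathcal{H}(u)$, so that $f\in L^2((-\infty,0];H)$ with $\tfrac12\|f\|^2_{L^2((-\infty,0];H)}=I_{-\infty}(u)$ and, on every finite interval $[-T,0]$, $u$ is a mild solution of the skeleton equation (\ref{skt eq 2}) with forcing $A^{\alpha/2}f$ and initial value $u(-T)$. The first step is to derive the basic dissipation estimate globally in time: applying the $H$-energy estimate (\ref{u Gronwall 1}) on $[-T,t]$ for $-T<t\le 0$ and using $\|u(-T)\|_H\to 0$ as $T\to\infty$ (which is available because $u\in\chi$), one obtains
\begin{equation*}
\sup_{t\le 0}\|u(t)\|_H^{\,2}+\int_{-\infty}^{0}\|u(s)\|_{H^1}^{2}\,ds\;\le\;\|f\|_{L^2((-\infty,0];H)}^{2}=2\,I_{-\infty}(u).
\end{equation*}
In particular $\int_{-\infty}^{0}\|u(s)\|_{H^1}^{2}\,ds<\infty$, so there must exist a sequence $t_k\to-\infty$ with $\|u(t_k)\|_{H^1}\to 0$, otherwise $\|u(s)\|_{H^1}$ would be bounded below by some $\delta>0$ on a half-line, contradicting the $L^2$ integrability.

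Next I bound $\sup_{t\le 0}\|u(t)\|_{H^{1-\alpha}}$. For each $k$, $u$ restricted to $[t_k,0]$ is the mild solution of the skeleton equation with initial value $u(t_k)$, so the time-translated form of (\ref{u H1}) gives
\begin{equation*}
\|u(t)\|_{H^{1-\alpha}}^{2}\;\le\;\bigl(\|u(t_k)\|_{H^{1-\alpha}}^{2}+C\|f\|_{L^2((-\infty,0];H)}^{2}\bigr)\exp\!\Bigl(C\int_{t_k}^{t}\|u(s)\|_H^{2}\|u(s)\|_{H^1}^{2}\,ds\Bigr),\qquad t_k\le t\le 0.
\end{equation*}
The exponential is uniformly controlled by $\exp\!\bigl(C\,(\sup_{s\le0}\|u(s)\|_H^{2})\int_{-\infty}^{0}\|u(s)\|_{H^1}^{2}\,ds\bigr)\le \exp(4CI_{-\infty}(u)^{2})$, which is independent of $k$. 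Since $\|u(t_k)\|_{H^{1-\alpha}}\le C\|u(t_k)\|_{H^1}\to 0$, letting $k\to\infty$ at fixed $t\le 0$ yields
\begin{equation*}
\sup_{t\le 0}\|u(t)\|_{H^{1-\alpha}}^{2}\;\le\;C\,I_{-\infty}(u)\,\exp\!\bigl(4CI_{-\infty}(u)^{2}\bigr)\;<\;\infty.
\end{equation*}

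Finally, to show $u\in W^{1,2}((-\infty,0];H^{2-\alpha},H^{-\alpha})$, I apply the shifted versions of (\ref{u_t L2}) and the inequality following (\ref{u H2 }) on each interval $[t_k,0]$. They give
\begin{equation*}
\int_{t_k}^{0}\|\partial_t u(s)\|_{H^{-\alpha}}^{2}\,ds+\int_{t_k}^{0}\|u(s)\|_{H^{2-\alpha}}^{2}\,ds\;\le\;C\Bigl(\sup_{s\le 0}\|u(s)\|_{H^{1-\alpha}}^{2}\int_{-\infty}^{0}\|u(s)\|_{H}^{2}\|u(s)\|_{H^1}^{2}\,ds+\|f\|_{L^2}^{2}+\|u(t_k)\|_{H^{1-\alpha}}^{2}\Bigr),
\end{equation*}
and the right-hand side is bounded uniformly in $k$ by the estimates already obtained. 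Monotone convergence in $k$ then delivers
\begin{equation*}
\int_{-\infty}^{0}\|\partial_t u(s)\|_{H^{-\alpha}}^{2}\,ds+\int_{-\infty}^{0}\|u(s)\|_{H^{2-\alpha}}^{2}\,ds\;<\;\infty,
\end{equation*}
which together with the previous $L^\infty$ bound completes the proof. The main technical point is that the nonlinear estimates of Proposition~\ref{skt estimate} are not, a priori, global-in-time; the device of localising near the low-regularity times $t_k$ and then exhausting $(-\infty,0]$ is what converts the pointwise-in-$T$ bounds into the required global integrability, and keeping the exponential factor under uniform control via the dissipation bound is the only delicate ingredient.
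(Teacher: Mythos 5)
Your proposal is correct and follows essentially the same route as the paper: derive the global dissipation estimate from (\ref{u Gronwall 1}) and the decay $\|u(-T)\|_H\to 0$ built into $\chi$, extract $t_k\to-\infty$ with $\|u(t_k)\|_{H^1}\to 0$ from the $L^2$-integrability of $\|u\|_{H^1}$, then apply the time-shifted estimates (\ref{u H1}), (\ref{u_t L2}), (\ref{u H2 }) on $[t_k,0]$ and let $k\to\infty$. The only cosmetic difference is that the paper locates the sequence $t_k$ by a dyadic mean-value argument, while you argue by contradiction from the finiteness of the $H^1$ dissipation integral; both are valid.
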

\begin{proof}
	Similar to the proof of Proposition \ref{skt estimate 2}, for any $T \geq S \geq 0$ we have that
	$$\| u(-S)\Vert_H^2 + \int_{-T}^{-S} \| u(t)\Vert _{H^1}^2 \,dt \leq \int_{-T}^{-S} \| f(t)\Vert _H^2 \,dt + \| u(-T)\Vert _H^2.$$  
	Since  $I_{-\infty}(u) = \frac{1}{2}\int_{-\infty }^{0} \| f(t)\Vert _H^2 \,dt < \infty$ and $\lim _{t \rightarrow -\infty }\| u(t)\Vert _H^2 = 0$, we can find constant $\bar{C} $ and $T^*$ large enough such that for any $T \geq T^*$ and any $0 \leq S \leq T$, 
 \begin{equation}\label{inequality: H^1}
     \| u(-S)\Vert _H^2 + \int_{-T}^{-S} \| u(t)\Vert _{H^1}^2 \,dt \leq \int_{-\infty }^{-S} \| f(t)\Vert _H^2 \,dt + \| u(-T)\Vert _H^2 \leq \bar{C}. \end{equation}
	 As $u(-S) \in H $, then $u(0)  \in H^{1-\alpha}$ is a direct consequence of Proposition \ref{skt estimate 2}. Moreover, set $S= T_1$ and $T = T_2= 2T_1$. Then we can find $t_1 \in (-T_2, -T_1)$ such that $\| u(t_1)\Vert _{H^1}^2 \leq \frac{\bar{C}}{T_1}$. Next, we set $S = T_2 $ and $T =T_3 = 2 T_2$, then we can find $t_2 \in (-T_3, -T_2)$ such that $\| u(t_2)\Vert _{H^1}^2 \leq \frac{\bar{C}}{T_2}$. Repeating by this procedure, we can find $t_k$ such that $\lim_{k \rightarrow \infty}t_k = -\infty$ and $\lim_{k \rightarrow \infty}\| u(t_k)\Vert _{H^{1-\alpha}} \leq \lim_{k \rightarrow \infty}\| u(t_k)\Vert _{H^1} = 0$.

	Thanks to (\ref{u skt estimate}) and (\ref{4.8a}), we have for $-\infty <s \leq t \leq 0$, 
	\begin{equation*}
		\begin{split}
			\| u(t)\Vert _{H^{1-\alpha}}^2 &\leq C\exp\left(C\int_{s}^{t} \|u(r)\Vert _H^2\| u(r)\Vert _{H^1}^2 \,dr \right)\int_{s}^{t} \| f(r)\Vert _H^2 \,dr \\ &\ \ \ \ + \| u(s)\Vert _{H^{1-\alpha}}^2\exp\left(C\int_{s}^{t} \|u(r)\Vert _H^2\| u(r)\Vert _{H^1}^2 \,dr \right).
		\end{split}
	\end{equation*}
and 
\begin{equation*}
	\begin{split}
		\int_{s}^{0} \| \partial_r u\Vert _{H^{-\alpha}}^2 \,dr  &\leq C\sup_{\rho  \leq 0}\| u(\rho )\Vert _{H^{1-\alpha}}^2\int_{s}^{0} \|u(r)\Vert _H^2\| u(r)\Vert _{H^1}^2 \,dr \\ &\ \ \ \ + C\int_{s}^{0} \| f(r)\Vert _H^2 \,dr + \| u(s)\Vert _{H^{1-\alpha}}^2.
	\end{split}
\end{equation*}
Setting $s = t_k$ and taking the limit as $k \rightarrow \infty$, we get$$\sup_{t \leq 0}\| u(t)\Vert _{H^{1-\alpha}}^2 < \infty$$ and $$\int_{-\infty }^{0} \| \partial_t u\Vert _{H^{-\alpha}}^2 \,dt < \infty.$$ Moreover, by (\ref{u H2 }) and (\ref{4.8a})
\begin{align*}
	\int_{-\infty}^{0} \| u(t)\Vert _{H^{2-\alpha}}^2 \,dt &\leq C \bigg(\int_{-\infty }^{0} \| \partial_t u\Vert _{H^{-\alpha}}^2 \,dt + \int_{-\infty}^{0} \| f(t)\Vert _H^2 \,dt\\ &\ \quad \quad \ \ + \sup_{s \leq 0 }\| u(s)\Vert _{H^{1-\alpha}}^2\int_{-\infty }^{0} \|u(t)\Vert _H^2\| u(t)\Vert _{H^1}^2 \,dt\bigg) < \infty.
\end{align*}

\end{proof}
Before we prove the compactness for the level sets of $I_{- \infty}$, we need to show that $I_{- \infty}$ is lower semi-continuous. The proof of following proposition is analogous to the proof of Proposition 5.4 in \cite{brzezniak2015quasipotential}.
\begin{proposition}\label{p:lsc}
	Suppose $0 \leq \alpha < 1/2$, then the function $I_{-\infty}$ is lower semi-continuous in $\chi $.
\end{proposition}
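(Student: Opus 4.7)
The plan is to exploit weak compactness of the control sequence $f_n := A^{-\alpha/2}\mathcal{H}(u_n)$ in $L^2((-\infty,0]; H)$ combined with an Aubin-Lions compactness argument to pass to the limit in the quadratic nonlinearity. Specifically, I would take $u_n \to u$ in $\chi$ and, without loss of generality, assume $\liminf_n I_{-\infty}(u_n) = L < \infty$ and, after extracting a subsequence, $I_{-\infty}(u_n) \to L$. Setting $f_n := A^{-\alpha/2}\mathcal{H}(u_n)$, the identity $\|f_n\|_{L^2((-\infty,0]; H)}^2 = 2 I_{-\infty}(u_n)$ gives a sequence uniformly bounded in this Hilbert space, and I would extract a further subsequence with $f_n \rightharpoonup f$ weakly, so that weak lower semi-continuity of the norm yields $\tfrac{1}{2}\|f\|_{L^2((-\infty,0]; H)}^2 \leq L$. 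It then suffices to show $f = A^{-\alpha/2}\mathcal{H}(u)$ in $\mathcal{D}'((-\infty,0))$, which by the definition of $I_{-\infty}$ would give $I_{-\infty}(u) \leq L$.

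To identify $f$, I would first derive uniform-in-$n$ bounds by exploiting $u_n \in \chi$. Integrating the basic energy inequality (\ref{u Gronwall 1}) from $-T$ to $-S$ and letting $T \to \infty$ using $\|u_n(-T)\|_H \to 0$ produces
$$\sup_{t \leq 0}\|u_n(t)\|_H^2 + \int_{-\infty}^{0}\|u_n(t)\|_{H^1}^2 \, dt \leq 2 I_{-\infty}(u_n) \leq 2(L+1),$$
uniformly in $n$. The iterative construction in Proposition \ref{u infty estimate} then produces, for each $n$, times $t_k^n \to -\infty$ with $\|u_n(t_k^n)\|_{H^1} \to 0$, and feeding these into (\ref{u H2 })-(\ref{u H1}) and (\ref{u_t L2}) (whose right-hand sides are controlled by the uniform quantities above) yields
$$\sup_n\Bigl(\|u_n\|_{L^\infty((-\infty,0]; H^{1-\alpha})} + \|u_n\|_{L^2((-\infty,0]; H^{2-\alpha})} + \|\partial_t u_n\|_{L^2((-\infty,0]; H^{-\alpha})}\Bigr) < \infty.$$
For each fixed $T > 0$, the Aubin-Lions lemma then provides compact embedding of $W^{1,2}([-T,0]; H^{2-\alpha}, H^{-\alpha})$ into $L^2([-T,0]; L^2)$ (since $H^{2-\alpha}$ is compactly embedded in $L^2$ for $\alpha < 1/2$), so along a further subsequence $u_n \to u$ strongly in $L^2([-T,0]; L^2)$. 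Consequently $u_n^2 \to u^2$ in $L^1([-T,0]; L^1)$, and $D(u_n^2) \to D(u^2)$ in $\mathcal{D}'$. Testing the relation $f_n = A^{-\alpha/2}\mathcal{H}(u_n)$ against $\varphi \in C_c^\infty((-T,0); D(A))$ and using weak $L^2$ convergence of $A^{-\alpha/2}\partial_t u_n$ and $A^{1-\alpha/2} u_n$ to pass to the limit in the linear terms, I would conclude $f = A^{-\alpha/2}\mathcal{H}(u)$ on $[-T,0]$; letting $T \to \infty$ completes the identification.

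The main obstacle is the nonlinear term $D(u_n^2)$ in the negative Sobolev setting. The topology of $\chi$ supplies only uniform convergence on compact intervals in $H^{-\alpha}$, which on its own is too weak to identify the pointwise square $u_n^2$, and the weak convergence in $L^2([-T,0]; H^{2-\alpha})$ obtained from the uniform energy bound is also insufficient. What unlocks the argument is the uniform bound on $\partial_t u_n$ in $L^2((-\infty,0]; H^{-\alpha})$, which must be extracted from the $I_{-\infty}$ control via the delicate interpolation chain culminating in (\ref{u skt estimate})-(\ref{4.8a}); this is exactly what activates Aubin-Lions compactness and upgrades the $H^{-\alpha}$ pointwise-in-time convergence to strong $L^2_{t,x}$ convergence on bounded time intervals, enabling the passage to the limit in $u_n^2$.
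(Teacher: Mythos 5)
Your proposal is correct and follows essentially the same route as the paper: extract a weak $L^{2}$ limit of $f_n = A^{-\alpha/2}\mathcal{H}(u_n)$, establish uniform $W^{1,2}((-\infty,0];H^{2-\alpha},H^{-\alpha})$ bounds via the a priori estimates (\ref{u Gronwall 1}), (\ref{u H1})--(\ref{u_t L2}) and the iterative construction in Proposition \ref{u infty estimate}, use Aubin--Lions compactness to pass to the limit in the nonlinear term, identify the weak limit as $A^{-\alpha/2}\mathcal{H}(u)$, and conclude by weak lower semi-continuity of the $L^2$ norm. The only difference is cosmetic: you compactly embed into $L^2([-T,0];L^2)$ and exploit $u_n^2 \to u^2$ in $L^1_{t,x}$, whereas the paper embeds into $C([-T,0];H^{-\alpha})$ and uses a Besov duality bound $\|(u_n-u)(u_n+u)\|_{H^{-\alpha}} \lesssim \|u_n-u\|_{H^{-\alpha}}\|u_n+u\|_{C^{\alpha+\varepsilon}}$; and you argue directly rather than by contradiction.
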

\begin{proof}
	In order to prove the lower semi-continuity of $I_{-\infty}$, it is sufficient to show that if a $\chi- $valued sequence $\{u_n\}_{n=1}^\infty $ is convergent in $\chi $ to a function $u \in \chi $, then
	\begin{equation}\label{lsc}
		\liminf_{n \rightarrow \infty } I_{-\infty}(u_n) \geq I_{-\infty}(u).
	\end{equation} 
	First, we assume $u \in \chi $ such that $I_{-\infty}(u)= \infty$. We will show that $$\liminf_{n \rightarrow \infty } I_{-\infty}(u_n) = +\infty.$$
	Suppose by contradiction that $\liminf_{n \rightarrow \infty} I_{-\infty}(u_n) < +\infty$, then we can choose a subsequence still denoted as $u_n$ such that for all $n \in \mathbb{N} $
	$$I_{-\infty}(u_n) = \frac{1}{2}\|A^{-\frac{\alpha}{2}} \mathcal{H} (u_n)\Vert^2 _{L^2((-\infty,0];H)} \leq C,$$ for some positive constant $C$. According to Proposition \ref{u infty estimate}, the sequence is bounded in space $W^{1,2}((-\infty ,0];H^{2-\alpha};H^{-\alpha}) $. Hence, we can find $\tilde{u} \in W^{1,2}((-\infty ,0];H^{2-\alpha};H^{-\alpha})$ such that after further extraction of a subsequence 
	$$u_n \rightarrow \tilde{u} , \text{as} \ n\rightarrow\infty, \ \text{weakly in} \ W^{1,2}((-\infty ,0];H^{2-\alpha};H^{-\alpha}).$$ By the uniqueness of the limit,  we know that $u = \tilde{u} $. Then we have $ u \in W^{1,2}((-\infty ,0];H^{2-\alpha};H^{-\alpha})$. By Proposition \ref{u infty estimate}, noting that $\sup_{t \leq 0}\|u_n(t)\Vert_{H^{1-\alpha}} $ only depends on $f$, so there exists a positive constant $\bar{C}$ such that for all $n \in \mathbb{N}$, $\sup_{t \leq 0}\|u_n(t)\Vert_{H^{1-\alpha}} \leq \bar{C}.$ In particular, for fixed $t \leq 0$, $u_n(t)$ is bounded in $H^{1-\alpha}$. Then, we can find a subsequence $u_{n_k^t}(t)$ such that $u_{n_k^t}(t)$ converges to $\bar{u}(t)$ weakly in $H^{1-\alpha}$ as $k \rightarrow \infty$. Since $u_n(t)$ converges to $u(t)$ strongly in $H^{-\alpha}$. By the uniqueness of the limit, we know that $u(t)=\bar{u}(t)$ and $\liminf_{k \rightarrow \infty}\|u_{n_k^t}(t)\Vert_{H^{1-\alpha}} \geq  \|u(t)\Vert_{H^{1-\alpha}}$. Then, $$\sup_{t \leq 0}\|u(t)\Vert_{H^{1-\alpha}} \leq \sup_{t \leq 0}\liminf_{k \rightarrow \infty}\|u_{n_k^t}(t)\Vert_{H^{1-\alpha}} \leq \sup_{t \leq 0}\sup_{n \in \mathbb{N}}\|u_{n}(t)\Vert_{H^{1-\alpha}} = \sup_{n \in \mathbb{N}} \sup_{t \leq 0}\|u_{n}(t)\Vert_{H^{1-\alpha}} \leq \bar{C}.$$
 Moreover, by (\ref{ineq:nonlinear term}) and the Poincar\'e inequality, 
 \begin{align*}
     \left\|\frac{1}{2}D(u^2)\right\Vert_{L^2((-\infty, 0];H^{-\alpha}))} &= \left(\int_{-\infty}^{0}\|Du(t)u(t)\Vert_{H^{-\alpha}}^2 \, dt \right)^{1/2} \\ &\leq C\left(\int_{-\infty}^{0}\|u(t)\Vert_{H^{1-\alpha}}^2\|u(t)\Vert_{H^{2-\alpha}}^2 \, dt \right)^{1/2} \\& \leq C\sup_{t\leq 0}\|u(t)\Vert_{H^{1-\alpha}} \left\|u\right\Vert_{L^2((-\infty,0];H^{2-\alpha})} < +\infty.   
     \end{align*}
     Therefore, 
 $$I_{-\infty}(u) = \frac{1}{2}\left\|A^{-\frac{\alpha}{2}}\left( \partial_tu - \Delta u - \frac{1}{2}D(u^2)\right)\right\Vert _{L^2((-\infty,0];H)} < +\infty,$$ which contradicts with our assumption.

	Thus, we can assume that $I_{-\infty}(u) < \infty$. By Proposition \ref{u infty estimate}, we have $u(0) \in H^{1-\alpha}$ and $u \in W^{1,2}((-\infty ,0];H^{2-\alpha};H^{-\alpha})$. If (\ref{lsc}) is not true, then we can choose some $\epsilon >0$ and a subsequence still denoted by $u_n$ such that
 \begin{equation} \label{4.13a}
     I_{-\infty}(u_n) < I_{-\infty}(u) -\epsilon, \ n \in \mathbb{N}.  \end{equation}
  Hence, we have $f_n := A^{-\frac{\alpha}{2}}\mathcal{H} (u_n)$ is bounded in $L^2((-\infty , 0];H)$. Proposition \ref{u infty estimate} implies that $\left\{u_n\right\}_{n \in \mathbb{N} }$ is bounded in $W^{1,2}((-\infty ,0];H^{2-\alpha};H^{-\alpha})$. Then we can find $\tilde{u} \in W^{1,2}((-\infty ,0];H^{2-\alpha};H^{-\alpha})$ and a subsequence $u_n$ such that 
	\begin{equation}
		\label{4.9a}
		u_n \rightarrow \tilde{u} , \text{as} \ n\rightarrow\infty, \ \text{weakly in} \ W^{1,2}((-\infty ,0];H^{2-\alpha};H^{-\alpha}).
	\end{equation}
	Similar as before, we have $u = \tilde{u} $. Since the sequence $\{f_n\}$ associated with the subsequence $u_n$ selected above is bounded in $L^2((- \infty ,0];H)$, we can find $\tilde{f}$ and a new subsequence $f_n$ such that
	$$f_n \rightarrow \tilde{f} , \text{as} \ n\rightarrow\infty, \ \text{weakly in} \ L^2((- \infty ,0];H).$$ Now associated with the subsequence $f_n$, the new subsequence of $u_n$,  still denoted by $u_n$, also satisfies (\ref{4.9a}). Next, we will show $\tilde{f} = f :=A^{-\frac{\alpha}{2}} \mathcal{H} (u)$.

	For any fixed $T>0$ and any $\varphi \in L^2([- T ,0];H^{1})$, by the weak convergence of $f_n$ to $\tilde{f} $ in $L^2((- \infty ,0];H)$, we get 
	\begin{equation}\label{f align 1}
		\begin{split}
			&\ \ \ \ \int_{-T}^{0} \langle \tilde{f}(t),\varphi (t)\rangle _H  \,dt \\&=  \lim_{n \rightarrow \infty}\int_{-T}^{0} \langle f_n(t),\varphi (t)\rangle _H  \,dt \\ &= \lim_{n \rightarrow \infty}\int_{-T}^{0} \langle A^{-\frac{\alpha}{2}}\partial_tu_n,\varphi (t)\rangle _H  \,dt - \lim_{n \rightarrow \infty}\int_{-T}^{0} \langle A^{-\frac{\alpha}{2}}\Delta u_n(t),\varphi (t)\rangle _H  \,dt \\&\ \ \ \ - \lim_{n \rightarrow \infty}\frac{1}{2}\int_{-T}^{0} \langle A^{-\frac{\alpha}{2}} D(u_n^2(t)),\varphi (t)\rangle _H  \,dt.
		\end{split}
	\end{equation}
Since $A^{\frac{\alpha}{2}} \varphi \in L^2([-T,0];H^{1-\alpha}) \subset L^2([-T,0]; H^{-\alpha})$, $A^{\frac{\alpha}{2}-1} \varphi \in L^2([-T,0];H^{3-\alpha}) \subset L^2([-T,0];\\H^{2-\alpha})$, by the weak convergence of $u_n$ to $u$ in $W^{1,2}((-\infty ,0];H^{2-\alpha};H^{-\alpha})$, we have 
\begin{equation}
\begin{split}\label{f align 2}
\lim_{n \rightarrow \infty}\int_{-T}^{0} \langle  A^{-\frac{\alpha}{2}} \partial_tu_n,\varphi (t)\rangle _H  \,dt &=\lim_{n \rightarrow \infty} \int_{-T}^{0} \langle \partial_tu_n, A^{\frac{\alpha}{2}} \varphi (t)\rangle _{H^{-\alpha}}  \,dt \\&= \int_{-T}^{0} \langle \partial_tu, A^{\frac{\alpha}{2}} \varphi (t)\rangle _{H^{-\alpha}}  \,dt
\end{split}
 \end{equation} 
and 
\begin{equation}\label{f align 3}\begin{split}
    	-\lim_{n \rightarrow \infty}\int_{-T}^{0} \langle A^{-\frac{\alpha}{2}} \Delta u_n(t),\varphi (t)\rangle _H  \,dt &= \lim_{n \rightarrow \infty}\int_{-T}^{0} \langle u_n(t), A^{\frac{\alpha}{2}-1} \varphi (t)\rangle _{H^{2-\alpha}}  \,dt \\&= \int_{-T}^{0} \langle u(t), A^{\frac{\alpha}{2}-1}  \varphi (t)\rangle _{H^{2-\alpha}}   \,dt
     \end{split}
 \end{equation}
Moreover, by integration by parts formula 
\begin{align*}
		&\ \ \ \ \lim_{n \rightarrow \infty}\left| \int_{-T}^{0} \langle A^{-\frac{\alpha}{2}}\left[D(u_n^2(t)) - D(u^2(t))\right],\varphi (t)\rangle _H  \,dt \right\vert\\
        &= \lim_{n \rightarrow \infty}\left| \int_{-T}^{0} \int_{0}^{1} A^{-\frac{\alpha}{2}}[(u_n(t) - u(t))(u_n(t)+u(t))]\left(D\varphi (t)\right) \,d\xi  \,dt\right\vert  \\ &\leq \liminf_{n \rightarrow \infty }\int_{-T}^{0} \left\|(u_n(t) -u(t))(u_n(t)+u(t))\right\Vert_{H^{-\alpha}} \|\varphi(t)\Vert_{H^1}\,dt \\ &\leq \liminf_{n \rightarrow \infty }\int_{-T}^{0}\|u_n(t)-u(t)\Vert_{H^{-\alpha}}\|u_n(t)+u(t)\Vert_{C^{\alpha+\varepsilon}}\|\varphi(t)\Vert_{H^1}\,dt\\& \leq \liminf_{n \rightarrow \infty }\|u_n-u\Vert _{C([-T,0];H^{-\alpha})}\| u_n+u\Vert _{L^2([-T,0];C^{\alpha+\varepsilon})}\| \varphi (t)\Vert _{L^2([- T ,0];H^1)}.
\end{align*}
Here, $\epsilon = \frac{1-2\alpha}{3}$ as in the proof of Proposition \ref{skt estimate}.
Thanks to that $u_n$ and $u$ are bounded in $W^{1,2}((-\infty ,0];H^{2-\alpha};H^{-\alpha})$, by Sobolev inequality and  the Poincar\'e inequality, we get $$\left\| u_n+u\right\Vert _{L^2([-T,0];C^{\alpha+\varepsilon})} \leq C\left\| u_n+u\right\Vert _{L^2([-T,0];H^{1/2+\alpha+ \varepsilon})} \leq C\left\| u_n+u\right\Vert _{L^2((-\infty,0];H^{2-\alpha})} < \infty.$$ By a standard compactness argument, $W^{1,2}([-T ,0];H^{2-\alpha};H^{-\alpha}) $ is compactly embedded into $C([-T ,0];H^{-\alpha})$. Hence, $u_n$ converges to $u$ in $C([-T ,0];H^{-\alpha})$. It then follows that
\begin{equation}\label{f align 4}
	\lim_{n \rightarrow \infty}\frac{1}{2}\int_{-T}^{0} \langle A^{-\frac{\alpha}{2}}D(u_n^2(t)),\varphi (t)\rangle _H  \,dt = \frac{1}{2}\int_{-T}^{0} \langle A^{-\frac{\alpha}{2}} D(u^2(t)),\varphi (t)\rangle _H  \,dt.
\end{equation}
 Combining (\ref{f align 1}), (\ref{f align 2}), (\ref{f align 3}) and (\ref{f align 4}) together, we get 
 \begin{align*}
	\int_{-T}^{0} \langle \tilde{f}(t),\varphi (t)\rangle _H  \,dt &= \int_{-T}^{0} \langle A^{-\frac{\alpha}{2}} \partial_tu,\varphi (t)\rangle _H  \,dt - \int_{-T}^{0} \langle A^{-\frac{\alpha}{2}} \Delta u(t),\varphi (t)\rangle _H  \,dt\\
     &\ \ \ \ - \frac{1}{2}\int_{-T}^{0} \langle A^{-\frac{\alpha}{2}} D(u^2(t)),\varphi (t)\rangle _H  \,dt \\ &= \int_{-T}^{0} \langle f(t),\varphi (t)\rangle _H  \,dt
 \end{align*}
 for all $\varphi \in L^2((- \infty ,0];H^1)$. Since $ L^2((- \infty ,0];H^1)$ is dense in $L^2((- \infty ,0];H)$, it follows that for any $T>0$, $f =\tilde{f} $ in $L^2([-T,0];H)$. Then, $\|f-\tilde{f}\Vert_{L^2((-\infty,0];H)} =0$. Thus,$$f_n \rightarrow f , \text{as} \ n\rightarrow\infty, \ \text{weakly in} \ L^2((- \infty ,0];H)$$ and 
 \begin{align*}
	 \liminf_{n \rightarrow \infty}I_{-\infty}(u_n) &= \liminf_{n \rightarrow \infty}\frac{1}{2}\left\| f_n\right\Vert _{L^2((-\infty,0];H)}\\ &\geq \frac{1}{2}\left\| f\right\Vert _{L^2((-\infty,0];H)} = I_{-\infty}(u),
 \end{align*}
 which contradicts with our assumption (\ref{4.13a}). The proof is completed.
\end{proof}

Now we are ready to prove the main result of this section.
\begin{theorem}\label{level infty}
	Suppose $0 \leq \alpha < 1/2$, then the operator $I_{-\infty}$ has compact level sets in $\chi $.
\end{theorem}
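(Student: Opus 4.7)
The plan is to combine the lower semi-continuity of $I_{-\infty}$ with a sequential compactness argument; the main obstacle will be to guarantee that the sequential limit retains the decay condition $\lim_{t\to-\infty}\|u(t)\|_H = 0$ built into $\chi$.

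First, by Proposition \ref{p:lsc} the level set $\Phi_\infty(r):=\{u\in\chi:I_{-\infty}(u)\le r\}$ is closed in $\chi$, so it suffices to show that any sequence in $\Phi_\infty(r)$ has a subsequence converging in $\chi$ to some element of $\Phi_\infty(r)$. Given $\{u_n\}\subset\Phi_\infty(r)$, set $f_n:=A^{-\alpha/2}\mathcal{H}(u_n)$, so $\|f_n\|_{L^2((-\infty,0];H)}^2 \le 2r$. Inspection of the proof of Proposition \ref{u infty estimate} (in particular the bound (\ref{inequality: H^1}) together with (\ref{u H1})--(\ref{u_t L2})) shows that all estimates there depend only on $\|f\|_{L^2((-\infty,0];H)}^2$, and are therefore uniform in $n$; in particular $\{u_n\}$ is uniformly bounded in $L^\infty((-\infty,0];H^{1-\alpha})\cap W^{1,2}((-\infty,0];H^{2-\alpha},H^{-\alpha})$. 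By the Aubin--Lions compact embedding $W^{1,2}([-T,0];H^{2-\alpha},H^{-\alpha})\hookrightarrow C([-T,0];H^{-\alpha})$ and a diagonal extraction over $T\nearrow\infty$, I can extract a subsequence (still denoted $u_n$) with $u_n\to u$ in $C([-T,0];H^{-\alpha})$ for every $T>0$, which is precisely convergence in the topology of $\chi$, while $f_n\rightharpoonup f$ weakly in $L^2((-\infty,0];H)$ with $\tfrac{1}{2}\|f\|_{L^2}^2\le r$.

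Next I would identify $f=A^{-\alpha/2}\mathcal{H}(u)$ by passing to the weak limit in the equation on each interval $[-T,0]$, following verbatim the second half of the proof of Proposition \ref{p:lsc}: the linear terms converge by the weak $W^{1,2}$-convergence of $u_n$, while the nonlinear term is handled via the factorisation $D(u_n^2)-D(u^2)=D((u_n-u)(u_n+u))$ together with the strong convergence $u_n\to u$ in $C([-T,0];H^{-\alpha})$ and the uniform bound on $u_n+u$ in $L^2([-T,0];C^{\alpha+\varepsilon})$ provided by the Sobolev embedding $H^{1/2+\alpha+\varepsilon}\hookrightarrow C^{\alpha+\varepsilon}$. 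Weak lower semi-continuity of the $L^2$-norm then already gives $I_{-\infty}(u)\le r$.

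The hard part will be to verify $u\in\chi$, since at this stage $u$ is only known to solve the skeleton equation on each $[-T,0]$ with forcing $f\in L^2((-\infty,0];H)$, so the decay $\|u(\tau)\|_H\to 0$ as $\tau\to-\infty$ must be extracted from the equation itself. For this I would use an exponentially weighted energy estimate: the computation of Proposition \ref{skt estimate} applied to $u$ yields $\tfrac{d}{dt}\|u(t)\|_H^2 + \|u(t)\|_{H^1}^2 \le \|f(t)\|_H^2$, and combining with the Poincar\'e inequality $\pi^2\|u(t)\|_H^2\le\|u(t)\|_{H^1}^2$, multiplying by $e^{\pi^2 t}$ and integrating from $-T$ to $\tau$ produces
\begin{equation*}
\|u(\tau)\|_H^2 \le e^{-\pi^2(T+\tau)}\|u(-T)\|_H^2 + \int_{-T}^{\tau}e^{\pi^2(t-\tau)}\|f(t)\|_H^2\,dt.
\end{equation*}
The uniform bound $\|u_n(-T)\|_H^2\le\int_{-\infty}^{-T}\|f_n(t)\|_H^2\,dt\le 2r$ from (\ref{inequality: H^1}) transfers, via weak-$H$ compactness at each fixed time and uniqueness of the limit in $H^{-\alpha}$, to $\|u(-T)\|_H\le\sqrt{2r}$; letting $T\to\infty$ then kills the first term, leaving $\|u(\tau)\|_H^2 \le \int_{-\infty}^{\tau}\|f(t)\|_H^2\,dt\to 0$ as $\tau\to-\infty$ because $f\in L^2((-\infty,0];H)$. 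Hence $u\in\Phi_\infty(r)$, and the level set is compact.
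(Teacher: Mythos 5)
Your proof is correct and follows the same overall scaffolding as the paper's: uniform $W^{1,2}((-\infty,0];H^{2-\alpha};H^{-\alpha})$ bounds from Proposition~\ref{u infty estimate}, Aubin--Lions compactness plus a diagonal extraction to get convergence in $\chi$, identification of the limit equation by the argument of Proposition~\ref{p:lsc}, and weak lower semi-continuity of the $L^2$-norm. Where you deviate is in the verification that the limit $u$ actually lies in $\chi$. The paper establishes the decay $\|u(t)\|_H \to 0$ by reusing the pigeonhole construction from Proposition~\ref{u infty estimate}: since $\int\|u\|_{H^1}^2\,dt$ is finite, one can extract a sequence $t_k\to-\infty$ with $\|u(t_k)\|_{H^1}\to 0$, and then for any $\delta>0$ pick $S$ so the $f$-tail is small and a $t_k<-S$ with $\|u(t_k)\|_H^2\le\delta/2$, after which the integrated energy bound (\ref{inequality: H^1}) gives $\|u(t)\|_H^2\le\delta$ for all $t\in[t_k,-S]$. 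You instead multiply $\frac{d}{dt}\|u\|_H^2+\|u\|_{H^1}^2\le\|f\|_H^2$ by $e^{\pi^2 t}$ after Poincar\'e and let the explicit exponential damping kill the boundary term $e^{-\pi^2(T+\tau)}\|u(-T)\|_H^2$, which is uniformly bounded once the bound $\|u_n(-T)\|_H^2\le 2r$ is transferred to $u$ by pointwise weak compactness. This buys you a more direct and quantitative decay estimate $\|u(\tau)\|_H^2\le\int_{-\infty}^{\tau}\|f(t)\|_H^2\,dt$ and avoids constructing the auxiliary sequence $\{t_k\}$ for $u$; the trade-off is that you must separately justify $\|u(-T)\|_H\le\sqrt{2r}$ for every $T$, which you do correctly. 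Both arguments are sound and of comparable length; yours makes the exponential stability of the linear part explicit, while the paper's remains entirely at the level of the integrated inequality (\ref{inequality: H^1}).
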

\begin{proof}
	We need to prove that every sequence $\{u_n\}$ in $\chi $ such that $I_{-\infty}(u_n) \leq r$, for any $n \in \mathbb{N} $, has a subsequence still denoted by $u_n$ convergent in $\chi $ to some $u \in \chi $ such that $I_{-\infty}(u)\leq r$. According to Proposition \ref{u infty estimate}, there exists $M >0$ such that $\left\| u_n\right\Vert _{W^{1,2}((-\infty ,0];H^{2-\alpha};H^{-\alpha})} \leq M$, $n \in \mathbb{N} $. Then, we can find $u \in W^{1,2}((-\infty ,0];H^{2-\alpha};H^{-\alpha})$ and a subsequence of $u_n$ that is weakly convergent to $u$ in $W^{1,2}((-\infty ,0];H^{2-\alpha};H^{-\alpha})$. 
 By a standard compactness argument, the space $W^{1,2}([-T ,0];H^{2-\alpha};H^{-\alpha})$ is compact embedded into $C([-T,0];H^{-\alpha})$ for $T>0$. Thus, for each $T>0$ we can extract a subsequence of $u_n$ which is strongly convergent in $C([-T,0];H^{-\alpha})$. By the uniqueness of the limit we know that the limit is equal to the restriction of $u$ to the interval $[-T,0]$. Define $f:=A^{-\frac{\alpha}{2}}\mathcal{H}(u)$ and $f_n:=A^{-\frac{\alpha}{2}}\mathcal{H}(u_n)$. After applying similar arguments as in Proposition \ref{p:lsc}, it follows that there exists a  positive constant $\bar{C}$, which does not depend on $T$, such that $$\sup_{t \in [-T,0]}\|u(t)\Vert_{H} \leq \sup_{t \in [-T,0]}\|u(t)\Vert_{H^{1-\alpha}} \leq \bar{C}.$$ In addition, we have $$f_n \rightarrow f , \text{as} \ n\rightarrow\infty, \ \text{weakly in} \ L^2((- \infty ,0];H)$$ and 
 \begin{equation*}
	\liminf_{n \rightarrow \infty}\left\| f_n\right\Vert _{L^2((-\infty,0];H)}\\ \geq \left\| f\right\Vert _{L^2((-\infty,0];H)}. 
 \end{equation*} 
 Thus, by (\ref{inequality: H^1}) and a similar argument as in Proposition \ref{u infty estimate}, there exists a sequence $t_k \rightarrow -\infty$ as $k \rightarrow \infty$ such that
 $$\lim_{k \rightarrow \infty}\|u(t_k)\Vert_H \leq \lim_{k \rightarrow \infty}\|u(t_k)\Vert_{H^1} =0.$$ For any $\delta > 0 $, there exits $S$ sufficiently large such that $\int_{-\infty}^{-S}\|f(s)\Vert_H^2 \,ds \leq \delta/2 $. Then, for any $t \leq -S$, by (\ref{inequality: H^1}), we get $$\|u(t)\Vert_H^2 \leq \int_{-\infty}^{-S}\|f(s)\Vert_H^2 \,ds + \|u(-T)\Vert_H^2 \leq \delta,$$ where $T \in \{t_k \}_{k \in \mathbb{N}}$ is chosen to be large enough such that $-T < t$ and $\|u(-T)\Vert_H^2 \leq \delta/2.$ Hence, we have $u \in \chi $. Moreover, by employing the Helly's diagonal procedure, we can find a subsequence of the sequence after extraction which is convergent to $u$ in $\chi $. As $I_{-\infty }$ is lower semi-continuous, we have 
	$$I_{-\infty }(u) \leq \liminf_{n \rightarrow \infty }I_{-\infty }(u_n) \leq r.$$ The proof is completed.
\end{proof}

\subsection{The quasi-potentials}
In this section, we will prove that the function $U$ can be a good rate function for LDP in $H^{-\alpha}$ that is the level sets of $U$ is compact in $H^{-\alpha}$. Some steps of the proof in this section are analogous to those used in Theorem 6.2 of \cite{brzezniak2015quasipotential}. 
\begin{proposition}\label{U phi H1}
	Suppose $0 \leq \alpha < 1/2$, then $$U^\alpha(\phi )<\infty {\rm \ if \  and \ only \ if\ } \phi \in H^{1-\alpha}.$$
\end{proposition}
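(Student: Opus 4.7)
I would prove the two implications separately. For the necessity direction ($U^\alpha(\phi) < \infty \Rightarrow \phi \in H^{1-\alpha}$), assume $U^\alpha(\phi) < \infty$. Then there exist $T > 0$ and $u \in C([0,T]; H^{-\alpha})$ with $u(0) = 0$, $u(T) = \phi$, and $I_T(u) < \infty$, which means $u$ is the mild solution of the skeleton equation (\ref{skt eq 2}) with initial value $0 \in H^{1-\alpha}$ and control $f = A^{-\alpha/2}\mathcal{H}(u) \in L^2([0,T]; H)$. Proposition \ref{skt estimate} then gives $u \in L^\infty([0,T]; H^{1-\alpha}) \cap W^{1,2}([0,T]; H^{2-\alpha}, H^{-\alpha})$, and the standard Lions--Magenes embedding associated to the Gelfand triple $H^{2-\alpha} \hookrightarrow H^{1-\alpha} \hookrightarrow H^{-\alpha}$ yields $u \in C([0,T]; H^{1-\alpha})$; in particular $\phi = u(T) \in H^{1-\alpha}$.

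For the sufficiency direction ($\phi \in H^{1-\alpha} \Rightarrow U^\alpha(\phi) < \infty$), I would construct an explicit path with finite action. Fix any $T > 0$ and set
$$
u(t) := \frac{t}{T}\, S(T-t)\phi, \quad t \in [0,T].
$$
Then $u \in C([0,T]; H^{1-\alpha}) \subset C([0,T]; H^{-\alpha})$, $u(0) = 0$, $u(T) = \phi$, and a direct computation gives $\partial_t u + Au = (1/T) S(T-t)\phi$, so $\mathcal{H}(u)(t) = (1/T) S(T-t)\phi - \frac{1}{2}D(u(t)^2)$. The linear term is trivially controlled: $\|A^{-\alpha/2}S(T-t)\phi\|_H \le \|\phi\|_H$ uniformly in $t$. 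For the nonlinear term, I would invoke the estimate (\ref{ineq:nonlinear term}) established in the proof of Proposition \ref{skt estimate}, namely
$$
\|Du(t)\, u(t)\|_{H^{-\alpha}}^2 \le C\, \|u(t)\|_{H^{1-\alpha}}^{3/2}\, \|u(t)\|_{H^{\alpha+\varepsilon}}^{3/2}\, \|u(t)\|_{H^{2-\alpha}}
$$
with $\varepsilon = (1-2\alpha)/3$. Contractivity of $S$ on each $H^s$ and the embedding $H^{1-\alpha} \hookrightarrow H^{\alpha+\varepsilon}$ (valid for $\alpha < 1/2$) bound the first two factors uniformly by $\|\phi\|_{H^{1-\alpha}}$. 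The parabolic smoothing estimate $\|A^{1/2}S(s)\|_{L(H)} \le C s^{-1/2}$ gives $\|u(t)\|_{H^{2-\alpha}} \le C t\,(T\sqrt{T-t})^{-1}\,\|\phi\|_{H^{1-\alpha}}$, so that
$$
\int_0^T \|D(u(t)^2)\|_{H^{-\alpha}}^2\, dt \le C\,\|\phi\|_{H^{1-\alpha}}^4 \int_0^T \frac{t}{T\sqrt{T-t}}\, dt < \infty,
$$
which yields $I_T(u) < \infty$ and hence $U^\alpha(\phi) \le I_T(u) < \infty$.

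The main obstacle is the nonlinear integrability on the constructed path: since $\phi$ lives only in $H^{1-\alpha}$ and not in $H^{2-\alpha}$, the path fails to belong to $L^2([0,T]; H^{2-\alpha})$ and $\|u(t)\|_{H^{2-\alpha}}$ develops a $(T-t)^{-1/2}$ singularity as $t \to T$. The argument survives precisely because inequality (\ref{ineq:nonlinear term}) depends on $\|u\|_{H^{2-\alpha}}$ only \emph{linearly} rather than quadratically, making $t\,(T-t)^{-1/2}$ integrable on $[0,T]$. If one worked with any estimate that required $\|u\|_{H^{2-\alpha}}^2$, the construction would fail and a more delicate time-dependent regularization of $\phi$ would be needed.
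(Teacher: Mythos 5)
Your necessity argument is essentially the paper's: invoke Proposition \ref{skt estimate} to get $u \in L^\infty([0,T];H^{1-\alpha}) \cap W^{1,2}([0,T];H^{2-\alpha},H^{-\alpha})$ and read off $\phi = u(T) \in H^{1-\alpha}$ (the Lions--Magenes continuity into $H^{1-\alpha}$ that you mention is indeed the implicit justification). The sufficiency direction, however, contains a fatal computational error.

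For $u(t) = \tfrac{t}{T}S(T-t)\phi$ you claim $\partial_t u + Au = \tfrac{1}{T}S(T-t)\phi$. But $\partial_t\bigl[S(T-t)\phi\bigr] = +\,A\,S(T-t)\phi$ (not $-AS(T-t)\phi$), since $S(\tau)=e^{-A\tau}$ and $\tau = T-t$, so
$$
\partial_t u + Au \;=\; \frac{1}{T}S(T-t)\phi \;+\; \frac{2t}{T}\,A\,S(T-t)\phi.
$$
The second term is the problem: $\|A^{-\alpha/2}\cdot \tfrac{2t}{T}AS(T-t)\phi\|_H = \tfrac{2t}{T}\|A^{1/2}S(T-t)A^{(1-\alpha)/2}\phi\|_H \le C\tfrac{t}{T}(T-t)^{-1/2}\|\phi\|_{H^{1-\alpha}}$, so its contribution to $\int_0^T \|A^{-\alpha/2}\mathcal{H}(u)\|_H^2\,dt$ behaves like $\int_0^T (T-t)^{-1}\,dt$, which diverges logarithmically whenever $\phi \in H^{1-\alpha}\setminus H^{2-\alpha}$. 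No time-modulation $g(t)S(T-t)\phi$ with $g(0)=0$, $g(T)=1$ can save this: the requirement $g(T)=1$ forces $\int_0^T g(t)^2 (T-t)^{-1}\,dt = \infty$. Your diagnostic paragraph correctly identifies that the nonlinear term survives because inequality (\ref{ineq:nonlinear term}) is linear in $\|u\|_{H^{2-\alpha}}$, but you missed that the drift term $Au$ itself enters $\mathcal{H}(u)$ \emph{quadratically} after squaring, and that is where the divergence sits.

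The paper's construction avoids this precisely because no single linear-in-$t$ path ending at a generic $H^{1-\alpha}$ element can have finite action. Instead the paper first runs the \emph{uncontrolled} Burgers flow $u_1$ from $\phi$ for a short time; by Proposition \ref{skt estimate} one can choose $t_0$ with $u_1(t_0) \in H^{2-\alpha}$. It then concatenates the plain straight line $u_2(t) = \tfrac{t}{T-t_0}u_1(t_0)$ (whose action is finite \emph{because} the endpoint now lies in $H^{2-\alpha}$, so $Au_2(t)$ is harmlessly in $H^{-\alpha}$) with the time-reversed uncontrolled flow $t \mapsto u_1(T-t)$, whose control is simply $-2\partial_t u_1$ and lies in $L^2$ by the $W^{1,2}$ bound. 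The instantaneous parabolic regularization step is the essential ingredient you would need to add.
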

\begin{proof}
	Assume $U^\alpha(\phi ) < \infty $. According to the definition of quasi-potential (\ref{quasi potential 1}), for any $\varepsilon^\star>0$,  we can find $T>0$ and $u \in C([0,T];H^{-\alpha})$ such that $u(0) = 0$, $u(T)=\phi $, 
	$f \in L^2([0,T];H)$ with
	$$\partial _t u = \Delta  u(t) + \frac{1}{2}D(u^2(t)) + A^{\frac{\alpha}{2}}f(t)$$ and $I_T(u) - \varepsilon^\star < U^\alpha(\phi)$. By Proposition \ref{skt estimate}, we get $u \in L^\infty([0,T];H^{1-\alpha})$. Hence, $u(T)=\phi \in H^{1-\alpha}$.

	Conversely, assume $\phi \in H^{1-\alpha}$. For any $T>0$ , we find $u_1$ satisfying
	\begin{equation*}
	\begin{cases}
		\partial _t u_1(t) = \Delta  u_1(t) + \frac{1}{2}D(u_1^2(t)), \\
		u_1(0) = \phi  \in H^{1-\alpha},
	\end{cases}
	\end{equation*}
	then by Proposition \ref{skt estimate}, we get $u_1 \in W^{1,2}([0,T]; H^{2-\alpha}, H^{-\alpha})$. Hence, there exists $t_0 \in (0,T)$ such that $u_1(t_0) \in H^{2-\alpha}$.

	Next, we define $$u_2(t):= \frac{t}{T-t_0}u_1(t_0), t \in [0,T-t_0].$$ Notice that $u_2(0)=0$, $u_2(T-t_0)= u_1(t_0) \in H^{2-\alpha}$ and 
	$$\mathcal{H} (u_2(t))= \frac{1}{T-t_0}u_1(t_0)-\frac{t}{T-t_0}\Delta u_1(t_0) - \frac{t^2}{(T-t_0)^2}D\left(u_1(t_0)\right)u_1(t_0).$$ Therefore, we have
	\begin{align*}
		&\ \ \ \ \frac{1}{2}\int_{0}^{T-t_0} \left\| A^{-\frac{\alpha}{2}}\mathcal{H} \left(u_2(t)\right)\right\Vert _H^2 \,dt \\ &\leq \frac{3}{2}\int_{0}^{T-t_0} \Bigg[ \frac{1}{(T-t_0)^2} \| u_1(t_0)\Vert _{H^{-\alpha}}^2 + \frac{t^2}{(T-t_0)^2}\| u_1(t_0)\Vert _{H^{2-\alpha}}^2 \\ &\ \ \ \ + \frac{t^4}{(T-t_0)^4}\| Du_1(t_0) u_1(t_0)\Vert_{H^{-\alpha}}^2 \Bigg] \,dt \\ &\leq \frac{3}{2(T-t_0)}\| u_1(t_0)\Vert _{H^{-\alpha}}^2 + \frac{T-t_0}{2}\| u_1(t_0)\Vert _{H^{2-\alpha}}^2 + \frac{3(T-t_0)}{10}\| u_1(t_0)\Vert _{H^{2-\alpha}}^4\\ & < \infty,
	\end{align*}
where the second inequality is due to (\ref{ineq:nonlinear term}) and the Poincar\'e inequality.
	Finally, we define 
	$$u(t):=\begin{cases}
		u_2(t), &t \in [0,T-t_0], \\ 
		u_1(T-t), &t \in [T-t_0,T].
	\end{cases}$$
	It is easy to verify $u(0)= 0$, $u(T)= \phi $ and $u \in C([0,T];H^{-\alpha})$. Moreover, since $u_1 \in W^{1,2}(0,T; H^{2-\alpha}, H^{-\alpha})$, we have 
	\begin{align*}
	\frac{1}{2}\int_{T-t_0}^{T} \left\| A^{-\frac{\alpha}{2}}\mathcal{H} \left(u_1(T-t)\right)\right\Vert _H^2 \,dt &= \frac{1}{2}\int_{0}^{t_0} \left\| -\partial_t u _1 (t) - \Delta u_1(t) - \frac{1}{2}D\left(u_1^2(t)\right) \right\Vert _{H^{-\alpha}}^2 \,dt \\ &= \frac{1}{2}\int_{0}^{t_0} \left\| -2\partial_t u _1 (t) \right\Vert _{H^{-\alpha}}^2 \,dt\\ &< \infty.
	\end{align*}
	 Then, \begin{align*}
		I_T(u) &= \frac{1}{2}\int_{0}^{T-t_0} \left\|A^{-\frac{\alpha}{2}} \mathcal{H} \left(u_2(t)\right)\right\Vert _H^2 \,dt + \frac{1}{2}\int_{T-t_0}^{T} \left\|A^{-\frac{\alpha}{2}} \mathcal{H} \left(u_1(T-t)\right)\right\Vert _H^2 \,dt < \infty.
	\end{align*}
	Thus, by the definition of $U^\alpha(\phi )$, it is easy to see that
	$$ U^\alpha(\phi ) \leq I_T(u) < \infty.$$
\end{proof}

In the following, we prove two lemmas that will be used later. 

\begin{lemma}\label{lemma 2}
	Suppose $0 \leq \alpha < 1/2$, then for every $T>0$ and $\varepsilon >0$, there exists $\eta >0$, such that for any $y \in H^{1-\alpha}$ and $\| y\Vert _{H^{1-\alpha}}\leq\eta $, we can find 
	$$v \in W^{1,2}([0 ,T];H^{2-\alpha};H^{-\alpha}),$$ with $I_T(v)<\varepsilon $, $v(0)=0$ and $v(T)=y$.
\end{lemma}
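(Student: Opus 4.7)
The plan is to adapt the construction employed in Proposition \ref{U phi H1} (concatenation of a linear interpolation with the time-reversed Burgers flow) and to quantify the resulting action, so that $I_T(v) \leq C(T)\|y\|_{H^{1-\alpha}}^2$ for sufficiently small $y$. Once this is achieved, $\eta$ is chosen to make this bound lie below $\varepsilon$.

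Let $u_1$ solve the deterministic Burgers equation $\partial_s u_1 + Au_1 = \tfrac{1}{2} D(u_1^2)$ on $[0,T]$ with $u_1(0)=y$, and apply the a priori estimates (\ref{u Gronwall 1 b}), (\ref{u H1}), (\ref{u_t L2}) and (\ref{u H2 }) of Proposition \ref{skt estimate} with $f\equiv 0$. The Gronwall exponent involves $\int_0^t\|u_1\|_H^2\|u_1\|_{H^1}^2\,ds\leq\|y\|_H^4\leq C\|y\|_{H^{1-\alpha}}^4$, so by restricting to $\|y\|_{H^{1-\alpha}}\leq\eta_0$ for a fixed small $\eta_0$ the exponential stays bounded and one obtains
\begin{equation*}
\|u_1\|_{L^\infty([0,T];H^{1-\alpha})}^2 + \|\partial_s u_1\|_{L^2([0,T];H^{-\alpha})}^2 + \|u_1\|_{L^2([0,T];H^{2-\alpha})}^2 \leq C(T)\|y\|_{H^{1-\alpha}}^2.
\end{equation*}
The mean-value theorem applied on $(0,T/2)$ then produces $t_0\in(0,T/2)$ with $\|u_1(t_0)\|_{H^{2-\alpha}}^2\leq \tfrac{C(T)}{T}\|y\|_{H^{1-\alpha}}^2$, and I will define
\begin{equation*}
v(t) = \begin{cases} \dfrac{t}{T-t_0}\,u_1(t_0), & t \in [0, T-t_0], \\[0.5ex] u_1(T-t), & t \in [T-t_0, T], \end{cases}
\end{equation*}
so that $v(0) = 0$, $v(T) = u_1(0) = y$, and $v\in W^{1,2}([0,T];H^{2-\alpha},H^{-\alpha})$.

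To evaluate $I_T(v)$, I split the integral. On $[T-t_0,T]$, the equation for $u_1$ yields $\mathcal{H}(v)(t) = -2\partial_s u_1(T-t)$, whose squared norm in $L^2([T-t_0,T];H^{-\alpha})$ is bounded by $4\|\partial_s u_1\|_{L^2([0,T];H^{-\alpha})}^2 \leq C(T)\|y\|_{H^{1-\alpha}}^2$. On $[0,T-t_0]$, expand
\begin{equation*}
\mathcal{H}(v)(t)=\frac{u_1(t_0)}{T-t_0}+\frac{t}{T-t_0}\,Au_1(t_0)-\frac{t^2}{2(T-t_0)^2}\,D(u_1^2(t_0));
\end{equation*}
each of the three contributions to $\tfrac12\|A^{-\alpha/2}\mathcal{H}(v)\|_{L^2([0,T-t_0];H)}^2$ is of order $\|y\|_{H^{1-\alpha}}^2$, using $\|u_1(t_0)\|_{H^{-\alpha}}\leq C\|y\|_{H^{1-\alpha}}$, the selection of $t_0$, and the nonlinearity bound $\|D(u_1^2(t_0))\|_{H^{-\alpha}}^2\leq C\|u_1(t_0)\|_{H^{1-\alpha}}^3\|u_1(t_0)\|_{H^{2-\alpha}}$ coming from (\ref{ineq:nonlinear term}). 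Summing the two pieces gives $I_T(v)\leq C(T)\|y\|_{H^{1-\alpha}}^2$, and choosing $\eta = \min\bigl(\eta_0,\sqrt{\varepsilon/C(T)}\bigr)$ finishes the proof. The main delicate point is the mean-value selection of $t_0$, without which $\|u_1(t_0)\|_{H^{2-\alpha}}$ would only be finite (rather than small), the interpolation piece would contribute a fixed positive quantity, and $I_T(v)$ could not be made arbitrarily small.
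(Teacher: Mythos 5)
Your proposal reproduces the paper's own argument: solve the deterministic Burgers flow from $y$, use the a priori estimates of Proposition \ref{skt estimate} with $f\equiv 0$ (with the Gronwall exponential controlled by taking $\|y\|_{H^{1-\alpha}}$ small), select $t_0 \in (0,T/2)$ by a mean-value argument so that $\|u_1(t_0)\|_{H^{2-\alpha}}$ is small, concatenate a linear ramp with the time-reversed trajectory, and bound the two pieces of the action separately. The only differences are cosmetic (you package the estimate as $I_T(v)\leq C(T)\|y\|_{H^{1-\alpha}}^2$ rather than tracking explicit powers of $\eta$, and a mild imprecision in calling the cubic/quartic nonlinear contribution "of order $\|y\|_{H^{1-\alpha}}^2$", which is harmless since it is in fact even smaller for $\|y\|_{H^{1-\alpha}}\leq 1$), so this is essentially the same proof.
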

\begin{proof}
	We first set 
	\begin{equation*}
		\begin{cases}
			\partial _t v_1 = \Delta  v_1(t) + \frac{1}{2}D(v_1^2(t)), \\
			v_1(0) = y  \in H^{1-\alpha}.
		\end{cases}
		\end{equation*}
		By (\ref{u Gronwall 1}), we get that for any $t \geq 0$,
		$$\int_{0}^{t} \| v_1(s)\Vert _{H^1}^2 \,ds \leq \| v_1(0)\Vert _H^2 \leq \eta ^2$$ and $$\| v_1(t)\Vert _H^2 \leq \| v_1(0)\Vert _H^2 \leq \eta ^2.$$ So by (\ref{u H1}), it follows that
		$$\| v_1(t)\Vert _{H^{1-\alpha}}^2 \leq \| y\Vert _{H^{1-\alpha}}^2\exp\left(C\sup_{0 \leq s \leq t}\| v_1(s)\Vert _H^2\int_{0}^{t} \| v_1(s)\Vert _{H^1}^2 \,ds \right) \leq \eta ^2\exp(C\eta ^4)$$ and $$\sup_{0 \leq r \leq t}\| v_1(r)\Vert _{H^{1-\alpha}}^2 \int_{0}^{t} \| v_1(s)\Vert _{H^1}^2 \| v_1(s)\Vert _H^2 \,ds  \leq \eta ^6\exp(C\eta ^4).$$ Then, by (\ref{u_t L2}), we have that
  \begin{align*}
      \int_{0}^{t} \| \partial_sv_1(s)\Vert _{H^{-\alpha}}^2 \,ds &\leq \| v_1(0)\Vert _{H^{1-\alpha}}^2 + C \sup_{0 \leq r \leq t}\| v_1(r)\Vert _{H^{1-\alpha}}^2 \int_{0}^{t} \| v_1(s)\Vert _{H^1}^2\| v_1(s)\Vert _H^2 \,ds \\ &\leq \eta ^2 + C\eta ^6\exp(C\eta ^4).
    \end{align*}
	 Thanks to the above inequalities and (\ref{u H2 }) we get
  \begin{align*}
      \int_{0}^{t} \| v_1(s)\Vert _{H^{2-\alpha}}^2 \,ds &\leq C\left(\int_{0}^{t} \| v_1^\prime(s)\Vert _{H^{-\alpha}}^2 \,ds + \sup_{0 \leq r \leq t}\| v_1(r)\Vert _{H^{1-\alpha}}^2 \int_{0}^{t} \| v_1(s)\Vert _{H^1}^2\| v_1(s)\Vert _H^2 \,ds \right)\\ &\leq C\eta ^2 + C\eta ^6\exp(C\eta ^4).
      \end{align*}
		Now we fix $T>0$ and choose $t = T/2$. Then, there exists $t_0 \in (0, t)$ such that $$\| v_1(t_0)\Vert _{H^{2-\alpha}}^2 \leq \frac{C\eta ^2 + C\eta ^6\exp(C\eta ^4)}{t} = \frac{2\left(C\eta ^2 + C\eta ^6\exp(C\eta ^4)\right)}{T}.$$

		Next, we set $v_2(t) := \frac{t}{T-t_0}v_1(t_0)$, $t \in [0,T-t_0]$. We have $v_2(0) = 0$, $v_2(T-t_0) = v_1(t_0)$ and 
		$$\mathcal{H} (v_2(t))= \frac{1}{T-t_0}v_1(t_0)-\frac{t}{T-t_0}D^2v_1(t_0) - \frac{t^2}{(T-t_0)^2}D\left(v_1(t_0)\right)v_1(t_0).$$ Similar to the computations in the proof of Proposition \ref{U phi H1}, we have
		\begin{align*}
			\frac{1}{2}\int_{0}^{T-t_0} \left\| A^{-\frac{\alpha}{2}} \mathcal{H} \left(v_2(t)\right)\right\Vert _H^2 \,dt &\leq \frac{3}{2}\int_{0}^{T-t_0} \Bigg[\frac{1}{(T-t_0)^2} \| v_1(t_0)\Vert _{H^{-\alpha}}^2 + \frac{t^2}{(T-t_0)^2}\| v_1(t_0)\Vert _{H^{2-\alpha}}^2 \\ &\ \ \ \ + \frac{t^4}{(T-t_0)^4}\| Du_1(t_0) u_1(t_0)\Vert_{H^{-\alpha}}^2\Bigg]\,dt \\ &\leq \frac{3}{2(T-t_0)}\| v_1(t_0)\Vert _H^2 + \frac{T-t_0}{2}\| v_1(t_0)\Vert _{H^{2-\alpha}}^2 \\ &\ \ \ \ + \frac{3(T-t_0)}{10}\| v_1(t_0)\Vert _{H^{1-\alpha}}^2 \| v_1(t_0)\Vert _{H^{2-\alpha}}^2 \\  &\leq C(1+1/T)\eta ^2 + C(\eta ^6 + \eta ^8 + \eta^{12})\exp(C\eta ^4),
		\end{align*}
		where $C >0$ is a constant. The last inequality is derived from $t_0 < T/2$.

		Finally, we define $$v(t):=\begin{cases}
			v_2(t), &t \in [0,T-t_0], \\ 
			v_1(T-t), &t \in [T-t_0,T].
		\end{cases}$$
		We can check that $v(0)=0$ and $v(T)= y$. Similar to the computations in Proposition \ref{U phi H1}, we get 
		\begin{align*}
			\frac{1}{2}\int_{T-t_0}^{T} \left\| A^{-\frac{\alpha}{2}}\mathcal{H} \left(v_1(T-t)\right)\right\Vert _H^2 \,dt &= \frac{1}{2}\int_{0}^{t_0} \left\| -v^\prime _1 (t) - \Delta v_1(t) - \frac{1}{2}D\left(v_1^2(t)\right) \right\Vert _{H^{-\alpha}}^2 \,dt \\ &= \frac{1}{2}\int_{0}^{t_0} \left\| -2v^\prime _1 (t) \right\Vert _{H^{-\alpha}}^2 \,dt\\ &\leq  2\eta ^2 + 2C\eta ^6\exp(C\eta ^4).
		\end{align*}
		Finally we have \begin{align*}
				I_T(v) &= \frac{1}{2}\int_{0}^{T-t_0} \left\| A^{-\frac{\alpha}{2}}  \mathcal{H} \left(v_2(t)\right)\right\Vert _H^2 \,dt + \frac{1}{2}\int_{T-t_0}^{T} \left\| A^{-\frac{\alpha}{2}}  \mathcal{H} \left(v_1(T-t)\right)\right\Vert _H^2 \,dt\\ & \leq C(1+1/T)\eta ^2 + C(\eta ^6 + \eta ^8 + \eta^{12})\exp(C\eta ^4)
		\end{align*}
		for some constant positive constant $C$.
		Therefore, for every $T,\varepsilon >0$, we can choose $\eta $ sufficiently small such that 
		$$I_T(v) \leq  C(1+1/T)\eta ^2 + C(\eta ^6 + \eta ^8 + \eta^{12})\exp(C\eta ^4) <\varepsilon .$$
\end{proof}

\begin{lemma}\label{lemma 3}
	Assume that $u \in \chi $ and $0 \leq \alpha < 1/2$. Then for each $\varepsilon >0$, we can find $T_\varepsilon >0$ and $v_\varepsilon  \in C([-T_\varepsilon,0];H^{-\alpha})$ such that $v_\varepsilon (-T_\varepsilon ) =0$, $v_\varepsilon (0) =u(0)$ and 
	$$I_{-T_\varepsilon } (v_\varepsilon )\leq I_{-\infty} (u) + \varepsilon .$$
\end{lemma}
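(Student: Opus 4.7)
The proof strategy is to concatenate a short connecting piece (provided by Lemma \ref{lemma 2}) with a long tail of $u$ itself. Concretely, for a large enough $T_0>0$, the value $u(-T_0)$ is very small in $H^{1-\alpha}$ by Proposition \ref{u infty estimate}, so Lemma \ref{lemma 2} lets us join $0$ to $u(-T_0)$ in a fixed time with arbitrarily small action. Then we simply run $u$ from time $-T_0$ to time $0$.

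If $I_{-\infty}(u)=+\infty$, the inequality is trivial: any $v_{\varepsilon}\in C([-T_{\varepsilon},0];H^{-\alpha})$ with $v_{\varepsilon}(-T_{\varepsilon})=0$ and $v_{\varepsilon}(0)=u(0)$ will do (e.g.\ the linear interpolation $v_{\varepsilon}(t)=(1+t/T_{\varepsilon})u(0)$, which is continuous into $H^{-\alpha}$ because $u(0)\in H^{-\alpha}$). So assume $I_{-\infty}(u)<\infty$. By Proposition \ref{u infty estimate} there exists a sequence $t_k\to-\infty$ with $\|u(t_k)\|_{H^{1-\alpha}}\le\|u(t_k)\|_{H^1}\to 0$. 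Fix $\varepsilon>0$, set $T_1:=1$, and let $\eta>0$ be the constant given by Lemma \ref{lemma 2} associated with $T_1$ and $\varepsilon/2$. Choose $k$ so large that $\|u(t_k)\|_{H^{1-\alpha}}\le\eta$ and put $T_0:=-t_k$. Lemma \ref{lemma 2} then yields $v_1\in W^{1,2}([0,T_1];H^{2-\alpha},H^{-\alpha})$ with $v_1(0)=0$, $v_1(T_1)=u(-T_0)$ and $I_{T_1}(v_1)<\varepsilon/2$.

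Set $T_{\varepsilon}:=T_0+T_1$ and define
\begin{equation*}
v_{\varepsilon}(t):=\begin{cases} v_1(t+T_{\varepsilon}), & t\in[-T_{\varepsilon},-T_0],\\ u(t), & t\in[-T_0,0]. \end{cases}
\end{equation*}
Since $v_1(T_1)=u(-T_0)$, the two pieces agree at $t=-T_0$; and since both $v_1$ (via the embedding $W^{1,2}([0,T_1];H^{2-\alpha},H^{-\alpha})\hookrightarrow C([0,T_1];H^{-\alpha})$) and $u|_{[-T_0,0]}$ are $H^{-\alpha}$-continuous, the concatenation $v_{\varepsilon}$ belongs to $C([-T_{\varepsilon},0];H^{-\alpha})$, with the prescribed boundary values $v_{\varepsilon}(-T_{\varepsilon})=0$ and $v_{\varepsilon}(0)=u(0)$.

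Finally, the action is additive under the concatenation because $A^{-\alpha/2}\mathcal{H}(v_{\varepsilon})$ is the concatenation of two $L^2([\cdot,\cdot];H)$ functions (one from $v_1$, one from $u$), so
\begin{equation*}
I_{-T_{\varepsilon}}(v_{\varepsilon})=I_{T_1}(v_1)+\tfrac{1}{2}\int_{-T_0}^{0}\|A^{-\alpha/2}\mathcal{H}(u)(t)\|_H^{2}\,dt\le \tfrac{\varepsilon}{2}+I_{-\infty}(u)<I_{-\infty}(u)+\varepsilon,
\end{equation*}
which is the desired estimate. The only mildly delicate point is the matching-and-continuity of the pasted profile at the junction time $-T_0$; beyond that, Proposition \ref{u infty estimate} (to locate a small $\|u(t_k)\|_{H^{1-\alpha}}$) and Lemma \ref{lemma 2} (to bridge $0$ to $u(-T_0)$ cheaply) do all the work.
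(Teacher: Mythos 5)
Your proof is correct and follows essentially the same strategy as the paper's: use Proposition \ref{u infty estimate} to locate a time $-T_0$ at which $\|u(-T_0)\|_{H^{1-\alpha}}$ is small, apply Lemma \ref{lemma 2} with $T=1$ to bridge from $0$ to $u(-T_0)$ with small action, concatenate with $u|_{[-T_0,0]}$, and use additivity of the action. The only cosmetic difference is your use of $\varepsilon/2$ in Lemma \ref{lemma 2} (the paper uses $\varepsilon$, which suffices since the tail contribution $I_{-T_\varepsilon+1}(u) \le I_{-\infty}(u)$), and you spell out the junction-continuity step that the paper leaves implicit.
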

\begin{proof}
	For $u \in \chi $, we denote $u(0)= \phi $. If $I_{-\infty}(u) = +\infty$, the result is obvious. We assume in the following that $I_{-\infty}(u)< \infty$. Then by Proposition \ref{u infty estimate}, we get $\phi \in H^{1-\alpha}$, $u \in W^{1,2}((-\infty ,0];H^{2-\alpha};H^{-\alpha})$. Moreover there exists a sequence $t_k$ such that $t_k \rightarrow -\infty $ as $k \rightarrow \infty $ and $\lim_{k \rightarrow \infty }\| u(t_k)\Vert _{H^{1-\alpha}} =0$. Then, fix $\varepsilon >0$ and choose $T_\varepsilon -1  = - t_k >0$ for $k$ sufficiently large such that $$\| u(-T_\varepsilon+ 1 )\Vert _{H^{1-\alpha}} =\| u(t_k)\Vert _{H^{1-\alpha}} <\eta,$$ where we choose $\eta >0$ as in Lemma \ref{lemma 2}, for $T=1$ and the $\varepsilon $ here. Then by Lemma \ref{lemma 2}, we can find $v \in W^{1,2}([-T_\varepsilon  ,-T_\varepsilon+1] ;H^{2-\alpha};H^{-\alpha})$ such that 
	$$I_{-T_\varepsilon ,-T_\varepsilon +1}(v) = I_{-T_\varepsilon}(v) - I_{-T_\varepsilon +1}(v)<\varepsilon ,\ v(-T_\varepsilon )=0 \ \text{and}\ v(-T_\varepsilon +1)= u(-T_\varepsilon+1 ) .$$

	Finally, we define 
	$$v_\varepsilon (t):=\begin{cases}
		v(t), &t \in [-T_\varepsilon ,-T_\varepsilon +1] ,\\ 
		u(t), &t \in [-T_\varepsilon +1,0].
	\end{cases}$$
	Notice that $v_\varepsilon (-T_\varepsilon )=0$, $v_\varepsilon (0) = u(0)$ and $v_\varepsilon \in W^{1,2}([-T_\varepsilon  ,0] ;H^{2-\alpha};H^{-\alpha})$. Moreover, we have \begin{align*}
		I_{-T_\varepsilon }(v_\varepsilon ) &= I_{-T_\varepsilon ,-T_\varepsilon +1}(v) + I_{-T_\varepsilon+1 }(u)\\ &< \varepsilon  + \left(I_{-\infty}(u)-I_{-\infty, -T_\varepsilon +1}(u)\right) \\ &< \varepsilon  + I_{-\infty}(u).
	\end{align*}
\end{proof}
Now we can prove the following essential characterization of the function $U^\alpha$.
	
\begin{theorem}\label{characterization of U}
	Suppose $0 \leq \alpha < 1/2$, then for any $\phi  \in H^{1-\alpha}$, we have 
	$$U^\alpha(\phi ) = \min \left\{I_{-\infty}(u): u \in \chi ^\phi \right\}.$$
\end{theorem}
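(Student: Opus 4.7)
The plan is to prove two inequalities: first that $U^\alpha(\phi)\le I_{-\infty}(u)$ for every $u\in\chi^\phi$, and second that the infimum on the right is attained at some $u^*\in\chi^\phi$ satisfying $I_{-\infty}(u^*)\le U^\alpha(\phi)$. Together these force $U^\alpha(\phi)=I_{-\infty}(u^*)=\min_{u\in\chi^\phi}I_{-\infty}(u)$.

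For the first inequality, fix any $u\in\chi^\phi$. Lemma \ref{lemma 3} produces, for each $\varepsilon>0$, a time $T_\varepsilon>0$ and a path $v_\varepsilon\in C([-T_\varepsilon,0];H^{-\alpha})$ with $v_\varepsilon(-T_\varepsilon)=0$, $v_\varepsilon(0)=u(0)=\phi$, and $I_{-T_\varepsilon}(v_\varepsilon)\le I_{-\infty}(u)+\varepsilon$. Feeding this candidate into the time-reversed formulation (\ref{quasi potential 2}) of the quasi-potential gives $U^\alpha(\phi)\le I_{-T_\varepsilon}(v_\varepsilon)\le I_{-\infty}(u)+\varepsilon$, and letting $\varepsilon\to 0$ delivers the claim.

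For the second step, I start from Proposition \ref{U phi H1}, which guarantees $U^\alpha(\phi)<\infty$ since $\phi\in H^{1-\alpha}$, and pick a minimising sequence $T_n>0$, $u_n\in C([-T_n,0];H^{-\alpha})$ with $u_n(-T_n)=0$, $u_n(0)=\phi$, and $I_{-T_n}(u_n)\downarrow U^\alpha(\phi)$. I extend each $u_n$ to the whole half-line by setting $\tilde u_n(t)=u_n(t)$ on $[-T_n,0]$ and $\tilde u_n(t)\equiv 0$ on $(-\infty,-T_n]$. Because $u_n(-T_n)=0$, the extension is continuous as a map into $H^{-\alpha}$ and $\|\tilde u_n(t)\|_H\to 0$ as $t\to-\infty$ trivially, so $\tilde u_n\in\chi^\phi$. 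On the trivial interval $\mathcal H(\tilde u_n)=0$, and the continuity at $-T_n$ prevents any singular contribution to the distributional time derivative, hence $I_{-\infty}(\tilde u_n)=I_{-T_n}(u_n)\to U^\alpha(\phi)$.

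The sequence $\{\tilde u_n\}$ therefore sits in a sublevel set of $I_{-\infty}$, which by Theorem \ref{level infty} is compact in $\chi$. Extracting a subsequence $\tilde u_{n_k}\to u^*$ in $\chi$ and noting that convergence in $\chi$ is uniform on $[-1,0]$ in $H^{-\alpha}$, the condition $\tilde u_{n_k}(0)=\phi$ passes to the limit, giving $u^*\in\chi^\phi$. Proposition \ref{p:lsc} (lower semi-continuity) then yields $I_{-\infty}(u^*)\le\liminf_k I_{-\infty}(\tilde u_{n_k})=U^\alpha(\phi)$, and combining with the first inequality applied to $u^*$ closes the loop. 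The only mildly technical check is the action identity $I_{-\infty}(\tilde u_n)=I_{-T_n}(u_n)$ for the zero-extended paths; all remaining work is a direct appeal to the compactness and lower-semi-continuity results already established for $I_{-\infty}$.
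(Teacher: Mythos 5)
Your proposal is correct and follows essentially the same route as the paper: both directions rest on Lemma \ref{lemma 3} (from $\chi^\phi$ to a finite-time competitor) and the zero-extension trick (from a finite-time competitor to $\chi^\phi$), and the existence of a minimiser rests on Theorem \ref{level infty} plus Proposition \ref{p:lsc}. The only organizational difference is that the paper proves the two-sided infimum identity first and then appeals to compactness of the level sets to upgrade inf to min in one line, whereas you fold those two steps together by extracting a limit from a zero-extended minimising sequence; you thereby supply the detail (uniform-on-compacts convergence preserving $u^*(0)=\phi$, lower semi-continuity) that the paper elides.
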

\begin{proof}
	We use the formulation (\ref{quasi potential 2}) of $U^\alpha(\phi )$. Let $T>0$ be fixed and $u \in C([-T,0];H^{-\alpha})$ such that $u(-T)=0$, $u(0)=\phi $ and $I_{-T}(u) < \infty$. 

	Set 
	$$\bar{u}  (t):=\begin{cases}
		u(t), &t \in [-T ,0], \\ 
		0, &t \in (-\infty,-T].
	\end{cases}$$
	Obviously, $\bar{u} \in \chi ^\phi $ and $I_{-\infty }(\bar{u} ) = I_{-T}(u)$. In particular, this implies that
	$$\inf\left\{I_{-\infty}(v): v \in \chi ^\phi \right\} \leq I_{-T}(u).$$
	Taking the infimum over all $u \in C([-T,0];H^{-\alpha})$ and $T>0$, we get
	$$\inf\left\{I_{-\infty}(u): u \in \chi ^\phi \right\} \leq U^\alpha(\phi ).$$

	By Lemma \ref{lemma 3}, for any $u \in \chi ^\phi $ and $\varepsilon >0$ there exists $v_\varepsilon $ such that $v_\varepsilon (-T_\epsilon )=0$, $v_\varepsilon (0)=\phi $ and 
	$$I_{-T_\varepsilon } (v_\varepsilon )\leq I_{-\infty} (u) + \varepsilon .$$ Notice that for any $\varepsilon >0$
	$$ \inf\left\{I_{-T}(v):T>0, v\in C([-T,0];H^{-\alpha}), v(-T)=0, v(0)=\phi \right\} \leq I_{-T_\varepsilon } (v_\varepsilon ).$$ This implies $U^\alpha(\phi ) \leq I_{-\infty}(u) + \varepsilon$. Thus, by taking the infimum over $\varepsilon >0$ and $u \in \chi ^\phi $, we get $$U^\alpha(\phi) \leq  \inf\left\{I_{-\infty}(u): u \in \chi ^\phi \right\}.$$ Therefore, $U^\alpha(\phi) =  \inf\left\{I_{-\infty}(u): u \in \chi ^\phi \right\}$.

	Finally, by Proposition \ref{level infty}, the level sets of $I_{-\infty}$ are compact. Hence, the infimum is in fact a minimum.
	$$U^\alpha(\phi) =  \min\left\{I_{-\infty}(u): u \in \chi ^\phi \right\}.$$
\end{proof}

We conclude this section by proving that $U$ has compact level sets.
\begin{theorem}\label{U compactness}
	Suppose $0 \leq \alpha < 1/2$, then for any $r>0$, the set $\Phi^\alpha (r) := \left\{\phi \in H : U^\alpha(\phi )\leq r\right\}$ is compact in H. In particular, $U^\alpha$ is lower semi-continuous in $H$.
\end{theorem}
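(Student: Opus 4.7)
The plan is to use the minimizer characterization in Theorem \ref{characterization of U} to lift a sequence in the sublevel set $\Phi^\alpha(r)$ to a sequence in a sublevel set of $I_{-\infty}$ inside $\chi$, apply the compactness provided by Theorem \ref{level infty}, and then upgrade the resulting $H^{-\alpha}$ convergence at time $0$ to convergence in $H$ via a compact Sobolev embedding.

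In detail: take an arbitrary sequence $\{\phi_n\} \subset \Phi^\alpha(r)$. By Proposition \ref{U phi H1}, each $\phi_n \in H^{1-\alpha}$, so Theorem \ref{characterization of U} supplies $u_n \in \chi^{\phi_n}$ with $I_{-\infty}(u_n) = U^\alpha(\phi_n) \leq r$. Hence $\{u_n\}$ lies in the level set $\{u \in \chi : I_{-\infty}(u) \leq r\}$, which is compact in $\chi$ by Theorem \ref{level infty}. Extract a subsequence (still denoted $u_n$) with $u_n \to u$ in $\chi$ and $I_{-\infty}(u) \leq r$ (using Proposition \ref{p:lsc}). Evaluating at $t = 0$ gives $\phi_n = u_n(0) \to u(0)$ in $H^{-\alpha}$.

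Next I would upgrade this convergence to $H$. The estimate in Proposition \ref{u infty estimate}, inspected carefully, depends on $u_n$ only through $I_{-\infty}(u_n)$ and through the rate at which $\|u_n(t)\|_H \to 0$ as $t \to -\infty$; once $-S$ is chosen so that $\int_{-\infty}^{-S}\|A^{-\alpha/2}\mathcal{H}(u_n)\|_H^2\,dt$ is small (uniformly in $n$, since $I_{-\infty}(u_n) \leq r$) and $T$ is chosen later, the bound becomes uniform. Thus $\sup_n \|\phi_n\|_{H^{1-\alpha}} \leq M(r) < \infty$. Since $0 \leq \alpha < 1/2$, the embedding $H^{1-\alpha} \hookrightarrow H$ is compact, so after passing to a further subsequence $\phi_n \to \psi$ in $H$ for some $\psi \in H$. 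By uniqueness of the $H^{-\alpha}$ limit, $\psi = u(0)$. Since $u \in \chi^{u(0)}$ and $I_{-\infty}(u) \leq r$, the characterization in Theorem \ref{characterization of U} yields $U^\alpha(u(0)) \leq I_{-\infty}(u) \leq r$, so $u(0) \in \Phi^\alpha(r)$. This proves sequential compactness of $\Phi^\alpha(r)$ in $H$. Lower semi-continuity of $U^\alpha$ on $H$ is then immediate from the closedness of all sublevel sets in $H$.

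The main obstacle I anticipate is verifying that the bound $\sup_{t \leq 0}\|u_n(t)\|_{H^{1-\alpha}} \leq M(r)$ is genuinely uniform over the sublevel set, rather than pointwise in $u_n$. This is not entirely transparent from the statement of Proposition \ref{u infty estimate}, and one must revisit its proof: the constant $\bar C$ depends on $\int_{-\infty}^{-S}\|f\|_H^2\,dt + \|u(-T)\|_H^2$, and since $\|u_n(-T)\|_H \to 0$ as $T \to \infty$ for each fixed $n$, a diagonal argument (or a direct rerun of the Gronwall step with $S = 0$ and $T \to \infty$) gives the uniform bound as soon as $I_{-\infty}(u_n) \leq r$. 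All other steps (passage to the limit in $\chi$, application of compact embedding, lower semi-continuity) are standard given the results already established.
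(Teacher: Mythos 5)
Your proposal is correct and follows essentially the same route as the paper: lift $\phi_n$ to minimizers $u_n \in \chi^{\phi_n}$ via Theorem \ref{characterization of U}, apply the compactness of level sets of $I_{-\infty}$ from Theorem \ref{level infty} to get $u_n \to u$ in $\chi$, invoke the uniform $H^{1-\alpha}$ bound from Proposition \ref{u infty estimate}, use the compact embedding $H^{1-\alpha} \hookrightarrow H$ to upgrade convergence from $H^{-\alpha}$ to $H$, and close the argument with $U^\alpha(u(0)) \leq I_{-\infty}(u) \leq r$. The uniformity concern you flag is real but resolves exactly as you describe (the paper notes in the proof of Proposition \ref{p:lsc} that the $H^{1-\alpha}$ bound in Proposition \ref{u infty estimate} depends only on $\int_{-\infty}^0 \|f\|_H^2\,dt$, hence is uniform over $\{I_{-\infty}\leq r\}$); the paper phrases the final passage through a weak-compactness step in $H^{1-\alpha}$ followed by compact embedding, but this is the same idea as your direct use of precompactness.
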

\begin{proof}
	Let $\left\{\phi _n\right\}$ be a sequence in $\Phi^\alpha (r)$. By Theorem \ref{characterization of U}, for any $n \in \mathbb{N} $ there exists $u_n \in \chi ^{\phi _n}$ such that $$I_{-\infty}(u_n) = U^\alpha(\phi _n)\leq r.$$ Hence, 
	$$\left\{u_n\right\} \subset \left\{I_{-\infty}\leq r\right\}.$$ By the compactness of the level sets of $I_{-\infty}$ proved in Theorem \ref{level infty}, we can find a subsequence $\left\{u_{n_k}\right\} \subset \left\{u_n\right\}$ and $\bar{u} \in C((-\infty,0];H^{-\alpha})$ such that 
	$$\lim_{k \rightarrow \infty} u_{n_k}=\bar{u} \ \text{in} \ C((-\infty,0];H^{-\alpha}).$$ 
 In particular, $\lim_{k \rightarrow \infty} \phi _{n_k} = \lim_{k \rightarrow \infty}u_{n_k}(0)=\bar{u}(0)$ in $H^{-\alpha}$. Moreover, by Proposition \ref{u infty estimate}, there exists a positive constant $C$ such that $\|u_{n_k}(0)\Vert_{H^{1-\alpha}} \leq C$ for  all $k \in \mathbb{N}$. Thus, we can find a subsequence $\left\{u_{n^0_k}(0)\right\} \subset \left\{u_{n_k}(0)\right\}$ such that $u_{n_k^0}(0)$ is convergent to $u(0)$ weakly in $H^{1-\alpha}$. By the uniqueness of limit, we have $u(0) = \bar{u}(0)$. Since $H^{1-\alpha}$ is compact embedded into $H$, $\phi_{n_k^0} = u_{n_k^0}(0)$ is convergent to $\bar{u}(0)$ in $H$. Thanks to the lower semi-continuity of $I_{-\infty}$ proved in Proposition \ref{p:lsc}, we have
	$$I_{-\infty}(\bar{u} ) \leq \liminf_{k \rightarrow \infty} I_{-\infty}(u_{n_k}) \leq r.$$ Moreover, by Theorem \ref{characterization of U}, $U^\alpha\left(\bar{u} (0)\right) \leq I_{-\infty}(\bar{u} )$. In particular, $U^\alpha\left(\bar{u} (0)\right) \leq r$. Hence, $\bar{u} (0) \in \Phi^\alpha (r)$, the compactness of $\Phi^\alpha (r)$ in $H$ follows.
\end{proof}

\section{LDP for the invariant measures}\label{sec:LDP for invariant measure}
 In this section, we send $\epsilon $ and $\delta (\epsilon )$ to $0$ simultaneously and consider the large deviations principle for the invariant measures of Eq. (\ref{sbe2}). The main result of this section is the following theorem. 
 \begin{condition}\label{assumption 2}
     Assume that $Q_\epsilon $ has the form given in (\ref{Q epsilon }) for some $1/2 < \beta -\alpha < 1$, $0 \leq \alpha < 1/2$ and 
	$$\lim_{\epsilon  \rightarrow 0}\delta (\epsilon ) = 0, \  \lim_{\epsilon  \rightarrow 0}\epsilon \delta (\epsilon )^{-\frac{1+2\alpha}{2\beta} }= 0.$$  \end{condition}
\begin{theorem}\label{LDP IM}
	Under Assumption \ref{assumption 2}, the family of invariant measures $\left\{\nu _\epsilon \right\}_{\epsilon >0}$ of Eq. (\ref{sbe2}) satisfies a large deviations principle in $H$ with a good rate function. The rate function is the quasi-potential given in (\ref{quasi potential 1}).

\end{theorem}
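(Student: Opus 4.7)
The proof follows the classical scheme for LDP of invariant measures of Sowers, Brzeźniak-Cerrai and Cerrai-Paskal: reduce to the path-level ULDP via the invariance identity $\nu_\epsilon(B) = \int_H P_T(x,B)\,\nu_\epsilon(dx)$ and combine it with an exponential-tightness estimate that confines $\nu_\epsilon$ to a compact subset of $H$.

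For the lower bound, fix $\phi \in H$; if $U^\alpha(\phi)=\infty$ there is nothing to prove, so by Proposition \ref{U phi H1} assume $\phi \in H^{1-\alpha}$. Given $\gamma,\delta>0$, apply Theorem \ref{characterization of U} to pick $T>0$ and $v \in C([0,T];H^{-\alpha})$ with $v(0)=0$, $v(T)=\phi$ and $I_T(v)<U^\alpha(\phi)+\gamma/3$. By Proposition \ref{proposition x-y} the skeleton solution $v^x$ starting from $x$ with the same forcing satisfies $\|v^x(T)-\phi\|_H < \delta/2$ and $I^x(v^x) < U^\alpha(\phi)+\gamma/2$ whenever $\|x\|_H<r_0$ for some sufficiently small $r_0>0$. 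The FWULDP lower bound of Theorem \ref{LDPu}, applied uniformly over $B_H(0,r_0)$, yields $\mathbb{P}(u_\epsilon^x \in B_{L^\infty([0,T];H)}(v^x,\delta/2)) \geq \exp(-(U^\alpha(\phi)+\gamma)/\epsilon)$. Combining with invariance and the fact that $\nu_\epsilon \rightharpoonup \delta_0$ gives $\nu_\epsilon(B_H(0,r_0))\geq 1/2$ for small $\epsilon$, so the lower bound follows.

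For the upper bound, the crucial input is the exponential tightness on a compact subset of $H$: for every $M>0$ one needs $K_M \subset H$ compact such that $\limsup_{\epsilon\to 0}\epsilon\log\nu_\epsilon(K_M^c)\leq -M$. Following the introduction I take $K_M = \overline{B}_{D(A^{2\sigma})}(R_M)$ for some small $\sigma>0$, which is compact in $H$ by the Rellich-Kondrachov theorem. This is the principal obstacle and the main novelty: the Cerrai-Paskal argument for 2D Navier-Stokes on the torus rests on the cancellation $\langle B(u),Au\rangle_H=0$, which fails for the Burgers nonlinearity. My plan is to apply Itô's formula to $\|A^\sigma u_\epsilon\|_H^2$ (admissible because $Q_\epsilon$ is trace-class for each fixed $\epsilon>0$), control the nonlinear term $\langle D(u_\epsilon^2),A^{2\sigma}u_\epsilon\rangle_H$ via interpolation and the one-dimensional Sobolev embedding $H^{1/2+\eta}\hookrightarrow L^\infty$, absorb the bad contributions into the dissipation $-\|A^{\sigma+1/2}u_\epsilon\|_H^2$, and then use invariance together with the trace estimate for $A^{2\sigma}Q_\epsilon$ and the hypothesis $\epsilon\delta(\epsilon)^{-(1+2\alpha)/2\beta}\to 0$ (choosing $\sigma$ small enough that $\epsilon\delta(\epsilon)^{-(1+2\alpha+4\sigma)/2\beta}\to 0$ still holds) to extract exponential integrability of $\|A^\sigma u_\epsilon\|_H^2$ under $\nu_\epsilon$; Chebyshev then delivers the required tail bound.

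Granted this, fix $r\geq 0$, $\gamma,\delta>0$, set $F:=B_H^c(\Phi^\alpha(r),\delta)$, choose $M=r+1$ with the associated $K_M$. By invariance and for any $T>0$,
$$\nu_\epsilon(F) \leq \sup_{x\in K_M} P_T(x,F) + \nu_\epsilon(K_M^c),$$
and the second term is bounded by $\exp(-(r+1/2)/\epsilon)$ for small $\epsilon$. For the first term I exploit the Markov property at an intermediate $T_1\in(0,T)$: since the noise-free Burgers flow contracts in $H$ (because $\tfrac{d}{dt}\|u\|_H^2+2\|u\|_{H^1}^2\leq 0$) and parabolic smoothing controls higher norms, for $T_1$ large the deterministic trajectory from any $x\in K_M$ enters a prescribed small $H^{1-\alpha}$-ball $B$ around $0$, and Corollary \ref{corollary DZ} on the compact set $K_M$ then yields $\sup_{x\in K_M}\mathbb{P}(u_\epsilon^x(T_1)\notin B)\leq \exp(-(r+1)/\epsilon)$ for small $\epsilon$. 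For the remaining factor $\sup_{y\in B}P_{T-T_1}(y,F)$, Corollary \ref{corollary DZ} on $\overline{B}$ reduces matters to showing
$$\inf_{y\in\overline{B}}\,\inf_{\substack{w(0)=y\\ w(T-T_1)\in F}} I_{T-T_1}^y(w) \;\geq\; r-\gamma/2.$$
By Lemma \ref{lemma 2} applied at $\phi=y$, $\sup_{y\in\overline{B}}U^\alpha(y)\leq\gamma/4$ provided $B$ is small enough, while the compactness of the sublevels of $U^\alpha$ (Theorem \ref{U compactness}) combined with its lower semi-continuity gives, by a standard contradiction argument, a constant $\eta_0>0$ such that $d_H(\psi,\Phi^\alpha(r))\geq\delta$ forces $U^\alpha(\psi)\geq r+\eta_0$. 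A concatenation argument based on Theorem \ref{characterization of U} converts any path from $y$ to $\psi$ into a path from $0$ to $\psi$ of cost at most $U^\alpha(y)+I_{T-T_1}^y(w)$, from which the above infimum bound is immediate after shrinking $B$. Assembling the three estimates produces $\nu_\epsilon(F)\leq \exp(-(r-\gamma)/\epsilon)$ for small $\epsilon$, completing the proof.
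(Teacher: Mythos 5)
Your lower bound argument is essentially the paper's and is fine, but the two pieces of the upper bound you propose both break down, and the first break is exactly the difficulty the paper was written to get around. Applying It\^o's formula to $\|A^{\sigma}u_\epsilon\|_H^2$ does not close: the nonlinear term is $\langle D(u^2),A^{2\sigma}u\rangle_H = -\langle u^2, D A^{2\sigma}u\rangle_H$, and the best one can do with the product/embedding theorems in one dimension is
$\left|\langle u^2, D A^{2\sigma}u\rangle_H\right| \le C\|u\|_{H^{1/4+\sigma}}^{2}\,\|u\|_{H^{1+2\sigma}}$; interpolating $H^{1/4+\sigma}$ between $H^{2\sigma}$ and $H^{1+2\sigma}$ and using Young's inequality leaves a remainder of order $\|u\|_{H^{2\sigma}}^{p}$ with $p$ close to $6$ (super-quadratic), which the dissipation $-\|u\|_{H^{1+2\sigma}}^2 \le -c\|u\|_{H^{2\sigma}}^2$ cannot absorb. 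Consequently the drift of $\exp(\|A^{\sigma}u_\epsilon\|_H^2/2\epsilon)$ is unbounded above for large $\|A^{\sigma}u_\epsilon\|_H$, and there is no Gr\"onwall closure. Unlike the 2D torus Navier--Stokes case, the Burgers nonlinearity does \emph{not} satisfy $\langle D(u^2),A^{2\sigma}u\rangle_H=0$. The paper sidesteps this by working on the mild formulation: write $u_\epsilon^0(t+1)=S(1)u_\epsilon^0(t)+\int_t^{t+1}S(t+1-s)D(u_\epsilon^0(s)^2)\,ds + (\text{stochastic convolution})$, let the analytic semigroup supply the $A^{\sigma}$ regularity (the singularity $(t+1-s)^{-(3/4+\sigma)}$ is integrable for $\sigma<1/4$), and control the various pieces with separate exponential estimates for $Z_\epsilon$ in $H^{2\sigma}$ and $L^4([T,T+1];L^4)$ and for $u_\epsilon^0$ in $H$ (Lemmas \ref{exp estimate Z}--\ref{exponential estimate 2}). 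That is the genuinely new step.

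The second gap is in the decomposition $P_T(x,F)\le \mathbb{P}(u_\epsilon^x(T_1)\notin B)+\sup_{y\in B}P_{T-T_1}(y,F)$ at a single intermediate time $T_1$. You cannot make the first term $\le\exp(-(r+1)/\epsilon)$ with a \emph{small} ball $B$: the exit cost from $B_H(0,\eta)$ is on the order of $U^\alpha$ on the boundary, which is at most of size $\eta^2$ (see Lemma \ref{lemma 2}), so the bound $\ge r+1$ forces $\eta\gtrsim\sqrt{r}$, and then $\sup_{y\in B}U^\alpha(y)\le\gamma/4$ is false and the second term cannot be handled. The paper's Sowers-type argument is designed precisely to resolve this tension: it splits on the event that the trajectory visits the small $\lambda$-ball at \emph{some} integer time $j\in\{1,\dots,N\}$. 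The complementary event ``$\|u_\epsilon^y(j)\|_H\ge\lambda$ for all $j$'' forces the rate function to exceed $r$ not because a single exit is expensive but because staying away for $N$ time units accumulates dissipation cost — that is the content of Lemma \ref{upper bound lemma 2}, whose proof is a telescoping of the energy inequality $\|u(i+1)\|_H^2 \le e^{-1}\|u(i)\|_H^2 + \int_i^{i+1}\|f\|_H^2\,ds$. On the visiting event one applies the Markov property and Lemma \ref{upper bound lemma 1}. Your proposal omits both of these lemmas and does not work without them.
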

 In Theorem \ref{U compactness}, we have known that $U$ has compact level sets. Thus, in order to prove the LDP, we only need to prove the lower bound and the upper bound in Definition \ref{LDP}.

 \subsection{Lower bound}
 In this section, we will prove the following lower bound result. The following proof is a standard argument used by many people, e.g. see Theorem 6.1 of \cite{brzezniak2017large} and Proposition 4.1 of \cite{cerrai2022large}. We include it here for completeness.
\begin{proposition}
	Under Assumption \ref{assumption 2}, the family of the invariant measures $\left\{\nu _\epsilon \right\}_{\epsilon >0}$ of Eq. (\ref{sbe2}) satisfies a large deviations principle lower bound in $H$ with the rate function $U$. That is, for any $\phi \in H$ and $\delta , \gamma  >0$, there exists $\epsilon _0>0$ such that 
$$\nu _\epsilon \left(B_H(\phi ,\delta )\right) \geq \exp\left(-\frac{U^\alpha(\phi )+\gamma }{\epsilon }\right), \ \epsilon \leq \epsilon _0.$$
\end{proposition}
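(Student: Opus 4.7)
The plan is to follow the classical template of \cite{brzezniak2017large,cerrai2022large}: combine invariance of $\nu_\epsilon$, tightness of the family $\{\nu_\epsilon\}$, and the FWULDP lower bound from Theorem \ref{LDPu}, while handling the nontrivial issue that the quasi-potential is defined via paths starting from $0$ rather than an arbitrary $x$. If $U^\alpha(\phi) = +\infty$ the inequality is vacuous, so I assume $U^\alpha(\phi) < \infty$, which by Proposition \ref{U phi H1} forces $\phi \in H^{1-\alpha}$. First I would use invariance to rewrite
\[
\nu_\epsilon(B_H(\phi,\delta)) = \int_H \mathbb{P}\bigl(u_\epsilon^x(T) \in B_H(\phi,\delta)\bigr)\, d\nu_\epsilon(x)
\]
for any $T > 0$, and extract from the uniform bound $\sup_\epsilon \int_H \|x\|_{H^1}^2 d\nu_\epsilon(x) < \infty$ established in Section \ref{trace class} a radius $R > 0$ with $\nu_\epsilon(B_H(R)) \geq 1/2$ for every small $\epsilon$, via Chebyshev's inequality.

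The main step is to build, for each $x \in B_H(R)$, a reference trajectory $\tilde u^x \in L^\infty([0,T];H)$ with $\tilde u^x(0) = x$, $\tilde u^x(T) = \phi$ and $I^x(\tilde u^x) \leq U^\alpha(\phi) + \gamma/2$, where $T$ is independent of $x$. I would do this by concatenating three pieces. On $[0, T_1]$, run the deterministic (noise-free) Burgers flow $u_0^x$ starting at $x$: multiplying the equation by $u_0^x$ and invoking the Poincar\'e inequality gives $\|u_0^x(t)\|_H \leq e^{-\pi^2 t}\|x\|_H$, and parabolic smoothing in the spirit of Proposition \ref{skt estimate 2} upgrades this to decay in $H^{1-\alpha}$. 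Hence $T_1 = T_1(R,\eta)$ can be chosen so that $\|u_0^x(T_1)\|_{H^{1-\alpha}} \leq \eta$ uniformly in $x \in B_H(R)$; this piece carries zero action since $\mathcal{H}(u_0^x) = 0$. On $[T_1, T_1+1]$, paste the controlled bridge furnished by Lemma \ref{lemma 2} joining $u_0^x(T_1)$ to $0$, with $\eta$ chosen small enough that its cost is at most $\gamma/4$. On $[T_1+1, T]$ with $T := T_1 + 1 + T_0$, use a near-optimal skeleton trajectory $u^*$ from $0$ to $\phi$ with $I_{T_0}(u^*) \leq U^\alpha(\phi) + \gamma/4$, provided by the definition of $U^\alpha$. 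Adding costs gives $I^x(\tilde u^x) \leq U^\alpha(\phi) + \gamma/2$, uniformly in $x \in B_H(R)$.

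Finally I would apply the FWULDP lower bound of Theorem \ref{LDPu} with $D = B_H(R)$, level $r = U^\alpha(\phi) + \gamma/2$ and reference point $\tilde u^x \in \Phi^x(r)$: there exists $\epsilon_0 > 0$ such that for $\epsilon \leq \epsilon_0$ and all $x \in B_H(R)$,
\[
\mathbb{P}\bigl(\|u_\epsilon^x - \tilde u^x\|_{L^\infty([0,T];H)} < \delta\bigr) \geq \exp\!\left(-\frac{U^\alpha(\phi) + 3\gamma/4}{\epsilon}\right),
\]
and since $\tilde u^x(T) = \phi$ the event on the left entails $u_\epsilon^x(T) \in B_H(\phi,\delta)$. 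Integrating this over $B_H(R)$ against $\nu_\epsilon$ and absorbing the factor $1/2$ into the exponential for $\epsilon$ small delivers the claimed lower bound. The hard part will be the uniform-in-$x$ decay of $u_0^x$ in $H^{1-\alpha}$, so that a single $\eta$ works when invoking Lemma \ref{lemma 2} for the whole ball $B_H(R)$; once that uniform smoothing is in hand, the remaining assembly, including the continuity in the initial datum provided by Proposition \ref{proposition x-y}, is routine.
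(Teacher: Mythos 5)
Your overall template (invariance of $\nu_\epsilon$, tightness of $\{\nu_\epsilon\}$ to get $\nu_\epsilon(B_H(0,R)) \geq 1/2$, then the FWULDP lower bound of Theorem \ref{LDPu}) matches the paper, but your path construction has a genuine gap and is also more complicated than it needs to be.

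The gap is in the middle segment. Lemma \ref{lemma 2} produces a cheap trajectory $v$ with $v(0)=0$ and $v(T)=y$, i.e.\ a bridge \emph{from} $0$ \emph{to} a small $y \in H^{1-\alpha}$. You invoke it to join $u_0^x(T_1)$ to $0$, which is the opposite direction. These are not interchangeable: if $\tilde v(t)=v(T-t)$, then $\mathcal{H}(\tilde v)(t) = -\mathcal{H}(v)(T-t) + 2\Delta v(T-t) - D(v^2)(T-t)$, so the cost of the reversed path picks up the full parabolic term $\int \|v\|_{H^{2-\alpha}}^2$ and the nonlinearity, and is not controlled by $I_T(v)$ alone. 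One can rescue this by going back into the proof of Lemma \ref{lemma 2} (its specific $v$ has $\|v\|_{L^2([0,T];H^{2-\alpha})}=O(\eta)$, so the reversed cost is still $O(\eta^2)$), but that requires re-proving a variant of the lemma rather than citing it. Separately, your construction forces you to establish uniform decay of the noise-free flow in $H^{1-\alpha}$ over $x \in B_H(R)$, which, while achievable via the parabolic smoothing estimates (\ref{u Gronwall 1 b}) and (\ref{u H1}), is strictly harder than what is actually needed.

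The paper sidesteps both issues with a simpler two-piece construction: run the noise-free flow on $[0,T_1]$, then apply the near-optimal control $\bar f$ (the one realizing $U^\alpha(\phi)$ up to $\gamma/2$ from the initial point $0$) starting from $u_0^x(T_1)$ rather than from $0$. The cost is then exactly $\frac12\|\bar f\|_{L^2}^2 \leq U^\alpha(\phi)+\gamma/2$, and Proposition \ref{proposition x-y} (Lipschitz continuity of the skeleton flow in the initial datum) shows that the endpoint differs from $\phi$ by at most $\lambda\exp(C\lambda^2+C\|\bar f\|_{L^2}^2)$, where $\lambda = \sup_{x\in B_H(R)}\|u_0^x(T_1)\|_H$. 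This only requires $H$-decay of the noise-free flow (immediate from (\ref{u Gronwall 1}) and Poincar\'e), and replaces your bridge segment entirely by an approximation argument at the terminal time, tolerated because the FWULDP lower bound is applied to a $\delta/2$-ball around $v^x$. So while your idea of "exactly hitting $\phi$" is natural, the paper's "approximately hitting $\phi$" route is cheaper and avoids both the $H^{1-\alpha}$ smoothing and the direction issue with Lemma \ref{lemma 2}.
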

\begin{proof}
	Fix $\phi  \in H$, $\delta >0$, $\gamma >0$ and $T>0$. We assume that $U^\alpha(\phi ) < \infty$ otherwise the result would be trivial. We set $\left\{v^x\right\}_{x \in H} \subset L^\infty ([0,T];H)$ to be a family of paths satisfying 
	$$\sup_{x \in H}\left\| v^x(T) -\phi \right\Vert _H <\delta /2.$$ Recalling the discussions in Section \ref{trace class}, we have that
	$$\sup_{\epsilon >0}\int_{H} \| x\Vert _H^2 \,d\nu_\epsilon  (x) < \infty . $$ Hence, by Chebyshev's inequality, there exists a constant $R$ being sufficiently large and independent of $\epsilon$ such that $$\nu _\epsilon \left(B_H(0,R)\right) \geq \frac{1}{2}, \ \text{for all}\ \epsilon >0.$$
	Moreover, as we discussed in Section \ref{trace class}, $\left\{\mathcal{L} (u_\epsilon^x)\right\}_{\epsilon > 0}$  satisfies the Freidlin-Wentzell uniform large deviations principle in $L^\infty([0,T];H)$. So, for every $r_0,\gamma >0$ there exists $\epsilon _0>0$ such that $\gamma /2 + \epsilon_0 \ln \frac{1}{2} >0$ and for any $v^x \in L^\infty([0,T];H)$ with $I_T^x(v^x) \leq r_0$,
	$$\inf _{x \in B_H(0,R)}\mathbb{P} \left(\left\| u_\epsilon ^x -v^x\right\Vert _{L^\infty([0,T];H)} < \delta /2 \right) \geq \inf _{x \in B_H(0,R)}\exp\left(-\frac{1}{\epsilon }\left[I_T^x(v^x) + \gamma /2 + \epsilon \ln \frac{1}{2}\right]\right)$$
	for every $\epsilon \leq \epsilon _0$. 
	Since the family  $\left\{\nu _\epsilon \right\}_{\epsilon >0}$ is the invariant measures of Eq. (\ref{sbe2}), we have 
	\begin{align*}
		\nu _\epsilon \left(B_H(\phi ,\delta )\right) &= \int_{H} \mathbb{P} \left(\| u_\epsilon ^x(T) -\phi \Vert _H <\delta \right) \,d\nu _\epsilon (x ) \\ &\geq \int_{H} \mathbb{P} \left(\left\| u_\epsilon ^x -v^x\right\Vert _{L^\infty([0,T];H)} < \delta /2 \right) \,d\nu _\epsilon (x )\\ &\geq \int_{B_H(0,R)} \mathbb{P} \left(\left\| u_\epsilon ^x -v^x\right\Vert _{L^\infty([0,T];H)} < \delta /2 \right) \,d\nu _\epsilon (x )\\ &\geq \nu _\epsilon \left(B_H(0,R)\right) \inf _{x \in B_H(0,R)}\mathbb{P} \left(\left\| u_\epsilon ^x -v^x\right\Vert _{L^\infty([0,T];H)} < \delta /2 \right) \\ &\geq \exp\left(\frac{\epsilon \ln\frac{1}{2}}{\epsilon }\right)\inf _{x \in B_H(0,R)}\exp\left(-\frac{1}{\epsilon }\left[I_T^x(v^x) + \gamma /2 + \epsilon \ln \frac{1}{2}\right]\right) \\ &= \inf _{x \in B_H(0,R)}\exp\left(-\frac{1}{\epsilon }\left[I_T^x(v^x) + \gamma /2 \right]\right).
	\end{align*}
	for every $\epsilon \leq \epsilon _0$. Therefore, to complete the proof, it suffices to find a sufficiently large $T$ such that for each $x \in B_H(0,R)$, there exists a path $v^x \in L^\infty([0,T];H)$ with $v^x(0) =x$ that satisfies $$I_T(v^x) \leq U^\alpha(\phi) + \gamma /2$$ and $$\| v^x(T) - \phi \Vert _H < \delta /2.$$ 
	For this, denote the solution of skeleton function Eq. (\ref{skt eq 2}) as $u^x_f$. By the definition of $U^\alpha(\phi )$, there exist $T_2 > 0$, $u_{\bar{f}}^0 \in C([0,T];H^{-\alpha})$ and $\bar{f} \in L^2([0,T];H)$ such that $u^0_{\bar{f}}(0)=0$, $u^0_{\bar{f}}(T_2)=\phi $ and $$\frac{1}{2}\int_{0}^{T_2} \| \bar{f}(t)\Vert_H^2  \,dt = I_{T_2}( u^0_{\bar{f}}) \leq U^\alpha(\phi ) + \gamma /2.$$ Meanwhile, by (\ref{u Gronwall 1}) when $f =0$, we have that $$ \| u^x_0(t)\Vert _H^2 \leq \| x\Vert ^2 \exp(-t).$$ Thus, for any $\lambda $, we can take $T_1= T_1(\lambda )$ large enough such that 
	$$ \sup_{x \in B_H(0,R)}\left\| u^x_0(T_1)\right\Vert _H \leq \lambda.$$ 
	We choose the paths $v^x=u^x_f$ with  $f \in L^2([0,T];H)$ defined by
	$$f(t):=\begin{cases}
		0, &t \in [0 ,T_1], \\ 
		\bar{f} (t-T_1), &t \in [T_1,T_1+T_2],
	\end{cases}$$
	and $T= T_1 + T_2$. By (\ref{u Gronwall 1 b}), $v^x \in L^\infty([0,T];H)$. Clearly, we have \begin{align*}
		I_T(v^x) &= \frac{1}{2}\int_{0}^{T} \| f(t)\Vert_H^2  \,dt =\frac{1}{2}\int_{0}^{T_2} \| \bar{f}(t)\Vert_H^2  \,dt  \leq U^\alpha(\phi ) + \gamma /2.
	\end{align*}
	Moreover, from the above construction and Proposition \ref{proposition x-y} we have that
	\begin{align*}
		\left\| u^x_f(T) - \phi \right\Vert _H &\leq \sup_{z \in B_H(0,\lambda ) }\left\| u^z_{\bar{f}}(T_2) - u^0_{\bar{f}}(T_2)\right\Vert_H  \\ &\leq \sup_{z \in B_H(0,\lambda )}\| z\Vert _H \exp\left(C\| z\Vert^2  + C\| \bar{f}\Vert _{L^2([0,T_2];H)}^2\right).
	\end{align*}
	Therefore, we can choose $\lambda $ small enough such that $$\| u^x_f(T) -\phi \Vert_H < \delta /2.$$ In conclusion, the paths $v^x(t) = u^x_f(t)$ satisfies the two desired conditions. The proof is completed.
\end{proof}

\subsection{Exponential estimates}\label{sec:exp esti}
In the proof of upper bound for large deviations principle we need to prove the exponential tightness for the invariant measure $\left\{\nu _\epsilon \right\}_{\epsilon >0}$ on a compact subspace of $H$. Here, we choose the compact subspace to be $H^{2\sigma }$, for some small $\sigma>0$. To achieve that, we need the following lemmas.


\begin{lemma}\label{exp estimate Z}
	Under Assumption \ref{assumption 2}, for some $0<\sigma <1/4 -\alpha/2$ and any $r>0$ there exists $\epsilon _r >0$ and $R_r >0$ such that for every $t \geq 0$
	$$ \mathbb{P} \left(\left\| Z_\epsilon (t)\right\Vert_{H^{2\sigma }} \geq R_r \right) \leq \exp\left(-\frac{r}{\epsilon }\right), \ \epsilon  \leq \epsilon _r.$$
\end{lemma}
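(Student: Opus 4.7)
The random variable $Z_\epsilon(t)$ is a centred Gaussian in $H^{2\sigma}$, so the natural route is a Fernique-type exponential Chebyshev inequality based on the diagonal representation of its covariance.

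First I will diagonalise. Writing $Z_\epsilon(t) = \sum_{k \in \mathbb{N}} \zeta_k^\epsilon(t)\, e_k$, the coordinates $\zeta_k^\epsilon(t)$ are independent centred Gaussians with
$$\mathrm{Var}\bigl(\zeta_k^\epsilon(t)\bigr) \;=\; \frac{\epsilon\,(k\pi)^{2\alpha}}{1+\delta(\epsilon)(k\pi)^{2\beta}}\cdot\frac{1-e^{-2(k\pi)^2 t}}{2(k\pi)^2} \;\leq\; \frac{\epsilon}{2}(k\pi)^{2\alpha-2},$$
uniformly in $t \geq 0$. Since $\| Z_\epsilon(t)\Vert_{H^{2\sigma}}^2 = \sum_k (k\pi)^{4\sigma}\zeta_k^\epsilon(t)^2$, the $k$-th summand is a squared Gaussian with variance bounded by $\tfrac{\epsilon}{2}(k\pi)^{4\sigma+2\alpha-2}$. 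The assumption $\sigma < 1/4 - \alpha/2$ gives $4\sigma + 2\alpha - 2 < -1$, and hence
$$M := \sum_{k \in \mathbb{N}} (k\pi)^{4\sigma+2\alpha-2} < \infty, \qquad \sup_k (k\pi)^{4\sigma+2\alpha-2} = \pi^{4\sigma+2\alpha-2} < 1.$$
Notice that the regularising factor $(1+\delta(\epsilon)(k\pi)^{2\beta})^{-1}$ only improves these estimates, so the quantitative part of Assumption \ref{assumption 2} plays no role here.

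Next I will apply Markov's inequality with exponent $\lambda = \eta/\epsilon$ for a fixed $\eta \in (0,1)$. By independence and the standard Gaussian moment generating identity,
$$\mathbb{E}\exp\Bigl(\frac{\eta}{\epsilon}\| Z_\epsilon(t)\Vert_{H^{2\sigma}}^2\Bigr) = \prod_k \Bigl(1 - \frac{2\eta}{\epsilon}(k\pi)^{4\sigma}\mathrm{Var}(\zeta_k^\epsilon(t))\Bigr)^{-1/2} \leq \prod_k \bigl(1 - \eta\, a_k\bigr)^{-1/2},$$
with $a_k := (k\pi)^{4\sigma+2\alpha-2}$ satisfying $\eta a_k \leq \eta < 1$. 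The elementary bound $-\log(1-x) \leq x/(1-\eta)$ on $[0,\eta]$ gives $\prod_k (1-\eta a_k)^{-1/2} \leq K_\eta := \exp\bigl(\eta M/(2(1-\eta))\bigr)$, a constant independent of $\epsilon$ and $t$.

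Consequently, for every $R > 0$,
$$\mathbb{P}\bigl(\| Z_\epsilon(t)\Vert_{H^{2\sigma}} \geq R\bigr) \leq K_\eta\exp\Bigl(-\frac{\eta R^2}{\epsilon}\Bigr),$$
and the lemma follows by choosing $R_r$ so large that $\eta R_r^2 \geq r + \epsilon_r \log K_\eta$ for all $\epsilon \leq \epsilon_r$. The argument is essentially soft Gaussian analysis; the only subtle point is tuning the Fernique parameter $\lambda = \eta/\epsilon$ so that it is compatible with the $O(\epsilon)$ operator norm of the covariance of $Z_\epsilon(t)$ on $H^{2\sigma}$, which is made possible precisely by the constraint $\sigma < 1/4 - \alpha/2$ forcing $\sup_k a_k < 1$.
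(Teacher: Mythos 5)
Your proof is correct but follows a genuinely different route from the paper's. You diagonalise the Gaussian $Z_\epsilon(t)$ over the eigenbasis, bound each coordinate variance by $\tfrac{\epsilon}{2}(k\pi)^{2\alpha-2}$ uniformly in $t$, multiply the one-dimensional Gaussian MGFs (valid because $\sigma<1/4-\alpha/2$ forces $\sup_k a_k=\pi^{4\sigma+2\alpha-2}<1$, so a Fernique parameter $\lambda=\eta/\epsilon$ with $\eta<1$ stays below the blow-up threshold uniformly in $\epsilon$), and finish with exponential Chebyshev. The paper instead introduces an auxiliary martingale $\tilde Z_\epsilon$, uses a Gaussian monotonicity comparison $\mathbb{E} F_\epsilon(Z_\epsilon(t))\leq\mathbb{E} F_\epsilon(\tilde Z_\epsilon(1))$ for the functional $F_\epsilon(x)=\exp((1+\|x\|_{H^{2\sigma}}^2)^{1/2}/\epsilon)$, and then applies It\^o's formula to $F_\epsilon(\tilde Z_\epsilon(\cdot))$ together with Gronwall. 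Both give the same quantitative content (a bound $\leq K\exp(-cR^2/\epsilon)$ with $K$ controlled by $\sum_k(k\pi)^{4\sigma+2\alpha-2}$, uniformly in $\epsilon$ and $t$). Your argument is more elementary and shorter, as it exploits the explicit diagonal Gaussian structure directly and avoids It\^o's formula entirely. The paper's It\^o-based approach is chosen for uniformity with the later exponential estimates (Lemmas on $u_\epsilon^0$ and on $Z_\epsilon$ in $L^4([t,t+1];L^4)$), where the process is no longer Gaussian and a diagonalisation argument of your type is unavailable. One small technical point worth stating if you write this up: the interchange of expectation and the infinite product of MGFs is justified by monotone convergence since each factor is $\geq 1$.
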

\begin{proof}
	 Set a new operator $\tilde{Q}_\epsilon $ as $$\sqrt{\tilde{Q}_\epsilon}e_k:= (k\pi)^\alpha\tilde{\sigma}_{\delta (\epsilon ),k} e_k := \frac{(k\pi)^\alpha\sigma _{\delta (\epsilon) ,k}}{k \pi}e_k = \frac{\sigma _{\delta (\epsilon) ,k}}{(k \pi)^{1-\alpha -2\sigma }}l_k, \ k \in \mathbb{N}, $$ where $l_k := \frac{1}{(k \pi)^{2\sigma}}e_k$ is standard basis of the space $H^{2\sigma}$, and a martingale $\tilde{Z}_\epsilon $ as 
	$$\tilde{Z}_\epsilon (t) := \sqrt{\epsilon }\int_{0}^{t} \sqrt{\tilde{Q}_\epsilon} \,dW(s).$$ It follows from the Itô isometry that
	\begin{equation}
		\label{Zprime 1}
		\mathbb{E} \| \tilde{Z}_\epsilon(1)\Vert _{H^{2\sigma }}^2 = \epsilon \int_{0}^{1} \sum_{k \in \mathbb{N} }\left(\frac{\sigma _{\delta (\epsilon) ,k}}{(k \pi)^{1-\alpha-2\sigma }}\right)^2 \,ds = \epsilon \sum_{k \in \mathbb{N} }\frac{\sigma^2 _{\delta (\epsilon) ,k}}{(k \pi)^{2-2\alpha-4\sigma }} < \infty.
	\end{equation}
Again by the Itô isometry we have for all $t \geq 0$
\begin{align*}
	\mathbb{E} \| Z_\epsilon (t)\Vert _{H^{2\sigma }}^2 &\leq \epsilon \sum_{k \in \mathbb{N} }(k \pi)^{4\sigma }\int_{0}^{T} e^{-2(k \pi)^2s} (k\pi)^{2\alpha}\sigma _{\delta (\epsilon), k}^2\,ds  \\ &\leq \epsilon \sum_{k \in \mathbb{N}}\frac{\sigma^2 _{\delta (\epsilon) ,k}}{2(k \pi)^{2-2 \alpha-4\sigma }} < \infty 
\end{align*}
which implies 
$$
	\mathbb{E} \| Z_\epsilon (t)\Vert _{H^{2\sigma }}^2 \leq \mathbb{E} \| \tilde{Z}_\epsilon(1)\Vert _{H^{2\sigma }}^2  , \ t \geq 0, \ \epsilon >0.
$$
For every $x \in H^{2\sigma }$ and $\epsilon >0$, define 
$$f(x) = \left(1+ \| x\Vert^2 _{H^{2\sigma }}\right)^{1/2}$$
and
$$F_\epsilon(x) = \exp\left(\frac{f(x)}{\epsilon }\right).$$
By the Gaussian property of $Z_\epsilon (t)$ and $\tilde{Z}_\epsilon  (1)$ we can rewrite them as 
$$Z_\epsilon (t) = \sum_{k \in \mathbb{N} }\lambda _{\epsilon ,k} (t)\beta _k l_k, \ \tilde{Z}_\epsilon  (1) = \sum_{k \in \mathbb{N} }\tilde{\lambda} _{\epsilon, k} \beta _k l_k ,$$
where $\beta _k$ is standard Gaussian random variables and $\lambda _{\epsilon ,k} (t) \leq \tilde{\lambda} _{\epsilon, k} = \frac{\sigma _{\delta(\epsilon) ,k}}{(k \pi)^{1-\alpha-2\sigma}}$ for any $t \geq 0,\ k \in \mathbb{N}$. Since 
\begin{align*}
	\mathbb{E} \left[F_\epsilon(X)\right] &= \int_{0}^{+ \infty} \mathbb{P} (F_\epsilon(X) \geq x) \,dx \\ &= \exp\left(\frac{1}{\epsilon }\right)+ \int_{\exp\left(1/\epsilon \right)}^{+ \infty} \mathbb{P} \left(\| X\Vert _{H^{2\sigma}} \geq \sqrt{(\epsilon\ln x)^2-1}  \right) \,dx
\end{align*}
and $$\mathbb{P} \left(\| Z_\epsilon (t)\Vert _{H^{2\sigma}} \geq C \right) \leq \mathbb{P} \left(\| \tilde{Z}_\epsilon  (1)\Vert _{H^{2\sigma}} \geq C \right)$$ for all constant $C$ and $t \geq 0$ due to $\lambda _{\epsilon ,k} (t) \leq \tilde{\lambda} _{\epsilon, k}$, it follows that
$$\mathbb{E} \left[F_\epsilon \left(Z_\epsilon (t)\right)\right] \leq \mathbb{E} \left[F_\epsilon \left(\tilde{Z}_\epsilon(1)\right)\right], \ t \geq 0, \ \epsilon >0.$$ Moreover, by taking the derivative in $H^{2\sigma} $ space, we have that
$$\left| D_{H^{2\sigma} }f(x)\right\vert_{L(H^{2\sigma })}  \leq 1 , \ \left| D_{H^{2\sigma} }^2f(x) \right\vert_{L(H^{2\sigma }\otimes H^{2\sigma } ; \mathbb{R} )}  \leq 2$$ and $$D_{H^{2\sigma} }^2F_\epsilon (x) = \frac{1}{\epsilon ^2}F_\epsilon (x)D_{H^{2\sigma} }f(x) \otimes  D_{H^{2\sigma} }f(x) + \frac{1}{\epsilon }F_\epsilon (x)D_{H^{2\sigma} }^2f(x)I .$$
By Itô's formula, we get 
\begin{align*}
	\mathbb{E} \left[F_\epsilon(\tilde{Z}_\epsilon(t))\right] &= \exp(\epsilon ^{-1}) + \mathbb{E} \int_{0}^{t} \left[\frac{\epsilon }{2}\sum_{k \in \mathbb{N}}D^2_{H^{2\sigma} } F_\epsilon (\tilde{Z}_\epsilon(s))\left( \frac{\sigma _{\epsilon ,k}}{(k \pi)^{1-\alpha -2\sigma }}l_k,\frac{\sigma _{\epsilon ,k}}{(k \pi)^{1-\alpha-2\sigma }}l_k\right)\right] \,ds  \\ &\leq \exp(\epsilon ^{-1}) + \frac{\epsilon }{2}\int_{0}^{t} \mathbb{E}\left[ F_\epsilon (\tilde{Z}_\epsilon(s)) \left(\frac{1}{\epsilon ^2}+ \frac{2}{\epsilon }\right) \sum_{k \in \mathbb{N}}\frac{\sigma^2 _{\epsilon ,k}}{(k \pi)^{2-2\alpha -4 \sigma }}\right]\,ds \\ & \leq \exp(\epsilon ^{-1}) + C_Q^\alpha \left(\frac{1}{2\epsilon }+ 1\right) \int_{0}^{t} \mathbb{E} \left[F_\epsilon (\tilde{Z}_\epsilon(s))\right] \,ds,
\end{align*}
where $$C_Q^\alpha  := \sup_{\epsilon > 0} \sum_{k \in \mathbb{N}}\frac{\sigma^2 _{\delta (\epsilon) ,k}}{(k \pi)^{2-2\alpha -4\sigma }} < \infty.$$
It follows from the Gronwall inequality that
$$\mathbb{E} \left[F_\epsilon(\tilde{Z}_\epsilon(1))\right] \leq \exp(\epsilon ^{-1})\exp\left(\frac{C_Q^\alpha (1/2 +\epsilon )}{\epsilon }\right) = \exp \left(\frac{C_Q^\alpha (1/2 +\epsilon ) + 1}{\epsilon }\right).
$$
Thus, for any $r>0$ and $\epsilon _r >0$ such that for any $\epsilon \leq \epsilon _r$
\begin{align*}
	\mathbb{P} \left(\left\| Z_\epsilon (t)\right\Vert_{H^{2\sigma }} \geq R_r \right) &=\mathbb{P} \left(\exp\left(\frac{\left\| Z_\epsilon (t)\right\Vert_{H^{2\sigma }}}{\epsilon }\right) \geq \exp\left(\frac{R_r}{\epsilon }\right) \right) \\ &\leq \exp\left(-\frac{R_r}{\epsilon }\right)\mathbb{E} \left[\exp\left(\frac{\left\| Z_\epsilon (t)\right\Vert_{H^{2\sigma}}}{\epsilon }\right)\right] \\ & \leq \exp\left(-\frac{R_r}{\epsilon }\right)\mathbb{E} \left[F_\epsilon \left(Z_\epsilon (t)\right)\right]\\ &\leq \exp\left(-\frac{R_r}{\epsilon }\right)\mathbb{E} \left[F_\epsilon \left(\tilde{Z}_\epsilon (1)\right)\right]\\ &\leq \exp \left(-\frac{R_r-C_Q^\alpha (1/2 +\epsilon ) - 1}{\epsilon }\right) \leq \exp\left(-\frac{r}{\epsilon }\right). 
\end{align*}
Here we only need to take $R_r = r + C_Q^\alpha (1/2 +\epsilon ) +1$.
\end{proof}

\begin{lemma}
	\label{exponential estimate 1}
	Under Assumption \ref{assumption 2}, for any $r> 0$ there exist $\epsilon _r >0$ and $R_r >0$ such that for every $T \geq 0$
	$$ \mathbb{P} \left(\left  \| Z_\epsilon \right\Vert_{L^4([T,T+1];L^4)} \geq R_r \right) \leq \exp\left(-\frac{r}{\epsilon }\right), \ \epsilon  \leq \epsilon _r.$$
\end{lemma}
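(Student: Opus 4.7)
The plan is to reduce the general $T$ to $T=0$ via the Markov property of the OU process $Z_\epsilon$, and then combine Lemma \ref{exp estimate Z} with Gaussian concentration for the resulting stochastic integral on a unit time interval. For $t \in [0,1]$ I would decompose
\begin{equation*}
Z_\epsilon(T+t) = S(t)\,Z_\epsilon(T) + \bar{Z}_{\epsilon,T}(t), \qquad \bar{Z}_{\epsilon,T}(t) := \int_T^{T+t} S(T+t-s)\sqrt{\epsilon Q_\epsilon}\,dW(s),
\end{equation*}
where $\bar{Z}_{\epsilon,T}$ is independent of $\mathcal{F}_T$ and has the same law in $L^4([0,1];L^4)$ as $Z_\epsilon$ restricted to $[0,1]$. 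By the triangle inequality, it suffices to bound the $L^4([0,1];L^4)$-norm of each piece exponentially, uniformly in $T$.

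For the deterministic piece I would apply the one-dimensional $L^2 \to L^4$ smoothing of the heat semigroup, $\|S(t) x\Vert_{L^4} \leq C t^{-1/8}\|x\Vert_H$, which yields
\begin{equation*}
\|S(\cdot)\,Z_\epsilon(T)\Vert_{L^4([0,1];L^4)}^4 \leq C\int_0^1 t^{-1/2}\,dt \cdot \|Z_\epsilon(T)\Vert_H^4 \leq C'\,\|Z_\epsilon(T)\Vert_{H^{2\sigma}}^4,
\end{equation*}
where I used $H^{2\sigma}\hookrightarrow H$. Lemma \ref{exp estimate Z} then delivers $\mathbb{P}\bigl(\|S(\cdot)Z_\epsilon(T)\Vert_{L^4([0,1];L^4)} \geq R_r/2\bigr) \leq \exp(-r/\epsilon)$ for $R_r$ sufficiently large and $\epsilon$ sufficiently small, uniformly in $T \geq 0$.

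For the stochastic piece I would rescale $Y_\epsilon := Z_\epsilon/\sqrt{\epsilon}$, which is a centered Gaussian element of the separable Banach space $L^4([0,1];L^4)$. The moment computation from Proposition \ref{wellposedness of SBE}, adapted to the present $Q_\epsilon$, yields
\begin{equation*}
\mathbb{E}\sup_{t\in[0,1]}\|Y_\epsilon(t)\Vert_{L^4}^4 \leq C\Bigl(\sum_{k\in\mathbb{N}} \frac{(k\pi)^{2\alpha}\sigma_{\delta(\epsilon),k}^2}{k^2}\Bigr)^2 \leq C\Bigl(\sum_{k\in\mathbb{N}} (k\pi)^{2\alpha-2}\Bigr)^2 < \infty,
\end{equation*}
uniformly in $\epsilon$, since $2\alpha<1$. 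By a uniform-in-$\epsilon$ version of Fernique's theorem (the Gaussian laws of $Y_\epsilon$ are tight in $L^4([0,1];L^4)$ with uniformly bounded weak variance, which follows from the above fourth-moment bound via Borell--TIS together with Kahane--Khintchine equivalence of Gaussian moments), there exist $\lambda>0$ and $M<\infty$ independent of $\epsilon$ with $\sup_\epsilon \mathbb{E}\exp(\lambda\|Y_\epsilon\Vert_{L^4([0,1];L^4)}^2) \leq M$. Chebyshev then gives
\begin{equation*}
\mathbb{P}\bigl(\|\bar{Z}_{\epsilon,T}\Vert_{L^4([0,1];L^4)} \geq R_r/2\bigr) = \mathbb{P}\bigl(\|Y_\epsilon\Vert_{L^4([0,1];L^4)}^2 \geq R_r^2/(4\epsilon)\bigr) \leq M\exp\bigl(-\lambda R_r^2/(4\epsilon)\bigr),
\end{equation*}
which, after choosing $R_r$ large in $r$, is at most $\exp(-r/\epsilon)$ for small $\epsilon$. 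A union bound then concludes.

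The main technical obstacle is verifying the uniform-in-$\epsilon$ Fernique bound; beyond that, the argument reduces to the already-established moment calculation and the pointwise-in-$t$ estimate of Lemma \ref{exp estimate Z}.
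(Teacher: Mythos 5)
Your argument is correct, but it takes a genuinely different route to the paper's. The paper first upgrades Lemma~\ref{LDPZ} to a Freidlin--Wentzell ULDP for $\{\mathcal{L}(Z_\epsilon^x)\}$ in $L^4([0,1];L^4)$ uniformly over initial data $x\in H$ (via Theorems 2.10 and 2.13 of \cite{salins2019equivalences}), then extracts a uniform exponential estimate of the form $\mathbb{P}(\|Z_\epsilon^x\|_{L^4([0,1];L^4)}\geq C(\|x\|_H+\sqrt{r+1})+1)\leq e^{-r/\epsilon}$ in the spirit of Theorem~\ref{uniform contraction principle}, and finally substitutes $x=Z_\epsilon(T)$ and controls $\|Z_\epsilon(T)\|_H$ with Lemma~\ref{exp estimate Z}. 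You instead split $Z_\epsilon(T+\cdot)=S(\cdot)Z_\epsilon(T)+\bar Z_{\epsilon,T}$ explicitly, handle the free part by heat-semigroup smoothing plus Lemma~\ref{exp estimate Z}, and handle the fluctuation $\bar Z_{\epsilon,T}$ by direct Gaussian concentration for $Y_\epsilon=\bar Z_{\epsilon,T}/\sqrt\epsilon$ as a Gaussian element of $L^4([0,1];L^4)$. This is a sound and arguably more elementary alternative that avoids proving an FWULDP over all of $H$ just for this lemma. One small caveat: your parenthetical justification of the ``uniform-in-$\epsilon$ Fernique bound'' is a bit muddled (Borell--TIS is essentially the conclusion, not an ingredient, and tightness is not what is needed). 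The clean route is the quantitative Fernique theorem (e.g.\ Proposition 2.19 in \cite{da2014stochastic}): the uniform bound $\sup_\epsilon\mathbb{E}\sup_t\|Y_\epsilon(t)\|_{L^4}^4\leq C$ gives a uniform $r$ with $\mathbb{P}(\|Y_\epsilon\|_{L^4L^4}\leq r)\geq 3/4$ by Markov, and Fernique's explicit constants then yield $\lambda,M$ independent of $\epsilon$. With this phrasing the proof is complete, and both approaches are of comparable length.
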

\begin{proof}
First, we want to prove that $\{\mathcal{L}(Z_\epsilon^x)\}_{\epsilon>0}$ satisfies FWULDP in $L^4([0,1]; L^4)$, uniformly with respect to $x \in H$. To achieve that, by Theorem 2.10 and Theorem 2.13 of \cite{salins2019equivalences}, it is enough to show for any $N >0$ and $\delta >0$
	$$\lim_{\epsilon  \rightarrow 0}\sup_{x \in H}\sup_{f \in \mathcal{P} _2^N}\mathbb{P} \left(\|Z_{\epsilon ,f}^x - Z_f^x\Vert _{L^4([0,1];L^4)} > \delta \right) =0.$$
 Notice that 
  $$Z_{\epsilon,f}^x(t) - Z_{f}^x(t) = \sqrt{\epsilon }\int_{0}^{t} S(t-s)\sqrt{Q_\epsilon} \,dW(s) + \int_{0}^{t} S(t-s)\left[\sqrt{Q_\epsilon}f(s) - \sqrt{Q_0}f(s)\right] \,ds.$$ Then, it follows from the same discussions as in Lemma \ref{LDPZ} that,
 $$\lim_{\epsilon  \rightarrow 0}\sup_{x \in H}\sup_{f \in \mathcal{P} _2^N}\mathbb{P} \left(\|Z_{\epsilon ,f}^x - Z_f^x\Vert _{L^4([0,1];L^4)} > \delta \right) \leq \lim_{\epsilon  \rightarrow 0}\sup_{x \in H}\sup_{f \in \mathcal{P} _2^N}\mathbb{P} \left(\|Z_{\epsilon ,f}^x - Z_f^x\Vert _{C([0,1];L^4)} > \delta \right) =0.$$ 
 Therefore, $\{\mathcal{L}(Z_\epsilon^x)\}_{\epsilon>0}$ satisfies FWULDP in $L^4([0,1];L^4)$ with good rate function $J^x$ uniform in $H$, where
 $$J^x(z) = \inf \left\{\frac{1}{2}\int_{0}^{1}\|f(t)\Vert_H^2 \,dt : f \in L^2([0,1];H), z = Z^x_f \right\}$$ Define $\Phi^x(r):= \{z \in L^4([0,1];L^4): J^x(z)\leq r \}$, by discussions in Proposition \ref{compact1}, 
 \begin{align*}
     \sup_{z \in \Phi^x(r)}\|z\Vert_{L^4([0,1];L^4)} &\leq \left(\int_{0}^{1}\|S(t)x\Vert_{L^4}^4 \,dt \right)^{1/4} + \sup_{f \in \mathcal{P}_2^{2r}}\sup_{t \in [0,1]}\left\|\int_{0}^{t}S(t-s)\sqrt{Q_0}f(s) \,ds \right\Vert_{L^4} \\
     &\leq \left(\int_{0}^{1}\|S(t)x\Vert_{H^{1/4}}^4 \,dt\right)^{1/4} + C\sqrt{r} \\
     &=   \left(\int_{0}^{1}\|A^{1/8}S(t)x\Vert_{H}^4 \,dt\right)^{1/4} + C\sqrt{r}\\
     &\leq  \left(\int_{0}^{1}\frac{1}{t^{1/2}}\|x\Vert_{H}^4 \,dt\right)^{1/4} + C\sqrt{r} \leq C(\|x\Vert_H + \sqrt{r})
     \end{align*}
Then, as a consequence of FWULDP, using similar arguments in Theorem \ref{uniform contraction principle}, for any $\delta>0$ and $\gamma>0$, there exits $\epsilon_0 >0$ such that for all $x \in H$,  
$$\mathbb{P}\left(\|Z_\epsilon^x\Vert_{L^4([0,1];L^4)} \geq C(\|x\Vert_{H} + \sqrt{r+\gamma})+ \delta \right) \leq \exp{\left(-\frac{r}{\epsilon} \right)}.$$ For the convenience of computations, we choose $\delta=\gamma=1$. Therefore, for every $T \geq 1$,
$$\mathbb{P}\left(\|Z_\epsilon\Vert_{L^4([T,T+1];L^4)} \geq C\left(\|Z_\epsilon(T)\Vert_{H} + \sqrt{r+1}+ 1 \right) \right) \leq \exp{\left(-\frac{r}{\epsilon} \right)}.$$ Moreover, notice that $$\|Z_\epsilon(T)\Vert_H < \bar{R}_r \  \text{and} \  \|Z_\epsilon\Vert_{L^4([T,T+1];L^4)} < C\left(\|Z_\epsilon(T)\Vert_{H} + \sqrt{r+1}+ 1 \right) $$
implies
$$\|Z_\epsilon\Vert_{L^4([T,T+1];L^4)} < C\left( \bar{R}_r + \sqrt{r+1}+ 1 \right).$$
  It then follows that \begin{align*}
      \mathbb{P}\left(\|Z_\epsilon\Vert_{L^4([T,T+1];L^4)} \geq C\left(\bar{R}_r + \sqrt{r+1}+ 1 \right) \right) & \leq \mathbb{P}\left(\|Z_\epsilon\Vert_{L^4([T,T+1];L^4)} \geq C\left(\|Z_\epsilon(T)\Vert_{H} + \sqrt{r+1}+ 1 \right) \right)\\  &\ \ \ \ + \mathbb{P}\left(\|Z_\epsilon(T)\Vert_H \geq \bar{R}_r \right).      
      \end{align*}
      Thanks to Lemma \ref{exp estimate Z}, there exist $\epsilon_r^1$ and $\bar{R_r}$ such that for every $T\geq 0$,
      $$\mathbb{P}\left(\|Z_\epsilon(T)\Vert_H \geq \bar{R}_r \right) \leq \exp{\left(-\frac{r}{\epsilon} \right)}, \ \epsilon \leq \epsilon_r^1.$$ We set $\epsilon_r = \epsilon_0 \wedge \epsilon_r^1$, the desired result follows.

\end{proof}

\begin{lemma}\label{exp esti u H}
	Under Assumption \ref{assumption 2}, for any $r>0$ there exists $\epsilon _r >0$ and $R_r >0$ such that 
	$$ \lim_{T\rightarrow\infty}\frac{1}{T}\int_{0}^{T}  \mathbb{P} \left(\left\| u_\epsilon^0 (t)\right\Vert_H \geq R_r \right) \,dt \leq \exp\left(-\frac{r}{\epsilon }\right), \ \epsilon  \leq \epsilon _r.$$
\end{lemma}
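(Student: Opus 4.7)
The plan is to apply It\^o's formula to the exponential Lyapunov functional $F_\rho(v):=e^{\rho\|v\|_H^2}$ with $\rho$ of order $1/\epsilon$, and then to extract the desired tail estimate by Chebyshev's inequality. Three structural features of the setting make this feasible: under Assumption~\ref{assumption 2} the covariance $Q_\epsilon$ is trace class (as verified in Section~\ref{trace class}), so It\^o's formula is available for $\|u_\epsilon^0(t)\|_H^2$; the cubic nonlinearity contributes nothing to the drift of $F_\rho(u_\epsilon^0)$, because $\langle D(u^2),u\rangle_H=0$ under the Dirichlet boundary condition; and Assumption~\ref{assumption 2} guarantees $\epsilon\,\text{Tr}(Q_\epsilon)\to 0$, which is the crucial quantitative input.

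Writing $\rho=\lambda/\epsilon$ for a small constant $\lambda>0$ to be chosen, the It\^o computation yields
\begin{equation*}
\mathcal{L}F_\rho(v)=F_\rho(v)\bigl[-2\rho\|v\|_{H^1}^2+2\rho^2\epsilon\|\sqrt{Q_\epsilon}v\|_H^2+\rho\epsilon\,\text{Tr}(Q_\epsilon)\bigr].
\end{equation*}
The bound $\|\sqrt{Q_\epsilon}v\|_H^2\leq\|v\|_{H^\alpha}^2$ (which follows from $\sigma_{\epsilon,k}^2\leq(k\pi)^{2\alpha}$), combined with the interpolation $\|v\|_{H^\alpha}^2\leq\eta\|v\|_{H^1}^2+C_\eta\|v\|_H^2$, allows me to choose $\eta$ so that $2\rho^2\epsilon\eta\leq\rho$, thereby absorbing half of the dissipation $-2\rho\|v\|_{H^1}^2$. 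Coupled with Poincar\'e's inequality, and taking $\lambda$ small enough that $C_\eta$ is dominated by the residual Poincar\'e contribution, this gives $\mathcal{L}F_\rho(v)\leq F_\rho(v)\bigl[-a\rho\|v\|_H^2+\lambda\,\text{Tr}(Q_\epsilon)\bigr]$ for some absolute constant $a>0$. The elementary inequality $xe^x\geq e^x-1$ for $x\geq 0$, applied with $x=\rho\|v\|_H^2$, then converts this into the standard Lyapunov form $\mathcal{L}F_\rho\leq-\mu F_\rho+\kappa$ with positive constants $\mu,\kappa$, provided that $\lambda\,\text{Tr}(Q_\epsilon)$ stays below a fixed fraction of $a\pi^2$.

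Dynkin's formula, after a standard localization argument to handle the local martingale, then yields the uniform-in-$t$ moment bound $\sup_{t\geq 0}\mathbb{E}F_\rho(u_\epsilon^0(t))\leq C$, and Chebyshev's inequality gives $\mathbb{P}(\|u_\epsilon^0(t)\|_H\geq R_r)\leq Ce^{-\lambda R_r^2/\epsilon}$ uniformly in $t\geq 0$. Choosing $R_r^2=(r+\log C)/\lambda$ produces the bound $\leq e^{-r/\epsilon}$ for all $\epsilon\leq\epsilon_r$; the time-averaged estimate claimed in the lemma is then immediate by integrating against $\frac{1}{T}\,dt$ and sending $T\to\infty$.

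The main obstacle is the parameter-balancing: the interpolation constant $C_\eta$ grows as $\eta\to 0$, so $\lambda$ is forced to be small; at the same time, $\lambda\,\text{Tr}(Q_\epsilon)$ must remain small for $\mu$ to be positive and for $R_r$ to stay independent of $\epsilon$. Assumption~\ref{assumption 2}, specifically $\epsilon\delta(\epsilon)^{-(1+2\alpha)/(2\beta)}\to 0$, is exactly what enables this balance. If a fully pointwise Lyapunov argument turns out to be too delicate because of the possible growth of $\text{Tr}(Q_\epsilon)$ as $\epsilon\to 0$, an alternative is to integrate $\mathcal{L}F_\rho$ against the invariant measure $\nu_\epsilon$ (so that $\int\mathcal{L}F_\rho\,d\nu_\epsilon=0$) in order to deduce $\int F_\rho\,d\nu_\epsilon\leq C$, and then transfer the resulting exponential tail of $\nu_\epsilon$ to the time-averaged quantity via the ergodic identity $\lim_{T\to\infty}\frac{1}{T}\int_0^T\mathbb{P}(u_\epsilon^0(t)\in A)\,dt=\nu_\epsilon(A)$, which is available thanks to the uniqueness of $\nu_\epsilon$ established in Section~\ref{trace class}.
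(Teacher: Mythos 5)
Your overall strategy — Itô's formula applied to an exponential Lyapunov functional of $\|u_\epsilon^0\|_H^2$ with exponent of order $1/\epsilon$, the cubic term dropping out by $\langle D(u^2),u\rangle_H=0$, and Chebyshev to close — is exactly the paper's. But a crucial quantitative step is wrong, and it cannot be patched by the side remarks you add.

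You claim that the drift inequality $\mathcal{L}F_\rho\leq F_\rho[-a\rho\|v\|_H^2+\lambda\,\mathrm{Tr}(Q_\epsilon)]$, after $xe^x\geq e^x-1$, yields $\mathcal{L}F_\rho\leq-\mu F_\rho+\kappa$ with \emph{positive constants} $\mu,\kappa$, ``provided that $\lambda\,\mathrm{Tr}(Q_\epsilon)$ stays below a fixed fraction of $a\pi^2$.'' That proviso cannot hold: with $\rho=\lambda/\epsilon$ the only thing Assumption~\ref{assumption 2} controls is $\rho\,\epsilon\,\mathrm{Tr}(Q_\epsilon)\cdot\epsilon=\lambda\,\epsilon\,\mathrm{Tr}(Q_\epsilon)\to0$, whereas the quantity that actually enters your drift is $\rho\,\epsilon\,\mathrm{Tr}(Q_\epsilon)=\lambda\,\mathrm{Tr}(Q_\epsilon)$. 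Since $\delta(\epsilon)\to0$, one has $\mathrm{Tr}(Q_\epsilon)=\sum_k(k\pi)^{2\alpha}(1+\delta(\epsilon)(k\pi)^{2\beta})^{-1}\to\infty$ by monotone convergence, so $\lambda\,\mathrm{Tr}(Q_\epsilon)$ eventually exceeds any fixed $a$, and $\mu$ becomes negative for all small $\epsilon$. Consequently the uniform-in-$t$ bound $\sup_{t\geq0}\mathbb{E}F_\rho(u_\epsilon^0(t))\leq C$ with an $\epsilon$-independent $C$ is false, and the final step ``choosing $R_r^2=(r+\log C)/\lambda$'' silently hides an $\epsilon$-dependence which, unmanaged, does not produce an $\epsilon$-independent $R_r$. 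The invariant-measure fallback you offer inherits the same defect (it again needs $\mu>0$), and is additionally circular, since $\int\mathcal{L}F_\rho\,d\nu_\epsilon=0$ presupposes $F_\rho\in L^1(\nu_\epsilon)$, which is what you are trying to establish.

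The paper evades this by \emph{not} separating the noise contribution from the dissipation into a $-\mu F+\kappa$ form with constants. It augments the functional to $F_\epsilon(t,x)=\exp\bigl(t+\tfrac{\|x\|_H^2}{2\epsilon}\bigr)$, keeps the trace term inside the bracket, and applies the sharp inequality $e^\xi(a-\xi)\leq e^{a-1}$ with $\xi=\|u\|_H^2/(2\epsilon)$ and $a=1+\tfrac12 C_\beta\delta(\epsilon)^{-(1+2\alpha)/(2\beta)}$. This gives $\mathbb{E}\exp(\|u_\epsilon^0(t)\|_H^2/(2\epsilon))\leq e^{-t}+\exp\bigl(\tfrac12 C_\beta\delta(\epsilon)^{-(1+2\alpha)/(2\beta)}\bigr)$, an $\epsilon$-dependent bound that blows up as $\epsilon\to0$, but whose logarithm grows only like $\mathrm{Tr}(Q_\epsilon)$. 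The Chebyshev exponent then reads $\bigl(R_r^2/2-\tfrac12\epsilon C_\beta\delta(\epsilon)^{-(1+2\alpha)/(2\beta)}\bigr)/\epsilon$, and since $\epsilon\,\delta(\epsilon)^{-(1+2\alpha)/(2\beta)}\to0$, one may fix $R_r^2=2r+1$ and take $\epsilon_r$ small. Your framework can be repaired in exactly this way — replace your constants $\mu,\kappa$ by $\mu_\epsilon,\kappa_\epsilon$ with $\mu_\epsilon$ bounded below and $\epsilon\log\kappa_\epsilon\to0$ — but as written there is a genuine gap in the parameter bookkeeping.
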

\begin{proof}
	By the ergodicity of $\nu _\epsilon $ we know the limit exists. Define a functional $F_\epsilon: \mathbb{R} \times H \rightarrow \mathbb{R} $  $$F_\epsilon (t,x) := \exp\left(t + \frac{\| x\Vert _H^2}{2\epsilon }\right).$$ Then we have 
	$$D_tF_\epsilon (t,x) = F_\epsilon (t,x), \ D_xF_\epsilon (t,x) = \frac{1}{\epsilon }F_\epsilon (t,x)x$$ and 
	$$D_x^2F_\epsilon (t,x) = \frac{1}{\epsilon ^2}F_\epsilon (t,x)x\otimes  x + \frac{1}{\epsilon }F_\epsilon (t,x)I .$$

	As a consequence of Itô's formula, we have \begin{align*}
		&\ \ \ \ \mathbb{E} F_\epsilon (t,u_\epsilon ^y(t)) \\&= F_\epsilon (0,y) + \mathbb{E} \int_{0}^{t} \bigg[ D_t F_\epsilon (s,u_\epsilon ^y(s)) + \left\langle D_xF_\epsilon (s,u_\epsilon ^y(s)), -Au_\epsilon ^y(s) + D\left(u_\epsilon ^y(s)^2\right)\right\rangle_H\\ 
  &\ \ \ \ +\frac{\epsilon }{2}\sum_{k \in \mathbb{N} }\left\langle D_x^2F_\epsilon (s,u_\epsilon ^y(s)) A^{\alpha}Q_{\delta (\epsilon )}e_k, e_k \right\rangle _H \bigg] \,ds \\ 
  & = F_\epsilon (0,y) + \mathbb{E} \int_{0}^{t} F_\epsilon (s,u_\epsilon ^y(s)) \bigg[1 - \frac{1}{\epsilon }\| u_\epsilon ^y(s)\Vert _{H^1}^2 + \frac{\epsilon }{2}\sum_{k \in \mathbb{N} }\bigg(\frac{1}{\epsilon ^2}\left| \langle u_\epsilon ^y(s), (k\pi)^\alpha \sigma _{\delta (\epsilon ),k}e_k\rangle _H \right\vert ^2 \\ 
  &\ \ \ \ + \frac{1}{\epsilon }\langle (k\pi)^{2\alpha}\sigma _{\delta (\epsilon ),k}^2e_k, e_k \rangle _H\bigg)\bigg] \,ds \\ 
  &\leq F_\epsilon (0,y) + \mathbb{E} \int_{0}^{t} F_\epsilon (s,u_\epsilon ^y(s)) \left[1 - \frac{1}{\epsilon }\| u_\epsilon ^y(s)\Vert _{H^1}^2 + \frac{1}{2\epsilon }\| u_\epsilon^y(s)\Vert _{H^\alpha}^2 + \frac{1}{2 }\sum_{k \in \mathbb{N} }(k\pi)^{2\alpha}\sigma _{\delta (\epsilon ),k}^2\right] \,ds \\ 
  &\leq F_\epsilon (0,y) + \mathbb{E} \int_{0}^{t} F_\epsilon (s,u_\epsilon ^y(s)) \left(1 - \frac{1}{2\epsilon }\| u_\epsilon ^y(s)\Vert _H^2 +  \frac{1}{2 }C_\beta \delta (\epsilon )^{-\frac{1+2\alpha}{2\beta}}\right) \,ds,
\end{align*}
	where in the second equality the nonlinear term was disposed due to the boundary condition; and in the last inequality $\sum_{k \in \mathbb{N} }(k\pi)^{2\alpha}\sigma _{\delta (\epsilon ),k}^2 \leq C_\beta \delta (\epsilon )^{-\frac{1+2\alpha}{2\beta}}$ and the Poincar\'e inequality were used. Hence, by the fact that $e^x(a-x) \leq e^{a-1}$ for every $a>1$ and $x \geq 0$, we have 
	\begin{align*}
		\mathbb{E} F_\epsilon (t,u_\epsilon ^y(t)) &\leq \exp\left(\frac{\| y\Vert _H^2}{2\epsilon }\right) + \mathbb{E} \int_{0}^{t} \exp(s) \exp\left(\frac{\| u_\epsilon ^y(s)\Vert _H^2}{2\epsilon }\right)\left(1 - \frac{1}{2\epsilon }\| u_\epsilon ^y(s)\Vert _H^2 +  \frac{1}{2 }C_\beta \delta (\epsilon )^{-\frac{1+2\alpha}{2\beta}}\right) \,ds \\ &\leq \exp\left(\frac{\| y\Vert _H^2}{2\epsilon }\right) + \mathbb{E} \int_{0}^{t} \exp(s) \exp\left( \frac{1}{2 }C_\beta \delta (\epsilon )^{-\frac{1+2\alpha}{2\beta}}\right) \,ds,
	\end{align*}
	which implies
	$$\mathbb{E}\exp\left(\frac{\| u_\epsilon ^0(t)\Vert _H^2}{2\epsilon }\right) \leq \exp(-t) + \exp\left( \frac{1}{2 }C_\beta \delta (\epsilon )^{-\frac{1+2\alpha}{2\beta}}\right).$$
	Since $\lim_{\epsilon \rightarrow 0}\epsilon \delta (\epsilon )^{-\frac{1+2\alpha}{2\beta}} =0$, then for any $r>0$ there exists $R_r>0$ and $\epsilon _r >0$ such that
	\begin{align*}
		\lim_{T\rightarrow\infty}\frac{1}{T}\int_{0}^{T}  \mathbb{P} \left(\left\| u_\epsilon^0 (t)\right\Vert_H \geq R_r \right) \,dt &\leq \exp\left(-\frac{R_r^2}{2\epsilon }\right)\frac{1}{T}\limsup_{T \rightarrow \infty}\int_{0}^{T} \mathbb{E}\exp\left(\frac{\| u_\epsilon ^0(t)\Vert _H^2}{2\epsilon }\right) \,dt \\ &\leq \exp\left(-\frac{R_r^2/2 - \frac{1}{2 }\epsilon C_\beta \delta (\epsilon )^{-\frac{1+2\alpha}{2\beta}}}{\epsilon }\right) \\ &\leq \exp(-\frac{r}{\epsilon }), \ \epsilon \leq \epsilon _r.
	\end{align*}
\end{proof}

\begin{lemma}\label{exponential estimate 2}
	Under Assumption \ref{assumption 2}, for some $0<\sigma <1/4 -\alpha/2$ and any $r>0$ there exist $\epsilon _r >0$ and $R_r >0$ such that 
	$$ \nu _\epsilon  \left(B_{H^{2\sigma} }^c(0,R_r) \right) \leq \exp\left(-\frac{r}{\epsilon }\right), \ \epsilon  \leq \epsilon _r.$$
\end{lemma}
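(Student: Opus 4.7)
My plan is to reduce the invariant-measure bound to a time-averaged probability bound on $u_\epsilon^0$ by invariance and ergodicity, decompose $u_\epsilon^0 = Y_\epsilon^0 + Z_\epsilon$, handle the $Z_\epsilon$ contribution via Lemma~\ref{exp estimate Z}, and control $Y_\epsilon^0$ in $H^{2\sigma}$ on a good event whose complement is exponentially small via Lemmas~\ref{exp estimate Z}--\ref{exp esti u H}.

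Invariance of $\nu_\epsilon$ and the ergodicity of $u_\epsilon^0$ recalled in Section~\ref{trace class} give, for any $R>0$,
\[
\nu_\epsilon\bigl(B^c_{H^{2\sigma}}(0,R)\bigr)=\lim_{T\to\infty}\frac{1}{T}\int_0^T\mathbb{P}\bigl(\|u_\epsilon^0(t)\|_{H^{2\sigma}}\geq R\bigr)\,dt,
\]
and the triangle inequality together with Lemma~\ref{exp estimate Z} applied pointwise in $t$ reduce matters to bounding the same time-averaged probability for $\|Y_\epsilon^0(t)\|_{H^{2\sigma}}$. Given $r>0$, I fix radii $R_1,R_2,R_3$ via Lemmas~\ref{exp estimate Z}, \ref{exp esti u H} and a mild strengthening of Lemma~\ref{exponential estimate 1} to a higher integrability exponent $p\geq 8$---obtainable by the same FWULDP argument applied in $L^p([0,1];L^4)$, which is available because the operator of Proposition~\ref{compact1} maps into $C^\delta([0,T];H^{2\gamma})\hookrightarrow L^p([0,T];L^4)$. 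I then introduce
\[
\mathcal{E}_t:=\bigl\{\|u_\epsilon^0(t-2)\|_H\leq R_1\bigr\}\cap\bigl\{\|Z_\epsilon(t-2)\|_H\leq R_3\bigr\}\cap\bigl\{\|Z_\epsilon\|_{L^p([t-2,t];L^4)}\leq R_2\bigr\},
\]
and a union bound gives $\limsup_{T\to\infty}\frac{1}{T}\int_0^T\mathbb{P}(\mathcal{E}_t^c)\,dt\lesssim\exp(-r/\epsilon)$.

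On $\mathcal{E}_t$ one has $\|Y_\epsilon^0(t-2)\|_H\leq R_1+R_3$, so \eqref{Y estimate 1}--\eqref{Y estimate 2} applied on $[t-2,t]$ with forcing $Z_\epsilon$ give deterministic control of $\|Y_\epsilon^0\|_{L^\infty([t-2,t];H)\cap L^2([t-2,t];H^1)}$ in terms of $R_1,R_2,R_3$. Picking $s^\star\in[t-2,t-1]$ with $\|Y_\epsilon^0(s^\star)\|_{H^1}$ controlled by its averaged $L^2$-bound, the mild identity
\[
Y_\epsilon^0(t)=S(t-s^\star)Y_\epsilon^0(s^\star)+\frac12\int_{s^\star}^t S(t-r)\,D\bigl[(Y_\epsilon^0+Z_\epsilon)^2\bigr](r)\,dr
\]
controls the linear term by $\|Y_\epsilon^0(s^\star)\|_{H^1}$ using $t-s^\star\geq 1$ and $H^1\hookrightarrow H^{2\sigma}$. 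For the Duhamel term, the factorization $A^\sigma S(\tau)D=A^{\sigma+1/2}S(\tau)\cdot A^{-1/2}D$ with $A^{-1/2}D$ bounded on $H$ (by Dirichlet integration by parts) gives $\|A^\sigma S(\tau)D v\|_H\leq C\tau^{-(\sigma+1/2)}\|v\|_{L^2}$; Hölder's inequality with exponents $(4/3,4)$, admissible thanks to $\sigma<1/4-\alpha/2$, reduces the integral to a bound on $\|Y_\epsilon^0+Z_\epsilon\|_{L^8([s^\star,t];L^4)}$, which follows for $Y_\epsilon^0$ from the interpolation $L^\infty(H)\cap L^2(H^1)\hookrightarrow L^8(H^{1/4})\hookrightarrow L^8(L^4)$ in one dimension and for $Z_\epsilon$ from the third event defining $\mathcal{E}_t$. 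This yields $\|Y_\epsilon^0(t)\|_{H^{2\sigma}}\leq K(R_1,R_2,R_3)$ on $\mathcal{E}_t$; choosing $R_r$ larger than $2K(R_1,R_2,R_3)$ plus the Lemma~\ref{exp estimate Z} constant, and absorbing the prefactor into the exponent by taking $\epsilon$ small, gives the stated bound.

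The principal obstacle is this last step: deducing an $H^{2\sigma}$-bound on $Y_\epsilon^0(t)$ from data in substantially weaker norms (an $H$-bound at $t-2$ and an integrated $L^p(L^4)$-bound on $Z_\epsilon$). The precise balance between the semigroup smoothing power $\sigma+1/2$, the Hölder pairing with $\|Y_\epsilon^0+Z_\epsilon\|_{L^4}^2$ in time, and the Sobolev interpolation available for $Y_\epsilon^0$ is what forces the strict upper bound $\sigma<1/4-\alpha/2$ in the hypothesis, the $\alpha/2$ slack coming from the requirement $2-2\alpha-4\sigma>1$ needed for the $H^{2\sigma}$-integrability estimate on $Z_\epsilon$ in Lemma~\ref{exp estimate Z}.
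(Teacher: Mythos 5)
Your overall architecture---reduce via ergodicity to a time-averaged probability, decompose $u_\epsilon^0 = Y_\epsilon^0 + Z_\epsilon$, estimate each piece on a good event whose complement is exponentially small via Lemmas~\ref{exp estimate Z}--\ref{exp esti u H}, and close using semigroup smoothing plus a time-H\"older estimate on the Duhamel nonlinear term---is the same as the paper's. The interesting divergence is how the stochastic convolution is fed into the Duhamel term. You bound $\|D(Y+Z)^2\|$ through its $L^2$-norm, land on $\|Y+Z\|_{L^{p}_t L^4_\xi}$, and discharge the $Z$-piece by appealing to a strengthened Lemma~\ref{exponential estimate 1} in $L^p(L^4)$. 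The paper instead bounds through the $L^1$-norm $\|(Y+Z)^2\|_{L^1}\le 2\|Y\|_H^2 + 2\|Z\|_H^2$ and is then forced to control $\bigl(\int_t^{t+1}\|Z_\epsilon(s)\|_H^p\,ds\bigr)^{2/p}$, which it does by a non-trivial It\^o's formula calculation with the exponential functional $F_\epsilon^p(x)=\exp\bigl((1+\|x\|_H^p)^{1/p}/\epsilon\bigr)$ (term $I_5$). Your route avoids that It\^o computation entirely, which is genuinely a different and arguably simpler mechanism for the same purpose; what it costs is a slightly more delicate smoothing/H\"older bookkeeping. The paper also does not need your 2-step lookback and intermediate time $s^\star$: because $S(1)$ already maps $H$ into $H^{2\sigma}$, the linear term $S(1)u_\epsilon^0(t)$ is handled directly.

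One step as written does not quite go through. You ask for an $L^p([0,1];L^4)$ version of Lemma~\ref{exponential estimate 1} with $p\ge 8$, citing the same FWULDP argument. But the proof of Lemma~\ref{exponential estimate 1} controls the level sets $\Phi^x(r)$ uniformly over $x\in H$ via the bound $\bigl(\int_0^1\|S(t)x\|_{L^4}^p\,dt\bigr)^{1/p}\lesssim\bigl(\int_0^1 t^{-p/8}\,dt\bigr)^{1/p}\|x\|_H$, and the time integral diverges precisely at $p=8$. So the ``mild strengthening'' fails at the exponent you specify if one only tracks $x\in H$. There are two standard fixes, both available in your framework: (i) choose the H\"older split of $\int_{s^\star}^t(t-r)^{-(\sigma+1/2)}\|(Y+Z)(r)\|_{L^4}^2\,dr$ with exponents $(q',q)$, $q'\in\bigl(\tfrac{4}{3},\tfrac{2}{2\sigma+1}\bigr)$, which still converges since $\sigma<1/4$ and yields a $L^{2q}(L^4)$-norm with $2q<8$; or (ii) keep $p=8$ but track $\|Z_\epsilon(t-2)\|_{H^{2\sigma}}$ rather than $\|Z_\epsilon(t-2)\|_H$ in your event $\mathcal{E}_t$, for which Lemma~\ref{exp estimate Z} is exactly tailored, so that $\int_0^1\|S(t)x\|_{L^4}^8\,dt\lesssim\int_0^1 t^{-(1-8\sigma)}\|x\|_{H^{2\sigma}}^8\,dt<\infty$. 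With either fix your argument closes and constitutes a legitimate alternative to the paper's It\^o-based $I_5$ estimate.
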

\begin{proof}
	By the definition of mild solution, for arbitrary $ t \geq 0$
	\begin{align*}
		u_\epsilon ^0(t+1) &= \int_{0}^{t+1} S(t+1-s)D\left(u_\epsilon ^0(s)^2\right) \,ds + \sqrt{\epsilon }\int_{0}^{t+1} S(t+1-s)\sqrt{Q_{\epsilon} }\,dW(s) \\  &= S(1)u_\epsilon ^0(t) + \int_{t}^{t+1} S(t+1-s)D\left(u_\epsilon ^0(s)^2\right) \,ds + \sqrt{\epsilon }\int_{t}^{t+1} S(t+1-s)\sqrt{Q_{\epsilon } }\,dW(s).
	\end{align*}
	Thus, by Lemma 14.2.1 of \cite{da1996ergodicity}, for $2/p + \sigma < 1/4$ and positive constants $M_2$, $M_3$, $\bar{C}$
	\begin{align*}
		&\ \ \ \ \left\| A^\sigma  \int_{t}^{t+1} S(t+1-s)D\left(u_\epsilon ^0(s)^2\right) \,ds \right\Vert _H \\& \leq \bar{C}  \int_{t}^{t+1} (t+1-s)^{-(3/4 + \sigma )}\| u_\epsilon ^0(s)^2\Vert _{L^1} \,ds \\& \leq  \bar{C}\int_{t}^{t+1} (t+1-s)^{-(3/4 + \sigma) }\| 2Y_\epsilon ^0(s)^2 + 2Z_\epsilon (s)^2\Vert _{L^1} \,ds \\ & \leq 2\bar{C}\int_{t}^{t+1} (t+1-s)^{-(3/4 + \sigma) }\left(\| Y_\epsilon ^0(s)\Vert_H^2  + \| Z_\epsilon (s)\Vert _H^2 \right) \,ds \\ & \leq 2\bar{C}\int_{t}^{t+1} (t+1-s)^{-(3/4 + \sigma) } \,ds \sup_{0\leq s \leq 1}\|Y_\epsilon ^0(t+s)\Vert_H^2 \\&\ \ + 2\bar{C}\left[\int_{t}^{t+1} (t+1-s)^{-\frac{p}{p-2}(3/4 + \sigma) } \,ds \right]^{\frac{p-2}{p}} \left(\int_{t}^{t+1} \|Z_\epsilon (s)\Vert_H^p \,ds \right)^{2/p}\\ &\leq M_2 \sup_{0 \leq s \leq 1}\| Y(t+s)\Vert _H^2 + M_3 \left(\int_{t}^{t+1} \| Z_\epsilon (s)\Vert _H^p \,ds \right)^{2/p}.
	\end{align*}
	Therefore,
	\begin{align*}
		\left\| A^\sigma u_\epsilon ^0(t+1)\right\Vert _H & \leq \left\| A^\sigma S(1) u_\epsilon ^0(t)\right\Vert _H + \left\| A^\sigma  \int_{t}^{t+1} S(t+1-s)D\left(u_\epsilon ^0(s)^2\right) \,ds \right\Vert _H \\&\ \ \ \ + \left\| A^\sigma \sqrt{\epsilon }\int_{t}^{t+1} S(t+1-s)\sqrt{Q_{\epsilon } }\,dW(s) \right\Vert _H \\ &\leq M_1 \| u_\epsilon ^0(t)\Vert _H + M_2 \sup_{0 \leq s \leq 1}\| Y(t+s)\Vert _H^2 + M_3 \left(\int_{t}^{t+1} \| Z_\epsilon (s)\Vert _H^p \,ds \right)^{2/p} \\ &\ \ \ \ + \left\| Z_\epsilon (t+1) - S(1)Z_\epsilon (t)\right\Vert _{H^{2\sigma} },
	\end{align*}
	where $M_1$ is some positive constant.
	Then, we get 
	\begin{align*}
		\mathbb{P} \left(\left\| A^\sigma u_\epsilon ^0(t+1)\right\Vert _H \geq R_r\right) &\leq \mathbb{P} \left(M_1\| u_\epsilon ^0(t)\Vert _H \geq R_r/4\right) + \mathbb{P} \left(M_2\sup_{0 \leq s \leq 1}\| Y_\epsilon ^0(t+s)\Vert _H^2 \geq R_r/4\right) \\ &\ \ \ \ + \mathbb{P} \left(\|Z_\epsilon (t+1)\Vert _{H^{2\sigma }} \geq R_r/8 \right) + \mathbb{P} \left(\|Z_\epsilon (t)\Vert _{H^{2\sigma }} \geq R_r/8 \right)\\ &\ \ \ \ + \mathbb{P} \left(M_3 \left(\int_{t}^{t+1} \| Z_\epsilon (s)\Vert _H^p \,ds \right)^{2/p} \geq R_r/4 \right) \\ &=: I_1(t) + I_2(t) + I_3(t) + I_4(t) + I_5(t).
	\end{align*}
	According to Lemma \ref{exp esti u H}, $$\lim_{T \rightarrow \infty}\frac{1}{T}\int_{0}^{T} I_1(t) \,dt \leq \exp\left(-\frac{r}{\epsilon }\right).$$
	Moreover, by (\ref{Y estimate 1}), there exists $C>0$ such that
	\begin{align*}
		\sup_{0\leq s \leq 1} \| Y_\epsilon ^0(t+s)\Vert _H^2 &\leq \| Y_\epsilon^0 (t)\Vert _H^2\exp\left(2C\int_{t}^{t+1} \| Z_\epsilon (s) \Vert _{L^4}^{8/3} \,ds \right)\\ &\ \ \ \ + \frac{1}{2} \exp\left(2C\int_{t}^{t+1} \| Z_\epsilon (s) \Vert _{L^4}^{8/3} \,ds \right) \int_{t}^{t+1} \| Z_\epsilon (s) \Vert _{L^4}^4 \,ds.
	\end{align*}
	Then, 
	\begin{align*}
		I_2(t) &\leq 2\mathbb{P} \left(M_2\exp\left(2C\int_{t}^{t+1} \| Z_\epsilon (s) \Vert _{L^4}^{8/3} \,ds\right) \geq \sqrt{R_r/8}\right) + \mathbb{P} \left(\| Y_\epsilon ^0(t)\Vert _H^2 \geq \sqrt{R_r/8}\right) \\  &\ \ \ \ + \mathbb{P} \left(\frac{1}{2}\int_{t}^{t+1} \| Z_\epsilon (s) \Vert _{L^4}^4 \,ds \geq \sqrt{R_r/8}\right) \\ &=: 2I_{21}(t)+ I_{22}(t) + I_{23}(t).
	\end{align*}
	Since $Y_\epsilon ^0(t) = u_\epsilon ^0(t) - Z_\epsilon (t)$, we have 
\begin{align*}
	I_{22}(t) &= \mathbb{P} \left(\| Y_\epsilon ^0(t)\Vert _H^2 \geq \sqrt{R_r/8}\right) \\ &= \mathbb{P} \left(\| Y_\epsilon ^0(t)\Vert _H \geq (R_r/8)^{1/4}\right) \\ &\leq \mathbb{P} \left(\| u_\epsilon ^0(t)\Vert _H \geq \frac{(R_r/8)^{1/4}}{2}\right) + \mathbb{P} \left(\| Z_\epsilon (t)\Vert _H \geq \frac{(R_r/8)^{1/4}}{2}\right).
\end{align*}
According to Lemma \ref{exp estimate Z}, 
\begin{align*}
	\mathbb{P} \left(\| Z_\epsilon (t)\Vert _H \geq \frac{(R_r/8)^{1/4}}{2}\right) &\leq \mathbb{P} \left(\| Z_\epsilon (t)\Vert _{H^{2\sigma }} \geq \frac{(R_r/8)^{1/4}}{2}\right) \\ &\leq \exp\left(-\frac{r}{\epsilon }\right).
\end{align*}
Therefore,
$$\lim_{T \rightarrow \infty}\frac{1}{T}\int_{0}^{T} I_{22}(t) \,dt \leq \exp\left(-\frac{r}{\epsilon }\right).$$
Similarly by Lemma \ref{exp estimate Z}, 
$$I_3(t), I_4(t) \leq \exp\left(-\frac{r}{\epsilon }\right), \ t \geq 0.$$

Next, we will estimate $I_{21}$ and $I_{23}$. Since trivially $1+\left\| Z_\epsilon (s)\right\Vert _{L^4}^4 \geq \left\| Z_\epsilon (s)\right\Vert _{L^4}^{8/3}$ for any $s \geq 0$, we have
\begin{align*}
	I_{21}(t) &= \mathbb{P} \left(M_2\exp\left(2C\int_{t}^{t+1} \| Z_\epsilon (s) \Vert _{L^4}^{8/3} \,ds\right) \geq \sqrt{R_r/8}\right)\\ &= \mathbb{P} \left(\int_{t}^{t+1} \| Z_\epsilon (s) \Vert _{L^4}^{8/3} \,ds \geq \frac{\frac{1}{2}\ln R_r - \frac{1}{2}\ln 8 - \ln M_2}{2C}\right) \\ &\leq \mathbb{P} \left(\int_{t}^{t+1} \| Z_\epsilon (s) \Vert _{L^4}^4 \,ds \geq \frac{\frac{1}{2}\ln R_r - \frac{1}{2}\ln 8 - \ln M_2}{2C} -1\right).
\end{align*}
By Lemma \ref{exponential estimate 1}, there exist $\epsilon_r$ and $R_r^*>0$ such that for every $t\geq 0$
\begin{align*}
	\mathbb{P} \left(\int_{t}^{t+1} \| Z_\epsilon (s) \Vert _{L^4}^4 \,ds \geq {R_r^*}^4 \right)  &\leq \exp \left(-\frac{r}{\epsilon }\right), \ \epsilon \leq \epsilon_r.
\end{align*}
Thus, there exists $R_r>0$ such that for any $\epsilon \leq \epsilon_r$
$$I_{21}(t), I_{23}(t) \leq \exp\left(-\frac{r}{\epsilon }\right), \ t \geq 0.$$

Next, we need to show that $I_5(t) \leq \exp\left(-\frac{r}{\epsilon }\right)$. Since $\sigma < 1/4$, we can choose $p >8$ such that $2/p + \sigma < 1/4$. Then \begin{align*}
	I_5(t)= \mathbb{P} \left(\int_{t}^{t+1} \| Z_\epsilon (s)\Vert _H^p \,ds  \geq \left(\frac{R_r}{4M_3}\right)^{p/2} \right).
\end{align*}
Since the function 
$$ [0, +\infty )\ni R \longmapsto h(R) := \exp\left(\frac{(1+R)^{1/p}}{\epsilon }\right) \in [1, +\infty), $$
is convex when $\epsilon \leq 1/(p-1)$, and increasing, then for any $R_r >0$ we have by Jensen's inequality
\begin{align*}
	I_5(t) &= \mathbb{P} \left(h\left(\int_{t}^{t+1} \| Z_\epsilon (s)\Vert _H^p \,ds \right) \geq h\left(\left(\frac{R_r}{4M_3}\right)^{p/2}\right) \right) \\ &\leq \mathbb{P} \left(\int_{t}^{t+1} \exp\left(\frac{\left(1 + \| Z_\epsilon (s)\Vert _H^p\right)^{1/p}}{\epsilon }\right) \,ds \geq \exp\left(\frac{\left(1+ \left(\frac{R_r}{4M_3}\right)^{p/2}\right)^{1/p}}{\epsilon }\right)\right)\\ &\leq \exp\left(-\frac{\left(1+ \left(\frac{R_r}{4M_3}\right)^{p/2}\right)^{1/p}}{\epsilon }\right)\int_{t}^{t+1} \mathbb{E} F_\epsilon ^p\left(Z_\epsilon (s)\right) \,ds ,
\end{align*}
where $F_\epsilon ^p(x) = \exp\left(\frac{f^p(x)}{\epsilon }\right)$ and $f^p(x) = \left(1+ \| x\Vert _H^p\right)^{1/p}$. The function $f: H \rightarrow \mathbb{R} $ is twice differentiable and 
$$D_xf^p(x) = \frac{\| x\Vert_H ^{p-2}x}{\left(1+\| x\Vert_H^p \right)^{\frac{p-1}{p}}}, \ D_x^2f^p(x) = \frac{(p-2)\| x\Vert_H ^{p-4}x \otimes x + \| x\Vert _H^{p-2}I}{\left(1+\| x\Vert_H^p \right)^{\frac{p-1}{p}}} + \frac{(1-p)\| x\Vert_H ^{2p-4}x \otimes x}{\left(1+\| x\Vert_H^p \right)^{\frac{2p-1}{p}}}.$$
Thus, $$\left| D_xf^p(x)\right\vert _{L(H)} \leq 1 , \  \left| D_x^2f^p(x) \right\vert _{L(H \otimes H ;\mathbb{R} )} \leq 2p-2.$$ Furthermore, we also have that $$D_xF_\epsilon ^p = \frac{1}{\epsilon }F_\epsilon ^p(x)D_xf^p(x), \ D_x^2 F_\epsilon ^p(x) = \frac{1}{\epsilon ^2}F_\epsilon ^p(x)D_xf^p(x) \otimes  D_xf^p(x) + \frac{1}{\epsilon }F_\epsilon ^p(x)D_x^2f^p(x)I .$$
Define $F_\epsilon ^p(t,x) = \exp(t)F_\epsilon ^p(x)$. Note that $\sum_{k \in \mathbb{N} }\langle (z\otimes z)A^{\alpha}Q_{\delta (\epsilon )}e_k,e_k\rangle \leq \| z\Vert _{H^\alpha}^2$. By Itô's formula and integration by parts, it follows that
\begin{align*}
	\mathbb{E} F_\epsilon ^p\left(t, Z_\epsilon (t)\right) &= F_\epsilon^p (0,Z_\epsilon(0)) + \mathbb{E} \int_{0}^{t} \Bigg[D_t F_\epsilon^p (s,Z_\epsilon(s)) + \left\langle D_xF_\epsilon^p (s,Z_\epsilon (s)), \Delta Z_\epsilon (s) \right\rangle_H\\ &\ \ \ \ +\frac{\epsilon }{2}\sum_{k \in \mathbb{N} }\left\langle D_x^2F_\epsilon (s,Z_\epsilon (s))A^{\alpha}Q_{\delta (\epsilon )}e_k, e_k \right\rangle _H \Bigg] \,ds \\ &\leq \exp(\epsilon ^{-1}) + \mathbb{E} \int_{0}^{t} F_\epsilon ^p(s, Z_\epsilon (s))\left(\frac{\| Z_\epsilon (s)\Vert _H^{2p-4} \| Z_\epsilon (s)\Vert _{H^{\alpha}}^2 }{2\epsilon \left(1+ \| Z_\epsilon (s)\Vert _H^p\right)^{\frac{2p-2}{p}}} - \frac{\| Z_\epsilon (s)\Vert _H^{p-2}\| Z_\epsilon (s)\Vert _{H^1}^2}{\epsilon \left(1+\| Z_\epsilon (s)\Vert _H^p\right)^{\frac{p-1}{p}}} +1 \right)\\ &\ \ \ \ + F_\epsilon ^p(s,Z_\epsilon (s))(p-1)\sum_{k \in \mathbb{N} }(k\pi)^{2\alpha}\sigma _{\delta (\epsilon ),k}^2\,ds .
\end{align*}
Now using the Poincar\'e inequality and a trivial inequality $\frac{a^{p-2}}{(1+a^p)^{\frac{p-1}{p}}} \leq 1 $ for any $a \geq 0$, we have 
$$\frac{\| Z_\epsilon (s)\Vert _H^{2p-4} \| Z_\epsilon (s)\Vert _{H^{\alpha} }^2}{ \left(1+ \| Z_\epsilon (s)\Vert _H^p\right)^{\frac{2p-2}{p}}} \leq \frac{\| Z_\epsilon (s)\Vert _H^{p-2}\| Z_\epsilon (s)\Vert _{H^1}^2}{ \left(1+\| Z_\epsilon (s)\Vert _H^p\right)^{\frac{p-1}{p}}}.$$ Moreover, $\sum_{k \in \mathbb{N} }\sigma _{\delta (\epsilon ),k}^2 \leq C_\beta \delta (\epsilon )^{-\frac{1+2\alpha}{2\beta} },$ so we have 
$$\mathbb{E} F_\epsilon ^p\left(t, Z_\epsilon (t)\right) \leq \exp(\epsilon ^{-1}) + \mathbb{E} \int_{0}^{t} F_\epsilon ^p(s, Z_\epsilon (s))\left[(p-1)C_\beta \delta (\epsilon )^{-\frac{1+2\alpha}{2\beta} } - \frac{\| Z_\epsilon (s)\Vert _H^p}{2\epsilon\left(1+\| Z_\epsilon (s)\Vert _H^p\right)^{\frac{p-1}{p}} } + 1\right] \,ds.$$
By 
$$- \frac{\| Z_\epsilon (s)\Vert _H^p}{\left(1+\| Z_\epsilon (s)\Vert _H^p\right)^{\frac{p-1}{p}} } = \left(1+ \| Z_\epsilon \Vert _H^p\right)^{\frac{1-p}{p}} - \left(1+ \| Z_\epsilon \Vert _H^p\right)^{1/p} \leq 1 - f^p(Z_\epsilon (s)),$$ it follows that \begin{align*}
	\mathbb{E} F_\epsilon ^p\left(t, Z_\epsilon (t)\right) &\leq \exp(\epsilon ^{-1}) + \frac{1}{2}\mathbb{E} \int_{0}^{t} F_\epsilon ^p(s, Z_\epsilon (s))\left[-\frac{f^p(Z_\epsilon (s))}{\epsilon} + \frac{1}{\epsilon} + 2(p-1)C_\beta \delta (\epsilon )^{-\frac{1+2\alpha}{2\beta} } + 2\right] \,ds\\ &\leq \exp(\epsilon ^{-1}) + \frac{1}{2}\int_{0}^{t} \exp\left(\frac{1 + 2\epsilon (p-1)C_\beta \delta (\epsilon )^{-\frac{1+2\alpha}{2\beta} } + \epsilon }{\epsilon }\right)\exp(s) \,ds, 
\end{align*}
where the second inequality is due to the fact that $e^x(a-x) \leq \exp(a-1)$ for every $a>1$ and $x \geq 0$.
Thus,
$$\mathbb{E} F_\epsilon ^p\left(Z_\epsilon (t)\right) \leq \exp(-t + \epsilon ^{-1}) + \frac{1}{2}\exp\left(\frac{1 + 2\epsilon (p-1)C_\beta \delta (\epsilon )^{-\frac{1+2\alpha}{2\beta} } + \epsilon }{\epsilon }\right)$$ and 
\begin{align*}
    \int_{t}^{t+1} \mathbb{E} F_\epsilon ^p\left(Z_\epsilon (s)\right) \,ds &\leq \exp(\epsilon ^{-1}) + \frac{1}{2}\exp\left(\frac{1 + 2\epsilon (p-1)C_\beta \delta (\epsilon )^{-\frac{1+2\alpha}{2\beta} } + \epsilon }{\epsilon }\right) \\ &\leq \exp\left(\frac{1 + 2 (p-1)C_\beta \epsilon\delta (\epsilon )^{-\frac{1+2\alpha}{2\beta} } }{\epsilon }\right).
    \end{align*}

Therefore,$$I_5(t) \leq \exp\left(- \frac{r}{\epsilon }\right).$$

Finally, thanks to the ergodicity of $\nu _\epsilon $, we have \begin{align*}
	\nu _\epsilon  \left(B_{H^{2\sigma} }^c(0,R_r) \right) &= \lim_{T \rightarrow \infty } \frac{1}{T}\int_{0}^{T} \mathbb{P} \left(\left\| A^\sigma u_\epsilon ^0(t+1)\right\Vert _H \geq R_r\right) \,dt \\ &\leq \lim_{T \rightarrow \infty } \frac{1}{T}\int_{0}^{T} I_1(t) + I_{21}(t) + I_{22}(t) + I_{23}(t) + I_3(t) + I_4(t) + I_5(t)\,dt \\ &\leq 7\exp\left(-\frac{r}{\epsilon }\right),
\end{align*}
which implies the result.
\end{proof}

\subsection{Upper bound}
Let us recall that by Theorem \ref{U compactness}, the level set $\Phi^\alpha (r) = \left\{\phi  \in H: U^\alpha(\phi )\leq r\right\}$ is compact in H. To prove the upper bound for LDP, we need the following two key lemmas.
\begin{lemma}\label{upper bound lemma 1}
	Suppose $0 \leq \alpha< 1/2$, then for any $\delta >0$ and $r>0$, there exist $\lambda >0$ and $T>0$ such that for any $t \geq T$ and $z \in L^\infty([0,t];H)$,
	$$\| z(0)\Vert _H <\lambda , \ I_{t}(z) \leq r \Rightarrow dist_H \left(z(t), \Phi^\alpha (r)\right) <\delta.$$
\end{lemma}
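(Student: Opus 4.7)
The plan is to argue by contradiction. Suppose the lemma fails: then there exist $\delta,r>0$ together with sequences $\lambda_n\downarrow 0$, $t_n\uparrow\infty$, and paths $z_n\in L^\infty([0,t_n];H)$ with $\|z_n(0)\|_H<\lambda_n$, $I_{t_n}(z_n)\le r$, yet $\mathrm{dist}_H(z_n(t_n),\Phi^\alpha(r))\ge\delta$. By the definition of $I_{t_n}$ each $z_n$ is the mild solution of the skeleton equation (\ref{skt eq 2}) on $[0,t_n]$ driven by some $f_n\in L^2([0,t_n];H)$ with $\frac12\|f_n\|_{L^2([0,t_n];H)}^2=I_{t_n}(z_n)\le r$. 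Let $u_n$ denote the mild solution of the same skeleton equation driven by the same $f_n$ but starting from $0$; then $I_{t_n}(u_n)\le r$ as well.

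The first key step is to transfer the smallness of the initial data to smallness of the endpoints. Applying Proposition \ref{proposition x-y} with $x=0$, $y=z_n(0)$ and driving $f_n$ gives
\[
\sup_{0\le t\le t_n}\|z_n(t)-u_n(t)\|_H^{2}\le \lambda_n^{2}\exp\!\bigl(C[\lambda_n^{2}+2r]\bigr),
\]
which tends to $0$ uniformly as $n\to\infty$. Hence it suffices to show that, along a subsequence, $u_n(t_n)$ converges in $H$ to a point of $\Phi^\alpha(r)$. To that end, extend $u_n$ to a function $\bar u_n:(-\infty,0]\to H$ by setting $\bar u_n(s)=u_n(s+t_n)$ for $s\in[-t_n,0]$ and $\bar u_n(s)=0$ for $s\le -t_n$. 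Since $u_n(0)=0$, the extension is continuous in $H^{-\alpha}$, belongs to $\chi$, and satisfies $I_{-\infty}(\bar u_n)=I_{t_n}(u_n)\le r$.

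By Theorem \ref{level infty} the level set $\{I_{-\infty}\le r\}$ is compact in $\chi$, so along a subsequence $\bar u_n\to\bar u$ in $\chi$ for some $\bar u\in\chi$. In particular $\bar u_n(0)=u_n(t_n)\to\bar u(0)$ in $H^{-\alpha}$. To upgrade this to strong convergence in $H$, I would invoke Proposition \ref{u infty estimate}: the bound $\sup_{t\le 0}\|\bar u_n(t)\|_{H^{1-\alpha}}\le C_r$ derived there depends only on $\bar C\ge 2r+\sup_m\|\bar u_m(-T)\|_H^{2}$, and hence is uniform in $n$. Therefore $\{u_n(t_n)\}$ is bounded in $H^{1-\alpha}$, and by the compact embedding $H^{1-\alpha}\hookrightarrow H$ we may extract a subsequence converging in $H$; by uniqueness of the limit in $H^{-\alpha}$ the limit must be $\bar u(0)$, so $u_n(t_n)\to\bar u(0)$ in $H$. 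Combined with the estimate of the first step we conclude $z_n(t_n)\to\bar u(0)$ in $H$. Finally Theorem \ref{characterization of U} gives $U^\alpha(\bar u(0))\le I_{-\infty}(\bar u)\le\liminf_n I_{-\infty}(\bar u_n)\le r$, so $\bar u(0)\in\Phi^\alpha(r)$, contradicting $\mathrm{dist}_H(z_n(t_n),\Phi^\alpha(r))\ge\delta$.

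The main obstacle is the passage from $H^{-\alpha}$-convergence of the endpoints (which is all that membership in $\chi$ delivers) to $H$-convergence, since $\Phi^\alpha(r)$ is a subset of $H$ and the distance bound we need to contradict lives there. The argument above bypasses this by exploiting the $n$-uniform $H^{1-\alpha}$ control supplied by Proposition \ref{u infty estimate}; verifying that the constant $\bar C$ in that proposition can be chosen to depend only on $r$ (not on the individual path) is the point that must be checked carefully. The remaining ingredients, namely the contraction estimate of Proposition \ref{proposition x-y}, the compactness Theorem \ref{level infty}, and the identification Theorem \ref{characterization of U}, are then applied in a direct manner.
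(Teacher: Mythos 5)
Your proof is correct, and it takes a genuinely different route from the paper's. The paper's argument works directly with the paths $z_n$: it invokes the pigeonhole argument from the proof of Proposition \ref{u infty estimate} to find intermediate times $t_n$ with $T_n - t_n$ large and $\|z_n(t_n)\|_{H^{1-\alpha}}$ small, then applies the path-surgery Lemmas \ref{lemma 2} and \ref{lemma 3} to conclude $U^\alpha(z_n(T_n)) \leq r + \varepsilon$ for large $n$, extracts an $H$-convergent subsequence of endpoints directly from the compactness of $\Phi^\alpha(r+\varepsilon)$ (Theorem \ref{U compactness}), and finishes with the lower semi-continuity of $U^\alpha$. You instead first reduce to zero initial data via Proposition \ref{proposition x-y}, extend the resulting solutions backward by zero to obtain elements of $\chi$, invoke compactness of the level sets of $I_{-\infty}$ (Theorem \ref{level infty}) -- which lives only in the weak topology of uniform-on-compacts $H^{-\alpha}$-convergence -- and then upgrade the endpoint convergence from $H^{-\alpha}$ to $H$ by proving the $H^{1-\alpha}$ bound from Proposition \ref{u infty estimate} is uniform in $n$ (which it is: with zero data at $s=-t_n$, the estimate after (\ref{u skt estimate}) gives $\|\bar u_n(t)\|_{H^{1-\alpha}}^2 \leq 2Cr\,\exp(4Cr^2)$, depending only on $r$) and combining with the compact embedding $H^{1-\alpha}\hookrightarrow H$. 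Both routes rest on the same core ingredients (energy estimates, level-set compactness, lower semi-continuity, Theorem \ref{characterization of U}), but yours trades the quasi-potential-level compactness for path-space compactness plus a bootstrapping step on endpoints, while the paper sidesteps the topology-upgrade issue by bounding $U^\alpha$ at the endpoints directly. Both are valid; the paper's is more economical, yours is more explicit about where the uniform constants come from.
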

\begin{proof}
Assume the claim does not hold. Then, there exist $\delta>0$ and $r>0$ such that for every $n \in \mathbb{N} $ there exists a function $z\in L^\infty([0,T_n];H)$ with $T_n \nearrow +\infty$, $\lambda_n := \|z_n(0)\Vert_H\searrow 0$ and
\begin{equation}\label{ineq 1 upper lemma 1}
    I_{T_n}(z_n) \leq r \ , dist_H(z_n(T_n), \Phi^\alpha(r)) \geq \delta.
\end{equation}
By the arguments in Proposition \ref{u infty estimate}, for any $\eta >0$, there exists $N \in \mathbb{N}$ such that for any $n \geq N$ we can find $t_n \geq 0$ with $T_n - t_n$ sufficiently large and satisfies $\|z_n(t_n)\Vert_{H^{1-\alpha}} < \eta$. According to arguments in Lemma \ref{lemma 2} and Lemma \ref{lemma 3}, this implies that for any $\varepsilon >0$, there exits $N \in \mathbb{N}$ such that for any $n \geq N$,
$$U^\alpha(z_n(T_n)) \leq I_{T_n}(z_n)+ \varepsilon \leq r + \varepsilon.$$
Since by Theorem \ref{U compactness}, level sets of $U^\alpha$ is compact in $H$, there would exists a subsequence $(n_k)_{k \in \mathbb{N}}$ and $\zeta \in H $ such that
$$\lim_{k \rightarrow \infty}\|z_{n_k}(T_{n_k})-\zeta\Vert_H = 0.$$
Then, by the lower semi-continuity of $U^\alpha$,
$$U^\alpha(\zeta) \leq  \liminf_{k \rightarrow \infty}U^\alpha(z_{n_k}(T_{n_k})) \leq r,$$ which contradicts (\ref{ineq 1 upper lemma 1}).
\end{proof}

\begin{lemma}\label{upper bound lemma 2}
	For any $r >0$, $\delta >0$ and $\rho >0$, let $\lambda $ be as in Lemma \ref{upper bound lemma 1}. Then there exists $\bar{N} \in \mathbb{N} $ large enough such that 
	$$u \in H_{\rho ,r,\delta }(\bar{N})\  \text{implies}\  I_{\bar{N}}(u) \geq r,$$
	where the set $H_{\rho ,r,\delta }(\bar{N})$ is defined for $\bar{N} \in \mathbb{N}$ by
	$$H_{\rho ,r,\delta }(\bar{N}):= \left\{u \in L^\infty([0,\bar{N}];H), \| u(0)\Vert_H \leq \rho , \| u(j)\Vert _H \geq \lambda , j =1,...,\bar{N}\right\}.$$
\end{lemma}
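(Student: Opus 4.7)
My plan is to argue by a direct energy estimate for the skeleton equation, turning the constraint ``$\|u(j)\|_H \geq \lambda$ at every integer'' into a quantitative lower bound on the $L^2$ norm of the forcing, and hence on $I_{\bar N}(u)$. If $I_{\bar N}(u)=+\infty$ the conclusion is trivial, so I would assume $I_{\bar N}(u)<\infty$. Then $f:=A^{-\alpha/2}\mathcal H(u)\in L^2([0,\bar N];H)$, the function $u$ is a mild solution of the skeleton equation \eqref{skt eq 2} with forcing $f$, and $\|f\|_{L^2([0,\bar N];H)}^2 = 2 I_{\bar N}(u)$.

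The first step is to recall the basic energy inequality \eqref{u Gronwall 1} from Proposition \ref{skt estimate}, namely $\tfrac{d}{dt}\|u(t)\|_H^2 + \|u(t)\|_{H^1}^2 \leq \|f(t)\|_H^2$, and upgrade it via the Poincaré inequality $\|u\|_{H^1}^2 \geq \pi^2\|u\|_H^2$ to
$$\frac{d}{dt}\|u(t)\|_H^2 + \pi^2 \|u(t)\|_H^2 \leq \|f(t)\|_H^2.$$
Integrating this on $[j-1,j]$ and setting $\kappa:=e^{-\pi^2}<1$, $a_j:=\|u(j)\|_H^2$, $b_j:=\int_{j-1}^{j}\|f(s)\|_H^2\,ds$ yields the linear recursion $a_j \leq \kappa\, a_{j-1} + b_j$, hence by induction
$$a_j \;\leq\; \kappa^{j} a_0 \;+\; \sum_{i=1}^{j}\kappa^{j-i}\,b_i.$$

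The second step is to combine this recursion with both hypotheses defining $H_{\rho,r,\delta}(\bar N)$. Using $a_0 \leq \rho^2$ and $a_j \geq \lambda^2$ for $j=1,\dots,\bar N$, then summing over $j$ and exchanging the order of summation with the geometric bound $\sum_{j=0}^{\infty}\kappa^{j}=1/(1-\kappa)$, I obtain
$$\bar N \lambda^2 \;\leq\; \sum_{j=1}^{\bar N} a_j \;\leq\; \frac{\kappa}{1-\kappa}\,\rho^2 \;+\; \frac{1}{1-\kappa}\sum_{i=1}^{\bar N} b_i \;=\; \frac{\kappa\,\rho^2}{1-\kappa} + \frac{2\,I_{\bar N}(u)}{1-\kappa}.$$
Rearranging gives the key lower bound
$$I_{\bar N}(u) \;\geq\; \frac{1-\kappa}{2}\,\bar N\lambda^2 \;-\; \frac{\kappa\,\rho^2}{2},$$
so it suffices to choose any integer $\bar N \geq (2r + \kappa\rho^2)/((1-\kappa)\lambda^2)$; note that this $\bar N$ depends only on $r,\rho,\lambda$ (and $\lambda$ was fixed in Lemma \ref{upper bound lemma 1} in terms of $r$ and $\delta$), as required.

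There is no real obstacle here: the nonlinear term in the Burgers drift drops out of the $H$-energy identity thanks to the Dirichlet boundary condition (as already used in Proposition \ref{skt estimate}), so the nonlinearity does not enter the estimate at all. The only point that deserves care is that the bound $\sum_j a_j \leq \frac{\kappa \rho^2}{1-\kappa}+\frac{1}{1-\kappa}\sum_i b_i$ is uniform in $\bar N$ on the $\rho$-term, which is what makes the left-hand side $\bar N\lambda^2$ beat the constant $\frac{\kappa\rho^2}{1-\kappa}$ for $\bar N$ large and produces the arbitrarily large lower bound on $I_{\bar N}(u)$.
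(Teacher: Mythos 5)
Your proof is correct and follows essentially the same route as the paper: both start from the energy inequality \eqref{u Gronwall 1} (with the nonlinearity dropping out by the boundary condition), apply Gronwall on unit intervals to get a discrete recursion $a_{j+1}\leq \kappa a_j + b_j$ with a contraction factor $\kappa<1$, and then combine $a_j\geq\lambda^2$ for $j\geq 1$ with $a_0\leq\rho^2$ to force $\sum b_i = 2I_{\bar N}(u)$ to grow linearly in $\bar N$. The only differences are cosmetic: you iterate the recursion and swap sums (vs. the paper summing the one-step inequality directly), and you keep the sharper Poincaré constant $\kappa=e^{-\pi^2}$ where the paper uses $e^{-1}$; neither changes the argument.
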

\begin{proof}
	According to (\ref{u Gronwall 1}) in Proposition \ref{skt estimate} we have $$\frac{d}{d t}\| u(t)\Vert _H^2  \leq -\| u(t)\Vert _H^2+\| f(t)\Vert _H^2.$$By the Gronwall inequality
 \begin{equation}\label{upper bound: gronwall}
     \| u(T)\Vert _H^2 \leq \int_{t}^{T} \| f(s)\Vert _H^2 \,ds + e^{-(T-t)}\| u(t)\Vert _H^2. \end{equation}
	Denote
	$$a_i = \| u(i)\Vert _H^2, \ b_i = \int_{i}^{i+1} \| f(s)\Vert _H^2 \,ds, i =0,1,...,\bar{N}-1.$$ Then by (\ref{upper bound: gronwall}), we have 
	$$e^{-1}a_i+b_i \geq a_{i+1}, \ i =0,1,...,\bar{N}-1.$$ Thus, $$e^{-1}\sum_{i =0}^{\bar{N}-1}a_i + \sum_{i =0}^{\bar{N}-1}b_i \geq \sum_{i =1}^{\bar{N}}a_i.$$ Notice that $2 I_{\bar{N}}(u) = \sum_{i =0}^{\bar{N}-1}b_i$. Therefore, if $u \in H_{\rho ,r,\delta }(\bar{N})$, then 
 \begin{align*}
		 2I_{\bar{N}}(u) &\geq \sum_{i =1}^{\bar{N}}a_i - e^{-1}\sum_{i =0}^{\bar{N}-1}a_i \geq (1-e^{-1})(\bar{N}-1)\lambda ^2 + \lambda ^2 - e^{-1}\rho ^2.
	\end{align*}
	Now for any $r,\delta >0$, we can choose $\bar{N}$ large enough such that
	\begin{align*}
		\frac{1}{2}\left[(1-e^{-1})(\bar{N}-1)\lambda ^2 + \lambda ^2 - e^{-1}\rho ^2\right] \geq r,
	\end{align*} where $\lambda $ depends on $r$ and $\delta $. The result is proved.
\end{proof}

With the help of Lemma \ref{exponential estimate 2}, Lemma \ref{upper bound lemma 1} and Lemma \ref{upper bound lemma 2}, we obtain the upper bound. 
\begin{proposition}
	Under Assumption \ref{assumption 2}, the family of the invariant measures $\left\{\nu _\epsilon \right\}_{\epsilon >0}$ of Eq. (\ref{sbe2})) satisfies a large deviations principle upper bound in $H$ with the rate function $U^\alpha$. That is, for any $r \geq 0$ and $\delta , \gamma  >0$, there exists $\epsilon _0>0$ such that 
$$\nu _\epsilon \left(\left\{h \in H: dist_H(h,\Phi^\alpha (r)) >\delta \right\}\right) \leq \exp\left(-\frac{r-\gamma }{\epsilon }\right), \ \epsilon \leq \epsilon _0.$$
\end{proposition}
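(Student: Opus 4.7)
The plan is to combine the invariance of $\nu_\epsilon$ with the Dembo--Zeitouni ULDP from Corollary~\ref{corollary DZ}, using the exponential concentration of $\nu_\epsilon$ on a compact subset of $H$ (Lemma~\ref{exponential estimate 2}) to localize the starting point, and then using Lemmas~\ref{upper bound lemma 1} and \ref{upper bound lemma 2} to argue that any path driving the system from $K_r$ to a point far from $\Phi^\alpha(r)$ must carry large rate.

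Fix $r\geq 0$ and $\delta,\gamma>0$. First I would invoke Lemma~\ref{upper bound lemma 1} for these $r,\delta$ to produce $\lambda,T_1>0$; Lemma~\ref{exponential estimate 2} to produce $R_r$ and the set $K_r:=\overline{B_{H^{2\sigma}}(0,R_r)}$, which is compact in $H$ by the compact Sobolev embedding; and Lemma~\ref{upper bound lemma 2} with $\rho:=\sup_{y\in K_r}\|y\|_H$ to produce an integer $\bar N$. Set the terminal time $T^\ast:=\bar N+T_1$ and $F_\delta:=\{h\in H:\ dist_H(h,\Phi^\alpha(r))\geq\delta\}$. By invariance,
\begin{equation*}
\nu_\epsilon\bigl(\{h:\ dist_H(h,\Phi^\alpha(r))>\delta\}\bigr)\ \leq\ \sup_{x\in K_r}\mathbb{P}\bigl(u_\epsilon^x(T^\ast)\in F_\delta\bigr)\ +\ \nu_\epsilon(K_r^c),
\end{equation*}
and the second term is at most $\exp(-r/\epsilon)$ by Lemma~\ref{exponential estimate 2} for $\epsilon$ small.

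The core step is to show $\inf\{I_{T^\ast}(u):\ u(0)=y,\ u(T^\ast)\in F_\delta\}\geq r$ for every $y\in K_r$, which I would establish by contradiction. Assume $I_{T^\ast}(u)<r$. Either (a) $\|u(j)\|_H\geq\lambda$ for all $j=1,\ldots,\bar N$, in which case the restriction $u|_{[0,\bar N]}$ lies in $H_{\rho,r,\delta}(\bar N)$ and Lemma~\ref{upper bound lemma 2} forces $I_{\bar N}(u)\geq r$, contradicting $I_{T^\ast}(u)<r$; or (b) there exists some $j\in\{1,\ldots,\bar N\}$ with $\|u(j)\|_H<\lambda$, in which case the shifted path $z(s):=u(j+s)$ on $[0,T^\ast-j]$ satisfies $T^\ast-j\geq T_1$, $\|z(0)\|_H<\lambda$ and $I_{T^\ast-j}(z)\leq I_{T^\ast}(u)<r$, so Lemma~\ref{upper bound lemma 1} yields $dist_H(u(T^\ast),\Phi^\alpha(r))<\delta$, again a contradiction. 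Since $K_r$ is compact in $H$ and $\{u\in L^\infty([0,T^\ast];H):\ u(T^\ast)\in F_\delta\}$ is closed, the DZULDP upper bound of Corollary~\ref{corollary DZ} applied to this closed set with the lower bound $r$ on the rate yields
\begin{equation*}
\sup_{x\in K_r}\mathbb{P}\bigl(u_\epsilon^x(T^\ast)\in F_\delta\bigr)\ \leq\ \exp\bigl(-(r-\gamma/2)/\epsilon\bigr)
\end{equation*}
for $\epsilon$ sufficiently small. Combining with the tail bound on $\nu_\epsilon(K_r^c)$ gives $\nu_\epsilon(\cdots)\leq 2\exp(-(r-\gamma/2)/\epsilon)\leq \exp(-(r-\gamma)/\epsilon)$ for $\epsilon$ small enough, which is the desired bound.

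The main obstacle was producing the exponential tightness of $\nu_\epsilon$ on a compact subset of $H$, which is precisely the content of Lemma~\ref{exponential estimate 2} and required the delicate splitting and Gaussian-type control of $Z_\epsilon$ carried out in Section~\ref{sec:exp esti}. Given that input, the argument above is a structural Sowers-type decomposition that presents no further substantive difficulty.
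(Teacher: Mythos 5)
Your proof is correct, and it takes a genuinely cleaner route than the paper's. The paper follows the classical Sowers-type decomposition: after using invariance and Lemma~\ref{exponential estimate 2} to localize the initial condition to $K_r=\overline{B_{H^{2\sigma}}(0,R_r)}$, it splits the path-space event into $K_2$ (the path stays outside $B_H(0,\lambda)$ at all integer times $j\le N$, so it lies in $H_{R_r,r,\delta}(N)$) and $K_3$ (the path enters $B_H(0,\lambda)$ at some integer time $j$). For $K_2$ the paper applies the DZULDP over the compact $K_r$ together with Lemma~\ref{upper bound lemma 2}; for $K_3$ it uses the Markov property to restart the process from $B_H(0,\lambda)$, takes a union bound over $j$, and applies the FWULDP of Theorem~\ref{LDPu} over the bounded ball $B_H(0,\lambda)$ together with Lemma~\ref{upper bound lemma 1}. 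You instead collapse $K_2$ and $K_3$ into a single application of the DZULDP over $K_r$ to the closed set $F=\{u: u(T^\ast)\in F_\delta\}$ with $T^\ast=\bar N+T_1$, and prove directly, by the case split on whether the trajectory ever enters $B_H(0,\lambda)$ at an integer time, that $\inf_{y\in K_r}\inf_{u\in F}I^y(u)\ge r$. This removes the Markov-property restart, the union bound, and the need to invoke the FWULDP separately on $B_H(0,\lambda)$; only the DZULDP over compacts is required. The two approaches use exactly the same analytic inputs (Lemmas~\ref{exponential estimate 2}, \ref{upper bound lemma 1}, \ref{upper bound lemma 2} and the ULDP for paths), so neither is more general, but your argument is tighter. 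The only minor imprecision, shared with the paper, is that the set $\{u: u(T^\ast)\in F_\delta\}$ involves a pointwise evaluation and is closed only when one works in $C([0,T^\ast];H)$ rather than $L^\infty([0,T^\ast];H)$; since the laws and the finite-rate trajectories are all supported on continuous paths this is harmless, and the paper has the identical issue with $H_{\rho,r,\delta}(\bar N)$.
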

\begin{proof}
	For any fixed $r>0$, $\delta >0$ and $\gamma >0$ and let $R_r$ and $\epsilon _r$ be as in Lemma \ref{exponential estimate 2}. Since $\nu _\epsilon $ is the invariant measure for Eq. (\ref{sbe2}), for $t \geq  N$ we have 
	\begin{align*}
		\nu _\epsilon \left(\left\{h \in H: dist_H(h,\Phi (r)) \geq \delta \right\}\right) &= \int_{H}\mathbb{P} \left(dist_H(u_\epsilon ^y(t),\Phi (r)) \geq \delta \right)  \,d\nu _\epsilon (y) \\ &= \int_{B_{H^{2\sigma} }^c(0,R_r)}\mathbb{P} \left(dist_H(u_\epsilon ^y(t),\Phi (r)) \geq \delta \right)  \,d\nu _\epsilon (y) \\ &\ \ \ \ + \int_{B_{H^{2\sigma} }(0,R_r)}\mathbb{P} \left(dist_H(u_\epsilon ^y(t),\Phi (r)) \geq \delta, u_\epsilon ^y \in H_{R_r,r,\delta }(N) \right)  \,d\nu _\epsilon (y) \\  &\ \ \ \ + \int_{B_{H^{2\sigma} }(0,R_r)}\mathbb{P} \left(dist_H(u_\epsilon ^y(t),\Phi (r)) \geq \delta, u_\epsilon ^y \notin H_{R_r,r,\delta }(N) \right)  \,d\nu _\epsilon (y) \\ &=: K_1 +K_2 + K_3,
	\end{align*}
	where $N$ is chosen as in Lemma \ref{upper bound lemma 2} such that $$u \in H_{R_r,r,\delta }(N) \ \text{implies} \ I_N(u) \geq r.$$
	Now, thanks to Lemma \ref{exponential estimate 2} we have 
	$$K_1 \leq \nu _\epsilon \left(B^c_{H^{2\sigma} }(0,R_r)\right) \leq \exp(-\frac{r}{\epsilon }), \ \epsilon \leq \epsilon _r.$$ Notice that $\overline{B_{H^{2\sigma} }(0,R_r) }$ is a compact subset of H. Since $H_{R_r,r,\delta }(N)$ is a closed set in $C([0,N];H)$, and since by Corollary \ref{corollary DZ} $\left\{\mathcal{L} (u_\epsilon^x)\right\}_{\epsilon > 0}$ satisfies a Dembo-Zeitouni uniform large deviations principle over compact sets, we infer that there exists $\epsilon _1 >0$ such that \begin{align*}
		K_2 &\leq \sup_{y \in \overline{B_{H^{2\sigma} }(0,R_r) }}\mathbb{P} \left( u_\epsilon ^y \in H_{R_r,r,\delta }(N) \right) \\ &\leq \exp\left(-\frac{1}{\epsilon }\left[\inf_{z \in \overline{B_{H^{2\sigma} }(0,R_r) }}\inf_{h \in H_{R_r,r,\delta }(N)}I_N^z(h) - \gamma \right]\right)\\ &\leq \exp\left(-\frac{r-\gamma }{\epsilon }\right),
	\end{align*}
	for any $\epsilon \leq \epsilon _1$. Here, the third inequality is due to Lemma \ref{upper bound lemma 2}. 

	Finally, let us deal with the third term $K_3$. By the definition of $H_{R_r,r,\delta }(N)$ and the Markov property of $u_\epsilon $, we have 
	\begin{align*}
		K_3 &= \int_{B_{H^{2\sigma} }(0,R_r)}\mathbb{P} \left(\bigcup _{j=1}^N\left\{\| u_\epsilon ^y(j)\Vert _H < \lambda \right\}\bigcap \left\{dist_H(u_\epsilon^y(t), \Phi^\alpha (r)) \geq \delta \right\}\right) \,d\nu _\epsilon (y) \\ &\leq \sum_{j =1}^N \int_{B_{H^{2\sigma} }(0,R_r)}\mathbb{P} \left(\left\{\| u_\epsilon ^y(j)\Vert _H < \lambda \right\}\bigcap \left\{dist_H(u_\epsilon^y(t), \Phi^\alpha (r)) \geq \delta \right\}\right) \,d\nu _\epsilon (y) \\ &\leq \sum_{j=1}^N \sup_{y \in B_H(0,\lambda )}\mathbb{P} \left(dist_H(u_\epsilon ^y(t-j),\Phi^\alpha (r)) \geq \delta \right).
	\end{align*}

	In order to use the ULDP, we need to transfer the event at $t-j$ to an event in $C([0,t-j];H)$. To achieve that, we choose $t \geq T + N$, we then have that for $y \in B_H(0,\lambda )$, if
	$$dist_{L^\infty([0,t-j];H)}\left(u_\epsilon^y, \Psi ^y(r)\right) < \frac{\delta }{2}$$ then $$ \inf\left\{\| u_\epsilon^y-v\Vert _{L^\infty([0,t-j];H)}: v \in L^\infty([0,t-j];H),\  \| v(0)\Vert _H <\lambda , \ I_{t-j}(v) \leq r \right\} < \frac{\delta }{2},$$ where $$\Psi ^y(r) := \left\{v \in L^\infty([0,t-j];H): v(0) =y, I_{t-j}(v) \leq r\right\}.$$ It turns out by applying Lemma \ref{upper bound lemma 1} for $\frac{\delta}{2}$ that $$ dist_H(u_\epsilon^y(t-j), \Phi^\alpha (r)) <\delta.$$ 
Thus, by Theorem \ref{LDPu} and our discussions on the Assumption \ref{assumption 1} about the trace class case in Section \ref{trace class}, there exists $\epsilon _{0,j}$ such that for any $\epsilon \leq \epsilon _{0,j}$,
\begin{align*}
	\sup_{y \in B_H(0,\lambda )}\mathbb{P} \left(dist_H(u_\epsilon ^y(t-j),\Phi^\alpha (r)) \geq \delta \right) & \leq \sup_{y \in B_H(0,\lambda )}\mathbb{P} \left(dist_{L^\infty([0,t-j];H)}\left(u_\epsilon^y, \Psi ^y(r)\right) \geq \frac{\delta }{2}\right) \\ &\leq \exp\left(-\frac{r-\gamma }{\epsilon }\right).
\end{align*}
Therefore, we choose $\epsilon_0= \min(\epsilon _r,\epsilon _1,\epsilon _{0,1},...,\epsilon _{0,N})$, it follows that for any $\epsilon \leq \epsilon _0$
$$K_3 \leq N \exp\left(-\frac{r-\gamma }{\epsilon }\right),$$
Combining the estimates of $K_1$, $K_2$, $K_3$, we complete the proof of this proposition.
\end{proof}

\begin{appendices}
    \section{Besov Space}
    Let us recall the definition of Besov space. Set $D = (0,1)$, for any $l \in \mathbb{N}$ and tempered distribution $u \in \mathcal{S}^\prime(D) $, we define $$\Delta_l u = \sum_{2^{l-1}<|k| \leq 2^l} \langle u, e^{i2k\pi\xi}\rangle e^{i2k\pi\xi}.$$ For $\alpha \in \mathbb{R}$, $p \geq 1$ and $q \geq 1$, we define 
    $$B_{p,q}^\alpha = \left\{u \in \mathcal{S}^\prime(D):\|u\Vert_{B_{p,q}^\alpha} := \left(\sum_{l \in \mathbb{N}}\left( 2^{l\alpha} \| \Delta_l u\Vert_{L^p(D)}\right)^q \right)^{1/q} < + \infty\right\},$$
    with the usual interpretation as $l^\infty$ norm in case $q = \infty$. Then, $B_{p,q}^\alpha$ is a Banach space and $\|u\Vert_{B_{2,2}^\alpha} \simeq \|u\Vert_{H^\alpha}$. Moreover, for $\alpha \in (0,1)$ we have $B_{\infty,\infty}^\alpha = C^\alpha$, the $\alpha$-Holder space, and $\|u\Vert_{B_{\infty,\infty}^\alpha} = \|u\Vert_{C^\alpha}$. We recall the following Besov embedding theorem (see Proposition 2.71, \cite{bahouri2011fourier} ).
    \begin{lemma}\label{lemma:A1}
        Let $1 \leq p_1 \leq p_2 \leq \infty$ and $1\leq q_1 \leq q_2 \leq \infty $. Then, for any $\alpha \in \mathbb{R}$, the space $B_{p_1,q_1}^\alpha$ is continuous embedded into $B_{p_2,q_2}^{\alpha-d\left(\frac{1}{p_1}-\frac{1}{p_2}\right )}$.
    \end{lemma}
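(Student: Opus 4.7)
The plan is to reduce the claim to the classical Bernstein inequality together with the trivial monotonicity of discrete $\ell^q$ spaces. First I would invoke Bernstein's inequality on the torus: if a tempered distribution $f$ has Fourier spectrum supported in the annulus $\{2^{l-1}<|k|\leq 2^l\}$, then for $1\leq p_1\leq p_2\leq\infty$,
$$\|f\|_{L^{p_2}(D)}\leq C\,2^{ld\left(\frac{1}{p_1}-\frac{1}{p_2}\right)}\|f\|_{L^{p_1}(D)},$$
with a constant $C$ independent of $l$. Applying this to $f=\Delta_l u$ and multiplying both sides by $2^{l(\alpha-d(1/p_1-1/p_2))}$ one obtains
$$2^{l\left(\alpha-d\left(\frac{1}{p_1}-\frac{1}{p_2}\right)\right)}\|\Delta_l u\|_{L^{p_2}}\leq C\,2^{l\alpha}\|\Delta_l u\|_{L^{p_1}}.$$

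Next I would take $\ell^{q_2}(\mathbb{N})$-norm of both sides in $l$, which directly gives
$$\|u\|_{B_{p_2,q_2}^{\alpha-d(1/p_1-1/p_2)}}\leq C\|u\|_{B_{p_1,q_2}^\alpha}.$$
Finally, since $q_1\leq q_2$, the continuous embedding $\ell^{q_1}\hookrightarrow \ell^{q_2}$ for sequences of non-negative numbers (a consequence of $\|a\|_{\ell^{q_2}}\leq\|a\|_{\ell^{q_1}}$) yields $\|u\|_{B_{p_1,q_2}^\alpha}\leq \|u\|_{B_{p_1,q_1}^\alpha}$, and composing these two inequalities produces the desired continuous embedding.

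The only non-trivial ingredient is the Bernstein inequality itself; the rest is bookkeeping. To establish it, I would write $\Delta_l u = K_l\ast u$ where $K_l$ is the Littlewood--Paley kernel whose Fourier support lies in the annulus of scale $2^l$. A scaling argument shows $\|K_l\|_{L^r}\leq C\,2^{ld(1-1/r)}$ for every $r\in[1,\infty]$, and Young's convolution inequality with $1+1/p_2=1/r+1/p_1$ (so $1/r=1-(1/p_1-1/p_2)$) then delivers the required bound. This is the main technical point; once it is in place the Besov embedding follows by the two-line argument above. Since the result is exactly Proposition~2.71 of \cite{bahouri2011fourier}, in practice I would simply cite it rather than repeat the proof.
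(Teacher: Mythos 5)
Your argument is correct and is exactly the standard proof of this embedding: Bernstein's inequality for spectrally localized functions (proved via the scaling $\|K_l\|_{L^r}\lesssim 2^{ld(1-1/r)}$ of the Littlewood--Paley kernel plus Young's convolution inequality), followed by the monotonicity $\ell^{q_1}\hookrightarrow\ell^{q_2}$. The paper gives no proof of this lemma at all --- it simply cites Proposition~2.71 of Bahouri--Chemin--Danchin --- and your sketch is the argument found there, so there is nothing to compare beyond noting that you have correctly reproduced the cited reference's reasoning.
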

    By Chapter 4 of \cite{triebel1995interpolation}, one can extend the multiplication on suitable Besov space and also have the duality properties of Besov space.
    \begin{lemma}\label{lemma:A2} (1) Let $\alpha, \beta \in \mathbb{R}$ and 
    $p,p_1,p_2,q \in [0, \infty]$ satisfy
    $$\frac{1}{p_1} + \frac{1}{p_2}=\frac{1}{p} .$$ Then, the bilinear map $(u;v) \mapsto uv$  extends to a continuous map from $B_{p_1,q}^\alpha \times B_{p_2,q}^\beta $ to $B_{p,q}^ {\alpha\wedge \beta}$, for any $\alpha + \beta >0$.

    (2) Let $\alpha \in (0,1)$, $p,q \in [1,\infty]$, $p^\prime$ and $q^\prime $ be their conjugate exponents, respectively. Then, the mapping $(u;v) \mapsto \int uv \,d \xi $ extends to a continuous bilinear form on $B_{p,q}^\alpha \times B_{p^\prime ,q^\prime }^{-\alpha}$.        
    \end{lemma}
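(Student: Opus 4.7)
The final statement (Lemma~\ref{lemma:A2}) consists of two standard Besov-space assertions: a paraproduct estimate and a duality pairing. My overall strategy is to apply Bony's paraproduct decomposition together with H\"older's inequality (in both the spatial and sequence variables) and the Bernstein-type inequalities satisfied by the Littlewood--Paley blocks $\Delta_l$ defined in the excerpt.

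For part (1), I would write $uv = T_u v + T_v u + R(u,v)$, where $T_u v := \sum_l S_{l-1}u \cdot \Delta_l v$ (with $S_{l-1} := \sum_{k \le l-2}\Delta_k$) is the paraproduct and $R(u,v) := \sum_{|l-k|\le 1} \Delta_l u \cdot \Delta_k v$ is the resonance term. Each dyadic block $\Delta_j(T_u v)$ is essentially supported only for $j$ close to $l$, so H\"older with $1/p_1 + 1/p_2 = 1/p$ and the boundedness of $S_{l-1}$ yield $T_u v \in B^{\beta}_{p,q}$; symmetrically $T_v u \in B^{\alpha}_{p,q}$. The resonance $R(u,v)$ has blocks with Fourier support in balls of radius $\lesssim 2^l$, and the relevant bound becomes
$$
\|\Delta_j R(u,v)\|_{L^p} \lesssim \sum_{l \ge j - N} 2^{-j(\alpha+\beta)}\, 2^{l(\alpha+\beta)} \|\Delta_l u\|_{L^{p_1}}\|\Delta_l v\|_{L^{p_2}},
$$
and here the hypothesis $\alpha+\beta>0$ is precisely what makes the geometric sum convergent, placing $R(u,v)$ into $B^{\alpha+\beta}_{p,q} \subset B^{\alpha\wedge\beta}_{p,q}$. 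Combining the three contributions gives $uv \in B^{\alpha\wedge\beta}_{p,q}$.

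For part (2), I would define $\langle u,v\rangle := \int uv\,d\xi$ on smooth pairs and then extend by density. By a Parseval-type identity combined with the almost-orthogonality of Fourier blocks, one has $\langle u,v\rangle = \sum_l \int \Delta_l u \cdot \widetilde{\Delta}_l v\,d\xi$ where $\widetilde{\Delta}_l := \Delta_{l-1}+\Delta_l+\Delta_{l+1}$. Applying H\"older in $\xi$ with exponents $p, p'$ and then H\"older in $l$ with exponents $q, q'$ yields
$$
|\langle u,v\rangle| \le \sum_l \bigl(2^{l\alpha}\|\Delta_l u\|_{L^p}\bigr)\bigl(2^{-l\alpha}\|\widetilde{\Delta}_l v\|_{L^{p'}}\bigr) \le \|u\|_{B^\alpha_{p,q}}\|v\|_{B^{-\alpha}_{p',q'}}.
$$
The hardest technical point is not the paraproduct algebra itself but the verification that the Fourier-series Littlewood--Paley decomposition on the bounded interval $(0,1)$ introduced in the appendix enjoys the same support properties and Bernstein inequalities as the standard decomposition on $\mathbb{R}^d$; this is the reason the authors ultimately cite Triebel's monograph, within whose framework the above paraproduct/duality analysis transfers verbatim.
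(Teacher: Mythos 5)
The paper does not actually prove Lemma~\ref{lemma:A2}; it simply cites Chapter~4 of Triebel's monograph and leaves the verification to the reader. Your proposal, by contrast, sketches an actual proof via Bony's paraproduct decomposition $uv = T_u v + T_v u + R(u,v)$ together with H\"older and almost-orthogonality of the blocks $\Delta_l$. This is the standard modern route (it is also essentially how Bahouri--Chemin--Danchin or Triebel would prove these facts), and the outline is correct. One place worth being more careful in part~(1): the estimate of $T_u v$ requires splitting into the cases $\alpha > 0$ and $\alpha \le 0$, since $\|S_{l-1}u\|_{L^{p_1}}$ is uniformly bounded by $\|u\|_{B^\alpha_{p_1,\infty}}$ only when $\alpha > 0$, whereas for $\alpha < 0$ one picks up a factor $2^{-l\alpha}$ and lands in $B^{\alpha+\beta}_{p,q}$ rather than $B^\beta_{p,q}$; combined with the symmetric estimate for $T_v u$ and your estimate for $R(u,v)$, one gets $uv \in B^{\min(\alpha,\beta,\alpha+\beta)}_{p,q}$, which equals $B^{\alpha\wedge\beta}_{p,q}$ precisely because $\alpha+\beta>0$. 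Your closing remark is the right one to make and is, in fact, the crux: the $\Delta_l$ defined in the appendix are sharp Fourier truncations of a periodic function, not the smooth Littlewood--Paley projectors of the Euclidean theory, so one needs to know that these still enjoy the requisite $L^p$-boundedness and Bernstein inequalities (true here because $D=(0,1)$ is one-dimensional, where the sharp projector is controlled by the Hilbert transform, but this would fail in higher dimensions). The paper sidesteps all of this by deferring to Triebel; your sketch makes the mechanism explicit, at the cost of the case analysis above.
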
 
    
    \end{appendices}

\appendix
\renewcommand\thesection{\normalsize Acknowledgements}
\section{}
We acknowledge the financial supports of EPSRC grant ref EP/S005293/2 and Royal Society Newton Fund grant (ref. NIF\textbackslash R1\textbackslash 221003).

\

\bibliographystyle{siam}
\footnotesize

\end{document}